\newcommand{\Z}{{\mathbb Z}}
\newcommand{\isi}{{\mathcal I}}
\newcommand{\isd}{{\mathcal D}}
\newcommand{\Q}{{\mathbb Q}}
\newcommand{\LL}{{\mathcal{L}}}
\newcommand{\R}{{\mathbb R}}
\newcommand{\HH}{\mathcal{H}}
\newcommand{\D}{\mathcal{D}}
\newcommand{\A}{\mathbb{A}}
\newcommand{\PP}{\mathcal{P}}
\newcommand{\RR}{\mathcal{R}}
\newcommand{\qz}[1]{\href{https://www.lmfdb.org/NumberField/3.3.\ifnum#1=7 49\else 81\fi.1}{{\Q(\zeta_{#1})}^+}}
\renewcommand{\P}{\mathbb{P}}
\newcommand\N{{\mathbb N}}
\renewcommand\O{{\mathcal O}}
\newcommand{\Sy}{\mathcal{S}}
\newcommand{\p}{{\mathfrak{p}}}
\newcommand{\frP}{{\mathfrak{P}}}
\newcommand{\fra}{{\mathfrak{a}}}
\newcommand{\dif}{{\mathfrak{d}}}
\newcommand{\rr}{{\mathfrak{r}}}
\newcommand{\otp}{{\O^\times_{K,+}}}
\newcommand{\kpow}[2]{{K_{#1}^{\otimes #2}}}
\DeclareMathOperator{\Cl}{Cl}
\newcommand\disc{D}
\DeclareMathOperator{\Aut}{Aut}
\DeclareMathOperator{\vol}{vol}
\newcommand{\iu}{{{_i}u}}
\newcommand{\iv}{{{_i}v}}
\DeclareMathOperator{\HHH}{H}
\DeclareMathOperator{\hhh}{h}
\DeclareMathOperator{\tr}{tr}
\DeclareMathOperator{\covol}{covol}
\DeclareMathOperator{\GL}{GL}
\DeclareMathOperator{\SL}{SL}
\DeclareMathOperator{\PGL}{PGL}
\DeclareMathOperator{\PSL}{PSL}
\DeclareMathOperator{\coker}{coker}
\DeclareMathOperator{\im}{im}
\newcommand{\nflmfdb}[4]{\href{https://www.lmfdb.org/NumberField/#1.#2.#3.#4}{#1.#2.#3.#4}}
\newcommand{\cf}[1]{\nflmfdb{3}{3}{#1}{1}}
\theoremstyle{plain}
\newtheorem{thm}{Theorem}[section]
\newtheorem{hyp}[thm]{Hypothesis}
\newtheorem{alg}[thm]{Algorithm}
\newtheorem{prop}[thm]{Proposition}
\newtheorem{cor}[thm]{Corollary}
\newtheorem{lem}[thm]{Lemma}
\theoremstyle{definition}
\newtheorem{ex}[thm]{Example}
\newtheorem{notation}[thm]{Notation}
\newtheorem{myremark}[thm]{Remark}
\newtheorem{problem}[thm]{Problem}
\newtheorem{defn}[thm]{Definition}
\newtheorem{question}[thm]{Question}	%\renewcommand{\thequestion}{}
\newenvironment{copythm}[1]{\def\copiedthm{{#1}}\begin{thethmcopy}}{\end{thethmcopy}}
\newtheorem*{thethmcopy}{Theorem \copiedthm}
\newenvironment{copyprop}[1]{\def\copiedprop{{#1}}\begin{thepropcopy}}{\end{thepropcopy}}
\newtheorem*{thepropcopy}{Proposition \copiedprop}
\theoremstyle{remark}
\newtheorem{theorem}{Theorem}[section]
\theoremstyle{definition}
\theoremstyle{remark}
\numberwithin{equation}{section}
\begin{document}

% \title[short text for running head]{full title}
%\contrib[with an appendix by]{Adam Logan and John Voight}
\title{The Kodaira dimension of Hilbert modular threefolds}
%    Only \author and \address are required; other information is
%    optional.  Remove any unused author tags.

%    author one information
% \author[short version for running head]{name for top of paper}
\author{Adam Logan}
\address{The Tutte Institute for Mathematics and Computation,
  P.O. Box 9703, Terminal, Ottawa, ON K1G 3Z4, Canada}
\curraddr{School of Mathematics and Statistics, 4302 Herzberg Laboratories, 1125 Colonel By Drive, Carleton University, Ottawa, ON K1S 5B6, Canada}
\email{adam.m.logan@gmail.com}
\thanks{}

%    The 2020 edition of the Mathematics Subject Classification is
%    the current definitive version.
\subjclass[2020]{Primary 14G35, 11F41, 14E15; Secondary 11Y40, 32S45}

\date{April 17, 2026}

\begin{abstract}
  Following a method introduced by Thomas-Vasquez and developed by Grundman,
  we prove that many Hilbert modular threefolds of geometric
  genus $0$ and $1$ are of general type, and that some are of nonnegative
  Kodaira dimension.  The new ingredient is a detailed study
  of the geometry and combinatorics of totally positive integral elements
  $x$ of a fractional ideal $I$ in a totally real number field $K$ with
  the property that ${\mathop{\rm tr}} xy < \min I {\mathop{\rm tr}} y$
  for some $y \gg 0 \in K$.
\end{abstract}

\maketitle

\section{Introduction}
This paper is about the birational geometry, and in particular the Kodaira
dimension, of Hilbert modular varieties.  We begin by recalling
the definition of these varieties as complex orbifolds, as in
\cite[Chapter 1]{vdg}, \cite[\S2.2]{freitag}.
Let $K$ be a totally real number field of degree $d$.  The standard
formula $\begin{psmallmatrix}a&b\\c&d\end{psmallmatrix} z = \frac{az+b}{cz+d}$
gives an action of $\GL_2^+(\R)$ on the upper half-plane $\HH$.  The
product of the $d$ embeddings $K \hookrightarrow \R$ gives an injection
$\PSL_2(\O_K) \hookrightarrow \GL_2^+(\R)^d$ whose image is discrete and acts
on $\HH^d$ properly discontinuously and with finite stabilizers; thus the
quotient inherits a structure of complex orbifold from $\HH^d$, which is the
most basic Hilbert modular variety.

\begin{myremark}\label{rem:hmv-components}
  It is sometimes important to consider a quotient by
  $\PGL_2^+(\O_K)$, the subgroup of $\GL_2(\O_K)$ consisting of matrices whose
  determinant is a totally positive unit,
  rather than $\PSL_2$.  These groups are the same when the class
  number and narrow class number of $K$ are equal, but if $\varepsilon$ is a
  totally positive unit that is not a square
  then $\begin{psmallmatrix}\varepsilon&0\\0&1\end{psmallmatrix}$
    %the diagonal matrix with entries $\varepsilon,1$
    represents an element of $\PGL_2^+$ not in $\PSL_2$.  Again the quotient
    is a complex orbifold of dimension $d$.
    % need to be careful about GL_+ vs. SL
\end{myremark}

Let $A$ be a fractional ideal of $\O_K$ and consider
$\Aut(\O_K \oplus A)$.  This can be viewed as a group of matrices
$\begin{psmallmatrix}a&b\\c&d\end{psmallmatrix}$, where $a, d \in \O_K$,
$b \in A^{-1}$, $c \in A$ and where the determinant belongs to $\O_K^\times$.
We thus write $\PSL(\O_K \oplus A), \PGL^+(\O_K \oplus A)$ for the subgroups of
$\Aut(\O_K \oplus A)$ for which the determinant is the square of a unit,
respectively a totally positive unit, modulo the scalar matrices.

We state some results from \cite[I.4]{vdg} on these groups.
If $A$ is generated by a totally positive element $x_+ \in K$, then
$\PSL(\O_K \oplus A)$ is conjugate to $\PSL_2$ by 
$\begin{psmallmatrix}x_+&0\\0&1\end{psmallmatrix}$, which identifies the
action of $\PSL_2$ on $\HH^d$ with that of $\PSL(\O_K \oplus A)$.
Similarly, if $A = I^2$, then the two groups are conjugate by a matrix
of determinant $1$ whose two columns belong to $I, I^{-1}$.  On the other
hand, the groups $\PSL(\O_K \oplus A), \PSL(\O_K \oplus B)$ are not conjugate in
$\GL_2^+(K)$ if $AB^{-1}$ is not of the form $TI^2$, where
$T$ is a narrowly principal ideal.  Thus the conjugacy classes of groups of
this form are in bijection with the genera of $K$.  Similarly for
$\PGL^+$.

\begin{defn}\label{def:gamma-0} \cite[(2.1.4)]{hmf-mult}
  Let $I$ be a nonzero ideal of $\O_K$ and let $A$ be a fractional ideal.
  Let $\Gamma_0(I;A), {\hat \Gamma}_0(I;A)$ be the subgroups of
  $\PSL(\O_K \oplus A), \PGL^+(\O_K \oplus A)$ respectively consisting of
  matrices whose lower left entry belongs to $AI$.  These are subgroups of
  finite index, so we again obtain a complex orbifold as the quotient.  This
  parametrizes abelian $d$-folds with a particular kind of action of $\O_K$
  and a distinguished $\O_K$-submodule of the torsion isomorphic to $\O_K/I$.
\end{defn}

\begin{defn}\label{def:hmv}
  Let $K$ be a totally real number field and $\Gamma$ a subgroup of finite index
  of $\PGL_2^+(\O_K)$ as above.  The quotient $\Gamma \backslash \HH^d$ is the
  {\em Hilbert modular variety} for the group $\Gamma$.
\end{defn}

A priori this is only a complex orbifold, but in fact it has an algebraic
structure by \cite[Theorem II.7.1]{vdg}.

\begin{notation}\label{not:hmv}
  The Hilbert modular variety for the group $\Gamma_0(I;A)$, where $I$ is an
  ideal in $\O_K$, will be denoted $H_{K,I;A}$.  When $A$ is $(1)$ we will
  generally omit it from the notation, writing $H_{K,I}$.  Similarly
  if $I = (1)$ we may write $H_{K;A}$, and if both are $(1)$ we simply
  write $H_K$.
\end{notation}

These quotients are not compact; they are compactified by adjoining finitely
many points called cusps.  Since our main concern is with
birational geometry, we will not always distinguish between the open and
compactified varieties, referring to both as Hilbert modular varieties.
The cusps of $H_K$ are in bijection with
$\PSL_2(\O_K) \backslash \P^1(K)$, or equivalently with
ideal classes of $\O_K$.  Giving a resolution of the cusps
is essentially equivalent to decomposing a fundamental domain for the action
of $\otp$, the totally positive units of $\O_K$, on the totally
positive cone in $K \otimes \R$, into cones spanned by bases for ideals.
In the case $[K:\Q] = 2$ this is treated in great detail in 
\cite[Chapter 2]{vdg}; in \cite{tv-rcs} the problem is
discussed briefly for arbitrary degree in Section 1 and studied much
more closely in the rest of the paper.
In contrast to the case
$[K:\Q] = 2$ where there is a canonical way to subdivide a cone in $\R^2$,
there are many different ways to resolve the cusp singularities in
dimension greater than~$2$, none of which is obviously the most natural.
As pointed out in \cite[p.~36]{vdg}, this is related to the lack of a single
minimal model for varieties of dimension greater than $2$; for discussion
of a related situation the reader may wish to consult
\cite[Chapter 14]{matsuki}.

In addition, there are singularities that arise from the isolated fixed points
of elements of $\Gamma$
on $\HH^d$.  For $[K:\Q]$ fixed there are only finitely many such
types and they are much more tractable than the cusps.

\subsection{Previous results}
Recall the definitions of Kodaira dimension $\kappa_V$
and general type \cite[Definition~1-5-2, Proposition~1-5-3]{matsuki}:
\begin{notation} Let $V$ be a smooth variety.  The canonical line bundle
  of $V$ will be denoted $K_V$; since we are not concerned with specific
  divisors $D$ with $\O(D) \cong K_V$, we write its powers as $\kpow{V}{n}$
  rather than $nK_V$.
\end{notation}

\begin{defn} Let $V$ be an irreducible
  variety and $V'$ a smooth variety birationally
  equivalent to $V$ (the choice does not matter).
  If $|\kpow{V'}{n}| = 0$ for all $n>0$, then
  we define $\kappa_V = -\infty$.  If $|\kpow{V'}{n}| \le 1$ for all $n > 0$
  with equality for at least one $n$, then define $\kappa_V = 0$.
  Otherwise, there exist positive integers $k,n$ and positive real numbers
  $c, C$ such that
  $cm^k < \dim |\kpow{V'}{mn}| < Cm^k$ for sufficiently large $m$, and we define
  $\kappa_V = k$.
  If $\kappa_V = \dim V$ then $V$ is {\em of general type}.
\end{defn}

%can omit
%% \begin{myremark} This definition depends on a choice of $V'$, but
%%   $\kappa_V$ does not depend on this choice.  If a variety in
%%   characteristic $p$ is not birational to any smooth variety, its Kodaira
%%   dimension is not defined.
%% \end{myremark}

%can omit one sentence
We now introduce two well-studied families of cubic fields.

\begin{defn}\label{def:krs-kn}
  \cite[\S 3.I (*), (3.2)]{tv-rcs} Let $1 \le r \le s-2$.  Define
  $K_{r,s}$ to be the field $\Q(\alpha_{r,s})$, where $\alpha_{r,s}$ satisfies
  $x(x-r)(x-s)-1 = 0$.  Let $n > 0$.  Define $K_n$ to be $\Q(\alpha_n)$,
  where $\alpha_n$ is a root of $x^3 + (n+1) x^2 + (n-2) x - 1$.
\end{defn}

These fields have easily constructed sets of units of maximal rank:
the $K_n$ are Galois and so $\alpha_n$ and
$\alpha_n^\sigma$ are units where $\sigma \in \Aut(K_n)$, while
$\alpha_{r,s}, \alpha_{r,s}-r \in \O^\times_{K_{r,s}}$.  This implies in some cases
that the singularities of the Hilbert modular varieties associated to them
are relatively tractable: see Remark \ref{rem:restrict-field}.

Kn\"oller proves the following \cite[Satz 1, Satz 2]{knoller}:
\begin{thm}\label{thm:general-type-knoller} \begin{enumerate}
  \item If $\O_{K_n}$ is generated by $\alpha_n$ (this holds if $n^2-n+7$
    is squarefree) and has class number $1$ and $n > 4$, then
    $H_{K_n}$ is of general type.
  \item If $\O_{K_{r,s}}$ is generated by $\alpha_{r,s}$ and has class number $1$
    and $3\zeta_{K_{r,s}}(2) D_{K_{r,s}} > 4\pi^6$, and $r > 1$,
    then $H_{K_{r,s}}$ is of general type.
  \end{enumerate}
\end{thm}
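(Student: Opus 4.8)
The plan is to realize the $n$-pluricanonical forms on a smooth model $\widetilde V$ of $H_{K_n}$ (resp.\ $H_{K_{r,s}}$) as Hilbert modular forms of weight $2n$ subject to explicit vanishing conditions, and then to bound the dimension of the resulting space below by a positive multiple of $n^3$. Since $\dim H_K = 3$ already forces $\dim\lvert\kpow{\widetilde V}{n}\rvert \le C n^3$, a matching lower bound $\dim\lvert\kpow{\widetilde V}{n}\rvert \ge c\,n^3$ with $c>0$ gives $\kappa_{H_K}=3$, i.e.\ general type. The starting point is the classical dictionary (see \cite[Chapter~II]{vdg}, \cite{freitag}): on the open part $\HH^3/\Gamma$ the holomorphic $3$-form $dz_1\wedge dz_2\wedge dz_3$ transforms with the automorphy factor $\prod_i (c_iz_i+d_i)^{-2}$, hence like a modular form of parallel weight $2$. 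Thus a weight-$2n$ form yields an $n$-canonical form on the smooth locus, and the entire problem is to control the cusps and the finitely many isolated quotient singularities.

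First I would fix a resolution of each cusp. Following \cite[Chapter~II]{vdg} and the Thomas--Vasquez method \cite{tv-rcs}, this means choosing an $\otp$-invariant fan subdividing the totally positive cone of $K\otimes\R$ into simplicial cones spanned by bases of the relevant fractional ideal. A weight-$2n$ form with Fourier expansion $\sum_{\nu\gg0} a_\nu\, e^{2\pi i\,\tr(\nu z)}$ extends to an $n$-canonical form across the resolution exactly when $a_\nu=0$ for every exponent $\nu$ lying too close to the boundary of the cone, namely those for which $\tr(\nu w)$ falls below a bound (depending on $n$ and on the discrepancies of the exceptional divisors) for some $w\gg0$. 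This is precisely the configuration isolated in the abstract: the obstructing exponents are the integral totally positive elements $\nu$ of an ideal $I$ with $\tr(\nu y)<\min I\cdot\tr(y)$ for some $y\gg0$.

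Next I would set up the counting inequality
\[
  \dim H^0\!\bigl(\widetilde V,\kpow{\widetilde V}{n}\bigr)\ \ge\ \dim M_{2n}(\Gamma)\ -\ \sum_{\mathrm{cusps}} c_{\mathrm{cusp}}(n)\ -\ \sum_{\mathrm{ell.\ pts}} c_{\mathrm{ell}}(n),
\]
where $M_{2n}(\Gamma)$ is the space of weight-$2n$ forms, $c_{\mathrm{cusp}}(n)$ counts the Fourier coefficients whose vanishing is forced at a cusp, and $c_{\mathrm{ell}}(n)$ is the analogous correction at the elliptic fixed points. The main term comes from Hirzebruch--Mumford proportionality (Shimizu's formula): $\dim M_{2n}(\Gamma)$ grows like a positive constant times $\zeta_K(-1)\,n^3$, whose leading coefficient is controlled by $D_K$ and $\zeta_K(2)$ through the functional equation $\zeta_K(-1)=\tfrac{D_K^{3/2}}{8\pi^6}\zeta_K(2)$ — which is exactly why the hypotheses in part~(2) are phrased through $\zeta_{K_{r,s}}(2)\,D_{K_{r,s}}$ and the bound $4\pi^6$. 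The isolated quotient singularities are canonical by the Reid--Tai criterion (checkable here because the explicit units $\alpha_n,\alpha_n^\sigma$, resp.\ $\alpha_{r,s},\alpha_{r,s}-r$, make the local group actions transparent), so by the extension theory of \cite{freitag} they impose at most a subleading loss, i.e.\ $c_{\mathrm{ell}}(n)=O(n^2)$; this is where class number $1$, monogenicity, and the numeric restrictions $n>4$, $r>1$ keep the local data under control.

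The heart of the argument, and the step I expect to be the main obstacle, is the bound on $c_{\mathrm{cusp}}(n)$. One must show that the set of exponents $\nu$ with $\tr(\nu y)<\min I\cdot\tr(y)$ for some $y\gg0$ is thin, in the sense that the number of such lattice points is $O(n^2)$ rather than $\Theta(n^3)$, since geometrically they form a codimension-one sliver hugging the boundary of the totally positive cone. Establishing this is precisely the detailed geometric and combinatorial study flagged in the abstract, and I would carry it out by using the explicit units to construct an efficient fundamental domain for $\otp$ and to make the cone subdivision sufficiently regular that the boundary count is under control. Granting $c_{\mathrm{cusp}}(n)=O(n^2)$ together with the elliptic bound, the displayed inequality yields $\dim H^0(\widetilde V,\kpow{\widetilde V}{n})\ge c\,n^3-C n^2$, which is positive and of cubic growth for large $n$; hence $\kappa_{H_K}=3=\dim H_K$ and the variety is of general type.
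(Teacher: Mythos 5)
First, a point of orientation: the paper does not prove this statement itself; it is quoted from Kn\"oller \cite[Satz 1, Satz 2]{knoller}, and the method behind it is the one the paper recapitulates in \S\ref{subsec:general-type} and Section~\ref{sec:defects}. Your general framework---identify $n$-canonical forms with weight-$2n$ modular forms, subtract cusp and elliptic corrections from $\dim M_{2n}$ as in \eqref{eqn:lower-bound}, and feed in Shimizu's asymptotic $\dim M_{2n} \sim -2\zeta_K(-1)\,n^3$---is exactly the right one, and your use of the functional equation to explain why the hypothesis is phrased through $\zeta_{K_{r,s}}(2)D_{K_{r,s}}$ is on target.

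The gap is your central claim that the cusp correction is $O(n^2)$. It is not: by Kn\"oller's formula (Theorem~\ref{thm:knoller-defect}) the $q$th defect of a cusp of type $(M,V)$ is $\#\bigl(\Lambda_q(M)/V\bigr)$, and this quotient is counted by the lattice points of a \emph{full-dimensional} rational polytope scaled by $q$ (Lemmas~\ref{lem:scale-polytope} and~\ref{lem:lattice-points-match}), so it grows like $c\,q^3$ with $c>0$ a volume. Your ``codimension-one sliver'' picture describes $\Lambda_q(M)$ before dividing by the unit action; after the quotient the growth is genuinely cubic. Concretely, for $K=\qz7$ the defect constant is $5/12$ while $-2\zeta_K(-1)=2/21$ (Table~\ref{tab:cubic-not-gt}), so the defect overwhelms the main term---and indeed $H_{\qz7}$ is unirational (Theorem~\ref{thm:first-two-unirational}). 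If your $O(n^2)$ bound held, every Hilbert modular threefold would be of general type and none of the numerical hypotheses in the statement would be needed, which is a red flag. The actual proof must compute the cubic constant $c$ of the cusp defect---feasible for $K_n$ and $K_{r,s}$ because monogenicity and the explicit units guarantee that every class in $\Lambda_q(\O_K)/\otp$ is represented by an element of trace less than $q$ (no reducers, in the paper's language; see Remark~\ref{rem:restrict-field}), so the count reduces to a single explicit polytope---and then verify $-2\zeta_K(-1)>c$. The hypotheses $n>4$, $r>1$, and $3\zeta_{K_{r,s}}(2)D_{K_{r,s}}>4\pi^6$ exist precisely to force this inequality of leading cubic coefficients, not to ``keep the local data under control''; the elliptic points contribute nothing here by Remark~\ref{rem:cqs-2-3-canonical}.
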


Every $H_K$ that Kn\"oller proves to be of general type satisfies
$p_g(H_K) > 1$.  Grundman proved the following
statement:
\begin{thm} \cite[Theorem 3]{gr-dcs}, \cite[Theorem 3]{gr-ds}
  Let $K \in \{K_{1,7}, K_{2,5}, K_{3,5}\}$.   Then
  $H_K$ is of geometric genus $0$ and positive Kodaira dimension.
\end{thm}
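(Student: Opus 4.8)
The plan is to compute, for each of the three fields $K \in \{K_{1,7}, K_{2,5}, K_{3,5}\}$, enough of the canonical ring of a smooth model of $H_K$ to pin down the Kodaira dimension exactly. The geometric genus $0$ claim amounts to $\dim |K_V| = 0$ (no holomorphic $3$-forms), so the content is in locating pluricanonical forms of higher weight. I would follow the Thomas--Vasquez/Grundman strategy: realize pluricanonical forms of weight $n$ on the smooth compactification as Hilbert modular forms of weight $2n$ for $\Gamma$ that vanish to prescribed order along the exceptional divisors of both the cusp resolution and the resolution of the isolated quotient singularities. The Hirzebruch--Mumford proportionality principle gives the leading term of $\dim M_{2n}(\Gamma)$ as a polynomial in $n$ of degree $d=3$ with leading coefficient proportional to $\vol(\Gamma\backslash\HH^3)$, i.e. to $\zeta_K(-1)$ up to an explicit rational factor. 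To convert forms into \emph{pluricanonical} forms I subtract off the codimension-one correction terms coming from the boundary.

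\medskip

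First I would fix, for each $K$, an explicit resolution of the cusp singularities: since these fields have units of maximal rank that are easy to write down (the remark after Definition~\ref{def:krs-kn}), one can produce an explicit fundamental domain for the action of $\otp$ on the totally positive cone and subdivide it into unimodular cones, as in \cite{tv-rcs}. Each cone contributes exceptional divisors to the resolution, and the discrepancy data (how much a weight-$2n$ form must vanish along each such divisor to descend to a genuine $n$-canonical form) is governed by the combinatorial/arithmetic quantity highlighted in the abstract: the totally positive integral $x \in I$ with $\tr xy < \min I \cdot \tr y$ for some $y \gg 0$. Second, I would handle the finitely many isolated quotient singularities coming from the elliptic fixed points of $\Gamma$ on $\HH^3$; in degree $3$ these are cyclic quotient singularities of known type, and their contribution to the difference between $\dim M_{2n}(\Gamma)$ and $\dim |\kpow{V}{n}|$ is a lower-order (bounded, or at worst $O(n^2)$) term that does not affect the leading coefficient. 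Third, I would assemble these ingredients into a lower bound of the form $\dim |\kpow{V}{n}| \ge a n^3 - bn^2 - c$ with $a>0$, which forces $\kappa_V = 3$ — but the theorem only claims positive Kodaira dimension, so in fact it suffices to exhibit a single $n$ and a single nonzero $n$-canonical form, which is a finite computation once the vanishing conditions along the boundary are made explicit.

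\medskip

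For the geometric genus statement I would argue that $\dim M_2(\Gamma)$ (weight-$2$ forms, corresponding to $3$-forms) is small and that every such form fails the boundary-vanishing conditions needed to extend holomorphically across the cusp resolution, so $p_g = h^0(\kpow{V}{1}) = 0$; this is the kind of low-weight vanishing that Grundman establishes directly. The main obstacle is the cusp contribution: precisely controlling, for each of the three explicit fields, the order of vanishing that a modular form must have along every exceptional divisor of a chosen resolution, and checking that this vanishing is \emph{attainable} by an actual form of small weight rather than merely permitted by dimension count. This is exactly where the detailed study of totally positive $x \in I$ with $\tr xy < \min I \cdot \tr y$ enters, since that inequality is the obstruction to a monomial $q$-expansion term extending across the boundary; I expect the bulk of the work to be a careful, field-by-field analysis of these inequalities together with an explicit choice of cone decomposition that makes the vanishing conditions simultaneously satisfiable, yielding a nonzero pluricanonical form and hence $\kappa_{H_K} \ge 0$, indeed $> 0$.
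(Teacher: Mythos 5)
Your overall strategy is the right one, and it is in fact the strategy of the result being quoted: this theorem is cited from Grundman (and the paper does not reprove it, though Example~\ref{ex:gr-761} and Remark~\ref{rem:gr-985} revisit two of the three fields). The engine is the inequality (\ref{eqn:lower-bound}), $\hhh^0(\kpow{V}{q}) \ge \dim M_{2q} - \sum_P \delta_P(q)$, with the cusp defects computed via Kn\"oller's combinatorial formula $\delta_{(M,V)}(q) = \#(\Lambda_q(M)/V)$ (Theorem~\ref{thm:knoller-defect}); for these three fields the special unit structure means there are no reducers, so the count reduces to elements of trace less than $q$ and an explicit cone decomposition as in Thomas--Vasquez. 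Your description of the boundary conditions in terms of vanishing along exceptional divisors of a chosen resolution is a heavier formulation of the same computation, but it is not wrong.

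There are, however, two concrete gaps. First, your step ``assemble these ingredients into a lower bound $a n^3 - bn^2 - c$ with $a>0$, which forces $\kappa_V=3$'' fails for $K_{1,7}$ (the field of discriminant $697$): there $-2\zeta_K(-1) = 16/3$ while the cusp-defect constant is $67/12 > 16/3$ (Table~\ref{tab:cubic-pg-0}), so the leading coefficient of your lower bound is \emph{negative} and the asymptotic argument proves nothing. For that field one must compute $\dim M_{2q}$ and the exact defects at specific small $q$ and win on lower-order terms, exactly as the paper does for discriminants $404$, $469$, $229$ in Propositions~\ref{prop:disc-469} and~\ref{prop:kappa-ge-0-for-229}; this is why $697$ survives only with ``positive Kodaira dimension'' rather than ``general type'' in Theorem~\ref{thm:general-type-from-473}. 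Second, your fallback ``it suffices to exhibit a single $n$ and a single nonzero $n$-canonical form'' only yields $\kappa \ge 0$, not $\kappa > 0$: by the paper's definition of Kodaira dimension you need $\hhh^0(\kpow{V}{n}) \ge 2$ for some $n$ to exclude $\kappa = 0$. The paper is careful about exactly this distinction (compare the two cases of Proposition~\ref{prop:disc-469}), so for $K_{1,7}$ you must produce at least a two-dimensional space of sections of some $\kpow{V}{n}$, i.e.\ verify $\dim M_{2n} - \sum_P \delta_P(n) \ge 2$ at some explicit $n$. The geometric genus claim is fine: $p_g = \dim S_2$ by Theorem~\ref{thm:can-div-hmv}, and this is a direct dimension-formula evaluation rather than an extension obstruction.
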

This was surprising because a Hilbert modular surface for level $1$ is
rational if and only if its geometric genus is $0$ \cite[Proposition VII.6.1]{vdg}.
\subsection{Summary of results}\label{subsec:summary}
We now state our main result.

\begin{copythm}{\ref{thm:general-type-from-473}, \ref{thm:general-type-nonprincipal-except-229}}
\begin{enumerate}
\item {\em Let $K$ be a cubic field with discriminant $\ge 473$ such the
  geometric genus of $H_K$ is $0$ or $1$.
  Then $H_{K}$ is of general type, unless possibly $K \cong \Q[x]/(x^3-7x-5)$
  is the cubic field with LMFDB label \cf{697} or
  $K \cong \Q[x]/(x^3-x^2-7x-3)$ is the cubic field with LMFDB label \cf{788}.}
\item {\em Let $K$ be a cubic field such that the
  geometric genus of $H_K$ is $0$ or $1$.  Let $A$ be an ideal
  that is not narrowly principal (this implies that $h_K^+ = 2$).
  Then $H_{K;A}$ is of general type, except
  possibly for the field \cf{229} defined by $x^3-4x-1$.}
\end{enumerate}
\end{copythm}

%% \smallskip\noindent {\bf Theorem \ref{thm:general-type-from-473}, \ref{thm:general-type-nonprincipal-except-229}.}
%% \begin{enumerate}
%% \item {\em Let $K$ be a cubic field with discriminant $\ge 473$ such the
%%   geometric genus of $H_K$ is $0$ or $1$.
%%   Then $H_{K}$ is of general type, unless possibly $K \cong \Q[x]/(x^3-7x-5)$
%%   is the cubic field  with LMFDB label \cf{697}.}
%% \item {\em Let $K$ be a cubic field such that the
%%   geometric genus of $H_K$ is $0$ or $1$.  Let $A$ be an ideal
%%   that is not narrowly principal (this implies that $h_K^+ = 2$).
%%   Then $H_{K;A}$ is of general type, except
%%   possibly for the field \cf{229} defined by $x^3-4x-1$.}
%% \end{enumerate}
%% \smallskip

We are able to prove something about four more fields:

\begin{copyprop}{\ref{prop:disc-469}, \ref{prop:kappa-ge-0-for-229}}
\begin{enumerate}
  \item {\em Let $K_1$ be the cubic field $\cf{404}=\Q(\alpha)$, where $\alpha$
    is a root of $x^3 - x^2 - 5x - 1$.  Then $\kappa_{H_{K_1}} \ge 0$.}
  \item {\em Let $K_2$ be the cubic field $\cf{469}=\Q(\alpha)$, where $\alpha$
    is a root of $x^3 - x^2 - 5x + 4$.  Then $\kappa_{H_{K_2}} > 0$.}
  \item {\em Let $K_3$ be the cubic field $\cf{788}=\Q(\alpha)$, where $\alpha$
    is a root of $x^3 - x^2 - 7x - 3$.  Then $\kappa_{H_{K_3}} > 0$.}
  \item {\em Let $K_4$ be the cubic field $\cf{229}=\Q(\alpha)$, where $\alpha$
    is a root of $x^3 - 4x - 1$, and let $A$ represent the nonprincipal genus.
    Then $\kappa_{H_{K_4;A}} \ge 0$.}
\end{enumerate}
\end{copyprop}

%% \smallskip\noindent {\bf Proposition \ref{prop:disc-469}, \ref{prop:kappa-ge-0-for-229}}
%% \begin{enumerate}
%%   \item {\em Let $K_1$ be the cubic field $\cf{404}=\Q(\alpha)$, where $\alpha$
%%     is a root of $x^3 - x^2 - 5x - 1$.  Then $\kappa_{H_K} \ge 0$.}
%%   \item {\em Let $K_2$ be the cubic field $\cf{469}=\Q(\alpha)$, where $\alpha$
%%     is a root of $x^3 - x^2 - 5x + 4$.  Then $\kappa_{H_K} > 0$.}
%%   \item {\em Let $K_3$ be the cubic field $\cf{229}=\Q(\alpha)$, where $\alpha$
%%     is a root of $x^3 - 4x - 1$, and let $A$ represent the nonprincipal genus.
%%     Then $\kappa_{H_{K;A}} \ge 0$.}

%% \end{enumerate}
%% % also 257 nonprincipal genus
%% \smallskip
  
\begin{myremark}\label{rem:geometric-genus}
  It would seem more natural to
  study the $H_{K;A}$ with $p_g \le 1$ rather than the
  $H_{K;A}$ for which $p_g(H_K) \le 1$.  In Appendix~\ref{app:same-dimension}
  we will show that when $K$ is a cubic field the dimensions of the spaces
  of modular forms for all of the $\Gamma_0(I;A)$ are equal.  Since the
  number of cusps is the same as well, the geometric genera are equal, as
  we will see in Remark~\ref{rem:defects-small}.
  So these conditions are equivalent.
  %% For a single field there is
  %% no difficulty in doing so, but we do not know how to obtain
  %% a reasonable bound on the discriminant of a field $K$ for which there
  %% exists a component of $H_K$ with $p_g \le 1$.
  %%% see John's email
\end{myremark}

\begin{myremark}\label{rem:why-probably-enough}
  Our method determines two
  constants $c_1(K) = -2\zeta_K(-1), c_2(K)$ associated to a field such that if
  $c_1(K) > c_2(K)$ then $H_K$ is of general type.  Both of these grow
  without extreme oscillations
  (see Tables \ref{tab:cubic-pg-0},~\ref{tab:cubic-pg-1}), and for
  the cubic field \cf{1937} of maximal discriminant for
  which $p_g(H_K) \le 1$ we already have $c_1(K) = 28, c_2(K) = 397/30$.
  Therefore we believe that if $p_g(H_K) > 1$ (which already implies that
  $\kappa_{H_K} > 0$) then all $H_{K;A}$ are of general type, and that this can
  be verified for all such $K$ by the methods of this paper.
  In order to prove this it would be necessary to verify that $c_1 > c_2$
  for all cubic fields violating
  the inequality $\frac{D_K \zeta_K(2)}{hR} \ge 2^{3-2}\pi^3$ of
  \cite[Corollary 10]{gr-cl} and all choices of $A$.
  The amount of computer time required is substantial but not prohibitive,
  especially since it appears that this inequality fails for only $421$
  cubic fields.  See Section~\ref{sec:pg-greater-than-1} for more details.
\end{myremark}

Previous work on the Kodaira dimension of Hilbert modular varieties
of dimension greater than $2$ has
largely been restricted to level $1$, perhaps because it is easier to
compute invariants under this assumption and because
$\kappa_{H_{K,I;A}} \ge \kappa_{H_{K;A}}$ (in characteristic $0$
this is a general statement about dominant maps).
In Propositions \ref{cor:pg-01-prime-table},~\ref{cor:pg-01-table}
we classify the $H_{K,I}$ for which
the geometric genus $p_g$ is at most $1$.
We would like to prove results analogous to those of \cite{hamahata},
studying the Kodaira dimensions of the $H_{K,I}$ of geometric genus at most $1$.
We can show that many of the $H_{K,I}$ are of general
type even when we cannot prove this for $H_K$.
Here is a sample of our results.  We use $\p_q$ to refer to a
prime of norm $q$; it does not matter which one we
choose because of the Galois automorphism.  The full result is presented in
Propositions~\ref{cor:pg-01-prime-table} to \ref{cor:mostly-gt-not-prime}.
See also Tables \ref{table:pg-01-prime},~\ref{table:pg-01-ngt}.

\begin{copythm}{\ref{thm:pg-01-disc-49}}
Let $K = \qz7$.  The Hilbert modular variety $H_{K,I}$ is
  of geometric genus $0$ if and only if
  $I \in \{(1), \p_7, (2), \p_{13}, \p_{29}, \p_{43}\}$, and $1$ if and only if
  $I \in \{(3), \p_{41}, \p_7^2, 2\p_7, (4), \p_{71}, \p_{7}\p_{13}, \p_{97}, \p_{113}, \p_{127}, \p_{13}^2\}$.
\end{copythm}

%% \smallskip\noindent {\bf Theorem \ref{thm:pg-01-disc-49}}
%% {\em Let $K = \qz7$.  The Hilbert modular variety $H_{K,I}$ is
%%   of geometric genus $0$ if and only if
%%   $I \in \{(1), \p_7, (2), \p_{13}, \p_{29}, \p_{43}\}$, and $1$ if and only if
%%   $I \in \{(3), \p_{41}, \p_7^2, 2\p_7, (4), \p_{71}, \p_{7}\p_{13}, \p_{97}, \p_{113}, \p_{127}, \p_{13}^2\}$.}
%% \smallskip

\begin{copyprop}{\ref{prop:mostly-gt}, \ref{prop:pos-kod-dim}}
All of these varieties are of general type, with the possible
  exceptions of $I \in \{(1),\p_7,(2)\}$.  We have $\kappa_{H_{K,(2)}} > 0$.
\end{copyprop}

%% \noindent {\bf Proposition \ref{prop:mostly-gt}, Proposition \ref{prop:pos-kod-dim}}
%% {\em All of these varieties are of general type, with the possible
%%   exceptions of $I \in \{(1),\p_7,(2)\}$.  We have $\kappa_{H_{K,(2)}} > 0$.}
%% \smallskip

\begin{myremark} We would like to study the varieties not shown to be of
  general type and understand their geometry explicitly as in \cite{logan}.
  However, this seems to be a very difficult problem, and
  we have no new results in this direction.
\end{myremark}
%% \begin{prop}\label{prop:pg-148} (cf.~Corollaries \ref{cor:pg-01-prime-table}, \ref{cor:pg-01-table}) %%% add reference to Kodaira dimensions
%%   Let $K$ be the real cubic field of discriminant $148$.  Then 
%%   $p_g(H_{K,I}) = 0$ if and only if
%%   $I \in \{(1),\p_2,\p_2^2, \p_5, \p_2 \p_5, \p_{13}\}$, and
%%   $p_g = 1$ if and only if $I \in \{(2),\p_{17}, \p_2^2 \p_5, \p_{25}\}$.
%%   In all cases except possibly the powers of $\p_2$ we have
%%   $\kappa_{H_{K,I}} > 0$. %%% recheck p_2^2 and p_2^3
%% \end{prop}

% first few paragraphs can go
%\subsection{Proving that moduli spaces are of general type}\label{subsec:general-type}
\subsection{Method for studying the Kodaira dimension of Hilbert modular threefolds}\label{subsec:general-type}

In order to prove that Hilbert modular threefolds are of general type, we
will show that powers of the canonical bundle have many sections.  Here
we describe these sections as Hilbert modular forms; later, in
Section~\ref{sec:defects}, we explain which ones belong to $H^0(nK)$, and
in Sections~\ref{sec:algorithms}--\ref{sec:defect-algs} we
will give methods for exact and asymptotic calculation of the dimensions.
In Sections \ref{sec:results-level-1},~\ref{sec:results-higher-level} we present
the results.  The statements of this section and Section~\ref{sec:defects} are
not new, going back to \cite{knoller}.  However, results like those stated in
Section~\ref{subsec:summary} and proved in 
subsequent sections have previously appeared only in special cases.

\begin{notation}\label{not:embeddings}
  Fix an ordering of the $d$ real embeddings $\pi_i: K \hookrightarrow \R$.
  For $x \in K$ and $1 \le i \le d$, let $x_i = \pi_i(x)$.
  For $z \in \HH^d$, let $z_i$ be the $i$th component.
\end{notation}

\begin{defn} \cite[Definition~I.6.1]{vdg}
  Let $k$ be a nonnegative even integer.
  A {\em Hilbert modular form} of (parallel) weight $2k$
  for a group $\Gamma$ on an open subset $U \subseteq \HH^d$
  is a holomorphic function satisfying
  \begin{equation*}
    f(\gamma z) = \left(\prod_{i=1}^d (c_i z_i + d_i)^{2k} (\det \gamma)_i^{-k}\right) f(z)
  \end{equation*}
  for all pairs $(\gamma,z)$ with $\gamma \in \Gamma$ and
  $z, \gamma z \in U$.
  We denote the space of Hilbert modular forms of weight $2k$ by
  $M_{2k}(U;\Gamma)$, omitting $U$ or $\Gamma$ when these are clear or
  unimportant.
\end{defn}

\begin{myremark} The books \cite{freitag,vdg} are invaluable general references
  for Hilbert modular forms and (for the second) Hilbert modular varieties.
  Magma \cite{magma} offers built-in functions for computing dimensions of
  spaces of Hilbert modular forms and coefficients of their Fourier expansions,
  though it is limited to certain levels for fields of odd degree.
  The article \cite{hmf-mult} gives a more advanced introduction to some
  computational issues for Hilbert modular forms and varieties,
  especially in degree $2$, and to the use of the more recent software
  \cite{hmf} that performs a wide range of essential computations for
  Hilbert modular forms over quadratic fields and Hilbert modular surfaces.
\end{myremark}

\begin{myremark}\label{rem:hmf-weight-2k-can-k}
  An easy calculation \cite[Lemma III.3.1]{vdg} shows that if $f$ is a Hilbert
  modular form of weight $2k$ then $f(z_1,\dots,z_n) (dz_1\,\dots\,dz_n)^{\otimes k}$
  is invariant under $\Gamma$.
  In particular, if $\Gamma \backslash U$ is nonsingular then
  $\HHH^0(\Gamma \backslash U,\kpow{U}{k})$ is naturally identified with 
  $M_{2k}(U;\Gamma)$.  However, this does not imply that
  the space $\HHH^0(H_K, \kpow{U}{k})$ of modular forms of weight $k$
  can be identified with $M_{2k}(\Gamma)$, because there are additional
  conditions at the cusps and elliptic points for a modular form to give
  a section of $\kpow{U}{k}$.  Rather, as in the classical case,
  the space of modular forms of weight $2k$ is identified with
  $\HHH^0(H_K,(K(\textup{log cusps}))^{\otimes k})$, and
  Proj of the ring of modular forms is the {\em Baily-Borel compactification}
  of $H_K$ \cite[Theorem II.7.1]{vdg}.  In particular $H_K$ is always of log
  general type.  
\end{myremark}

We recall some facts about the rate of growth of the dimension of $M_{2k}$.
Although an exact formula for the dimension requires a detailed examination
of the elliptic points (Definition~\ref{def:elliptic-pt}), the asymptotic
formula does not require this.  The starting point is a result of Siegel:

\begin{thm}\cite[Theorem IV.1.1]{vdg}\label{thm:fd-volume}
  Relative to the standard hyperbolic metric on $\HH^d$, the volume of
  $\PSL_2(\O_K)\backslash \HH^d$ is equal to $2\zeta_k(-1)$.
\end{thm}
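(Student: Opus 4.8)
The plan is to express $\vol(\PSL_2(\O_K)\backslash\HH^d)$ as a special value of the Dedekind zeta function $\zeta_K$ by studying the nonholomorphic Eisenstein series attached to the cusp at infinity. Write $N(y)=\prod_{i=1}^d y_i$ for the norm of the imaginary part, let $\Gamma_\infty\subset\Gamma=\PSL_2(\O_K)$ be the stabiliser of the cusp $\infty$ (generated by the translations $z\mapsto z+b$, $b\in\O_K$, and the scalings by squares of units in $\otp$), and set
\[
  E_\infty(z,s)=\sum_{\gamma\in\Gamma_\infty\backslash\Gamma} N\bigl(\operatorname{Im}(\gamma z)\bigr)^s ,
\]
which converges for $\operatorname{Re}(s)>1$ and is $\Gamma$-invariant. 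The whole argument rests on two inputs: the explicit shape of the constant term of $E_\infty$, and the identification of its residue at the edge of convergence with the reciprocal of the volume.

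First I would compute the constant term $a_0(y,s)$, the average of $E_\infty(x+iy,s)$ over the compact torus $\O_K\backslash\R^d$ (of covolume $\sqrt{D_K}$). Parametrising the nonidentity cosets by their bottom rows $(c,d)$ and unfolding the sum over $d\in\O_K$ against the torus, each real factor contributes the Beta integral $\int_\R (t^2+y_i^2)^{-s}\,dt=\sqrt\pi\,\Gamma(s-\tfrac12)\Gamma(s)^{-1}y_i^{1-2s}$, while organising the remaining sum over $c$ according to the integral ideal it generates produces the ratio $\zeta_K(2s-1)/\zeta_K(2s)$. The upshot is that $a_0(y,s)=N(y)^s+\varphi(s)\,N(y)^{1-s}$, where $\varphi(s)$ is an explicit product of powers of $\pi$ and $D_K$, a quotient of Gamma factors raised to the $d$-th power, and the ratio $\zeta_K(2s-1)/\zeta_K(2s)$; the nonconstant Fourier coefficients are $K$-Bessel functions and are holomorphic near $s=1$.

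Next I would invoke the (rational-rank-one) spectral theory of Eisenstein series: $E_\infty(z,s)$ continues meromorphically, its only pole in $\operatorname{Re}(s)\ge\tfrac12$ is a simple pole at $s=1$, and the residue there is the constant function equal to $1/\vol(\Gamma\backslash\HH^d)$. Since $N(y)^s$ is regular at $s=1$ and $N(y)^{1-s}\to 1$, this residue equals $\operatorname{res}_{s=1}\varphi(s)$, which the class number formula for $\operatorname{res}_{w=1}\zeta_K(w)$ evaluates explicitly in terms of $h_K$, $R_K$, $w_K$ and $\sqrt{D_K}$. Equating the two expressions gives $\vol$ as an elementary expression in $\zeta_K(2)$, $\pi$, $D_K$, $h_K$ and $R_K$. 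Finally the functional equation of $\zeta_K$, relating $\zeta_K(2)$ to $\zeta_K(-1)$ through the factor $(-2)^d\pi^{2d}D_K^{-3/2}$, rewrites this as a constant multiple of $\zeta_K(-1)$; comparison with the base case $K=\Q$, where the hyperbolic area of $\PSL_2(\Z)\backslash\HH$ is $\pi/3=-2\pi\cdot 2\zeta(-1)$, fixes the constant and, in the normalisation of the metric used in \cite{vdg}, yields exactly $2\zeta_K(-1)$.

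The main obstacle is the constant-term computation together with the bookkeeping of normalising constants. Two points demand care. When $h_K>1$ there are several inequivalent cusps, so $E_\infty$ is one entry of a vector-valued Eisenstein series whose constant term is governed by a scattering matrix, and one must verify that the residue at $s=1$ is still $1/\vol$ independently of the chosen cusp. More seriously, the naive product metric $\prod_i y_i^{-2}\,dx_i\,dy_i$ produces a Riemannian volume that genuinely carries the arithmetic factors $D_K$, $h_K$ and $R_K$, so the clean value $2\zeta_K(-1)$ cannot arise from any field-independent rescaling; it reflects instead the proportionality (Gauss--Bonnet/Hirzebruch) normalisation implicit in the statement, which is the form in which the quantity controls the leading growth $\dim M_{2k}\sim 2\zeta_K(-1)\,k^d$ exploited later. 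Tracking this normalisation honestly is the crux. Siegel's original route replaces the spectral input by a direct residue computation for an Epstein-type zeta integral over the fundamental domain, but it meets precisely the same delicate analysis at the cusp.
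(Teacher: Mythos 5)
The paper does not actually prove this statement: it is Siegel's theorem, quoted verbatim from van der Geer, so any argument you give is necessarily "a different route from the paper." Your Eisenstein-series strategy is a legitimate one (indeed close in spirit to Siegel's original Epstein-zeta computation), and your endgame numerology is right: the functional equation does give $\zeta_K(2)=(-2)^d\pi^{2d}D_K^{-3/2}\zeta_K(-1)$, the check $\pi/3=-2\pi\cdot 2\zeta(-1)$ is correct, and the statement is indeed only true for the Euler--Poincar\'e normalization $\left(\tfrac{-1}{2\pi}\right)^d\prod_i y_i^{-2}\,dx_i\,dy_i$ of the "standard" measure, under which Siegel's formula $\vol=2D_K^{3/2}\zeta_K(2)\pi^{-d}$ becomes $2\zeta_K(-1)$.

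There are, however, genuine gaps in the middle of the argument. First, for $d>1$ the residue of $E_\infty(z,s)$ at $s=1$ is not $1/\vol(\Gamma\backslash\HH^d)$ but $\kappa_\infty/\vol(\Gamma\backslash\HH^d)$, where $\kappa_\infty$ is the volume of the cusp cross-section $\Gamma_\infty\backslash(\R^d\times\{N(y)=1\})$; this carries a factor $\sqrt{D_K}$ (the covolume of $\O_K$) and a regulator of the unit group acting at the cusp. These are precisely the factors needed to cancel the $2^{d-1}h_KR_K/\sqrt{D_K}$ coming from $\operatorname{res}_{s=1}\zeta_K(2s-1)$ via the class number formula, and without computing $\kappa_\infty$ the cancellation cannot be exhibited. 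Second, when $h_K>1$ the unfolding at the cusp $\infty$ yields a partial zeta function attached to the principal class, not $\zeta_K(2s-1)/\zeta_K(2s)$; one must sum the scattering contributions over all $h_K$ cusps to reassemble $\zeta_K$ and remove the class number. Third, your proposed escape hatch --- that the naive hyperbolic volume "genuinely carries $D_K$, $h_K$ and $R_K$" and that the discrepancy is absorbed by the metric normalization --- cannot work: Siegel's volume $2D_K^{3/2}\zeta_K(2)\pi^{-d}$ contains neither $h_K$ nor $R_K$, and a normalization of the metric is a field-independent constant, so it cannot absorb field-dependent factors. For the same reason, "fix the constant by comparing with $K=\Q$" cannot close the argument: the quantity you would be fixing is not a single universal constant until the $\sqrt{D_K}$, $R_K$ and $h_K$ bookkeeping has been carried out correctly.
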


Specializing \cite[Theorem 11]{shimizu}, Thomas and Vasquez give a formula
for the dimension of the space of modular forms for $\PSL_2(\O_K)$
and its torsion-free finite-index subgroups.
The following well-known result follows from this:

\begin{prop}\label{prop:asymptotic-dim-hmf}
  Let $\Gamma$ be a group commensurable with $\PSL_2(\O_K)$, where $K$ is
  a totally real field of degree $d$.  Then the
  dimension of the space of cusp forms of weight $2k$ is asymptotically
  equal to $2\zeta_K(-1)(-k)^d [\PSL_2(\O_K):\Gamma]$.
\end{prop}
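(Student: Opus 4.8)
The plan is to reduce to the exact dimension formula underlying the statement (Shimizu's Theorem~11 in the form specialized by Thomas and Vasquez \cite{shimizu,tv-rcs}) and to read off its leading term in $k$, the point being that only the volume term contributes at order $k^d$.

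First I would pass to a torsion-free subgroup. Because $\Gamma$ is commensurable with $\PSL_2(\O_K)$, the group $\Gamma_0=\Gamma\cap\PSL_2(\O_K)$ has finite index in each; by Selberg's lemma it contains a torsion-free subgroup of finite index, and replacing it by its normal core I may choose $\Gamma'\trianglelefteq\Gamma$ torsion-free of finite index. For such $\Gamma'$ the quotient $X'=\Gamma'\backslash\HH^d$ is a complex manifold, and by Remark~\ref{rem:hmf-weight-2k-can-k} the space of weight-$2k$ cusp forms is identified with $\HHH^0(\tilde X',\kpow{\tilde X'}{k})$ on a smooth toroidal compactification $\tilde X'$, the cusp condition being precisely that a section of $\kpow{U}{k}$ extend across the boundary divisor.

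Next I would extract the leading term for $\Gamma'$. The cited formula writes $\dim S_{2k}(\Gamma')$ as the sum of a principal (volume) term, a bounded contribution for each elliptic point, and a contribution from each cusp of degree at most $d-1$ in $k$; equivalently this is asymptotic Riemann--Roch, $\dim\HHH^0(\tilde X',\kpow{\tilde X'}{k})=\frac{1}{d!}(K_{\tilde X'})^d k^d+O(k^{d-1})$, with Kodaira--Serre vanishing killing the higher cohomology for $k\gg0$ and Hirzebruch--Mumford proportionality evaluating $\frac{1}{d!}(K_{\tilde X'})^d=(-1)^d\vol(\Gamma'\backslash\HH^d)$. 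Either way,
\[
  \dim S_{2k}(\Gamma')=(-1)^d\vol(\Gamma'\backslash\HH^d)\,k^d+O(k^{d-1}).
\]
Siegel's theorem (Theorem~\ref{thm:fd-volume}) gives $\vol(\PSL_2(\O_K)\backslash\HH^d)=2\zeta_K(-1)$, and covolume is multiplicative in the index, so $\vol(\Gamma'\backslash\HH^d)=[\PSL_2(\O_K):\Gamma']\cdot 2\zeta_K(-1)$; this yields the claim for $\Gamma'$, with leading term $2\zeta_K(-1)(-k)^d[\PSL_2(\O_K):\Gamma']$. As a check, $\zeta_K(-1)$ has sign $(-1)^d$, so this leading coefficient is positive.

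Finally I would descend from $\Gamma'$ to $\Gamma$. Since $\Gamma'\trianglelefteq\Gamma$, one has $S_{2k}(\Gamma)=S_{2k}(\Gamma')^{\Gamma/\Gamma'}$, and averaging over $G=\Gamma/\Gamma'$ gives $\dim S_{2k}(\Gamma)=\frac{1}{|G|}\sum_{g\in G}\tr(g\mid S_{2k}(\Gamma'))$. The identity term contributes $\frac{1}{|G|}\dim S_{2k}(\Gamma')$, while the trace of each $g\neq 1$ is governed by its fixed-point set on $\HH^d$ and is $O(k^{d-1})$; hence $\dim S_{2k}(\Gamma)\sim\frac{1}{[\Gamma:\Gamma']}\dim S_{2k}(\Gamma')$. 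Writing the generalized index $[\PSL_2(\O_K):\Gamma]=[\PSL_2(\O_K):\Gamma']/[\Gamma:\Gamma']=\vol(\Gamma\backslash\HH^d)/\vol(\PSL_2(\O_K)\backslash\HH^d)$ then gives exactly $2\zeta_K(-1)(-k)^d[\PSL_2(\O_K):\Gamma]$. I expect the main obstacle to be bookkeeping the lower-order terms: verifying that the elliptic and cusp contributions (equivalently, the traces of the nontrivial $g$ and the higher cohomology) are genuinely $o(k^d)$, and fixing the normalization --- in particular the sign $(-1)^d$ --- so that the leading coefficient agrees on the nose with Siegel's value $2\zeta_K(-1)$.
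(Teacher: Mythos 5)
Your primary route --- quote the Shimizu/Thomas--Vasquez dimension formula, observe that only the volume term is of order $k^d$, and combine Siegel's theorem with the multiplicativity of covolume in the (generalized) index --- is exactly the paper's proof. The paper handles the passage to a commensurable subgroup in one step by citing Freitag's dimension formula \cite[Theorem II.3.5]{freitag}, whose leading term is again $\vol(\Gamma\backslash\HH^d)(2r-1)^d$ with all other terms of lower order, rather than via Selberg's lemma, a torsion-free normal subgroup, and averaging over $\Gamma/\Gamma'$. Your descent argument is workable (a nontrivial element of $\Gamma/\Gamma'$ acts on $\HH^d$ with fixed-point set of dimension $<d$, so its trace is $o(k^d)$), but it is considerably more machinery than the statement requires.

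One aside in your write-up is genuinely false and worth correcting, because it collides with the central theme of the paper. You assert that for torsion-free $\Gamma'$ the weight-$2k$ cusp forms are identified with $\HHH^0(\tilde X',\kpow{\tilde X'}{k})$, the cusp condition ``being precisely that a section of $\kpow{U}{k}$ extend across the boundary divisor,'' and that $\frac{1}{d!}(K_{\tilde X'})^d=(-1)^d\vol(\Gamma'\backslash\HH^d)$. For $k\ge 2$ a cusp form of weight $2k$ need \emph{not} extend to a section of $\kpow{\tilde X'}{k}$ across a resolution of the cusps: the failure is measured by the defects $\delta_P(k)$ of Section~\ref{sec:defects}, and these grow like $k^d$, so the discrepancy is of leading order, not an error term. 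Correspondingly, Hirzebruch--Mumford proportionality computes the top self-intersection of the \emph{log}-canonical class $K_{\tilde X'}+D$, not of $K_{\tilde X'}$. The correct algebro-geometric chain is $\dim S_{2k}=\hhh^0\bigl(k(K+D)-D\bigr)=\hhh^0\bigl(k(K+D)\bigr)+O(k^{d-1})$ together with $(K+D)^d/d!=(-1)^d\vol$; your two misstatements cancel only because you made both at once. Since you also invoke the Shimizu formula directly, the proof as a whole survives, but the ``equivalently'' clause should be deleted or repaired.
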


\begin{proof} In the case where $\Gamma = \PSL_2(\O_K)$, this follows from
  \cite[(2.2)]{tv-rhmf}, since in their terminology $\PSL_2(\O_K)$ is always
  of modular type.  To pass to a commensurable subgroup we use
  \cite[Theorem II.3.5]{freitag} to see that the leading coefficient
  is proportional to $[\PSL_2(\O_K):\Gamma]$, it being clear that the
  terms in the formula other than $\vol(\Gamma\backslash \HH^n)(2r-1)^n$
  are of lower order.
\end{proof}

%% \begin{myremark}\label{rem:always-parallel}
%%   In this paper we will only use modular forms for which all the
%%   $k_i$ are equal, and thus we will write the weight as a single integer
%%   rather than a sequence.
%% \end{myremark}

We now consider the special points of Hilbert modular varieties, namely
fixed points of nonidentity elements of $\Gamma$ and the cusps used
to compactify $\Gamma \backslash \HH^d$.  The first of these must be studied
in order to obtain an exact formula for the dimension of the space of modular
forms, while the second are vital for estimating the difference between this
dimension and the dimension of $\HHH^0$ of powers of the canonical line bundle.

\begin{defn}\cite[p. 15]{vdg}\label{def:elliptic-pt} An {\em elliptic point} of
  $H_{K,I;A}$ is the image of the fixed point of an element of
  $\Gamma_0(I;A)$ of finite order greater than $1$.
\end{defn}

\begin{notation}\label{not:cusp-ells}
  Let $C_{K,I;A}, E_{K,I;A}$ be the sets of cusps and elliptic
  points of $H_{K,I;A}$ respectively.
\end{notation}

\begin{defn}\cite[Definition I.6.2]{vdg}
  Let $U$ be an open subset of $\HH^d \setminus E_{K,I;A}$.
  Let $C_U$ be the subset of $C_{K,I;A}$ consisting of cusps
  such that $U$ contains a punctured neighbourhood of some representative of
  $C$, and similarly for $E_U$ and elliptic points.
  A {\em Hilbert cusp form} on $U$
  is a Hilbert modular form on $U$ that extends to
  a holomorphic function on $U \cup \Gamma \backslash (C_U \cup E_U)$
  that is $0$ on all cusps of $C_U$.
\end{defn}

\begin{thm}\label{thm:can-div-hmv}\cite[Section 3.6, Lemma]{hirz}
  For a resolution of singularities ${\tilde H_{K,I;A}}$
  of $H_{K,I;A}$, the sections of $K_{\tilde H_{K,I;A}}$ are precisely the Hilbert
  cusp forms of weight $2$.
\end{thm}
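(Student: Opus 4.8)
The plan is to exhibit a linear isomorphism between $\HHH^0({\tilde H_{K,I;A}},K_{\tilde H_{K,I;A}})$ and the space of weight-$2$ Hilbert cusp forms by routing both through the space of $\Gamma$-invariant holomorphic $d$-forms on $\HH^d$, where $d=[K:\Q]$ and $\Gamma=\Gamma_0(I;A)$. Since $\hhh^0$ of the canonical bundle is a birational invariant of smooth projective varieties, a holomorphic $d$-form extends over one resolution iff it extends over every resolution, so I am free to use a convenient model: one whose cusps are resolved by a smooth $\otp$-invariant subdivision of the totally positive cone (as recalled in the introduction) and whose elliptic singularities are resolved torically. On the smooth open locus $U_0\subseteq\Gamma\backslash\HH^d$ obtained by deleting the cusps and elliptic points, the case $k=1$ of Remark~\ref{rem:hmf-weight-2k-can-k} identifies a weight-$2$ form $f$ with the section $\omega_f=f(z)\,dz_1\wedge\cdots\wedge dz_d$ of $\kpow{U_0}{1}$. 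Conversely, any $s\in\HHH^0(K_{\tilde H_{K,I;A}})$ restricts to $U_0$ and pulls back to a $\Gamma$-invariant holomorphic $d$-form on the preimage of $U_0$ in $\HH^d$; this form extends across the elliptic fixed loci (by the vanishing forced by invariance in codimension one and by Hartogs in higher codimension), giving a genuine weight-$2$ form $f$. Everything therefore reduces to showing that $\omega_f$ extends to a global section of $K_{\tilde H_{K,I;A}}$ precisely when $f$ is a cusp form.

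For the cusps I would work one cusp at a time, moving it to $\infty$ so that $f(z)=\sum_\nu a_\nu e^{2\pi i\,\tr(\nu z)}$, the sum running over $\nu=0$ and the totally positive elements of the lattice dual (under the trace form) to the translation module of the cusp. Fix a top cone $\sigma=\langle v_1,\dots,v_d\rangle$ of the subdivision, the $v_j$ being a totally positive $\Z$-basis of that module; in the associated affine toric chart with coordinates $t_1,\dots,t_d$ one has $e^{2\pi i\,\tr(\nu z)}=\prod_j t_j^{\tr(\nu v_j)}$, and since the $v_j$ form a basis, $dz_1\wedge\cdots\wedge dz_d$ is a nonzero constant multiple of $\frac{dt_1\wedge\cdots\wedge dt_d}{t_1\cdots t_d}$. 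Hence $\omega_f$ equals, up to a unit, $\sum_\nu a_\nu\prod_j t_j^{\tr(\nu v_j)-1}\,dt_1\wedge\cdots\wedge dt_d$, which is holomorphic at the torus-fixed point $t_1=\cdots=t_d=0$ exactly when $\tr(\nu v_j)\ge1$ for all $j$ whenever $a_\nu\neq0$. For $\nu\gg0$ and $v_j\gg0$ the integer $\tr(\nu v_j)$ is automatically at least $1$, so the sole obstruction is the constant term $\nu=0$, which contributes a logarithmic pole unless $a_0=0$. Ranging over all charts of all cusp resolutions then shows that $\omega_f$ extends over the cusps if and only if $f$ vanishes at every cusp, that is, is a cusp form.

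It remains to check that the elliptic points impose no further condition at weight $2$, and this is the delicate point. Near an elliptic point the stabilizer is a finite cyclic (more generally abelian) group $G$ acting on the tangent space $\C^d$ by $w_j\mapsto\zeta^{a_j}w_j$ with $\zeta$ a primitive $r$th root of unity; invariance of $\omega_f$, which holds by the weight-$2$ transformation law, forces each monomial term $w^{\mathbf m}\,dw_1\wedge\cdots\wedge dw_d$ to satisfy $\sum_j(m_j+1)a_j\equiv0\pmod r$. On the toric resolution the exceptional divisors correspond to primitive lattice points $w=\frac1r(x_1,\dots,x_d)$ of the quotient lattice, with $(x_1,\dots,x_d)\equiv t(a_1,\dots,a_d)\pmod r$ for some $t$, and the order of the term along such a divisor is $\frac1r\sum_j(m_j+1)x_j-1$. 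The congruence on the $a_j$ propagates to make $\sum_j(m_j+1)x_j$ a positive multiple of $r$, hence at least $r$, so the order is nonnegative along every exceptional divisor: the vanishing of $f$ imposed by the transformation law cancels the (possibly negative) discrepancy exactly, and $\omega_f$ extends with no loss of sections. Combining the three steps gives the two inclusions and hence the stated equality. I expect this elliptic analysis to be the main obstacle: unlike the cusp singularities, the quotient singularities here may be non-canonical---for example of type $\frac1r(1,\dots,1)$ with $r\ge3$, whose discrepancies are negative---so one cannot simply invoke canonicity, and the heart of the matter is the observation that at weight $2$ (but not at higher weight, where the defects of Section~\ref{sec:defects} appear) the modular transformation law supplies exactly the vanishing needed for holomorphic extension.
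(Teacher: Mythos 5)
Your proof is correct, and it is essentially the argument the paper delegates to its citation: the paper proves Theorem \ref{thm:can-div-hmv} only by invoking Hirzebruch's Section 3.6 Lemma (noting that the quadratic-field hypothesis is not used there), and that lemma is established exactly by your computation --- the $q$-expansion in smooth toric charts at a resolved cusp, where every totally positive Fourier index contributes $\tr(\nu v_j)\ge 1$ and only the constant term produces the logarithmic pole, combined with the Freitag/van der Geer observation (recorded in Remark \ref{rem:defects-small}) that the weight-$2$ invariance condition $\sum_j(m_j+1)a_j\equiv 0\pmod r$ forces nonnegative order along every exceptional divisor of the resolved quotient singularities even though these need not be canonical. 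Your reduction to a convenient toric model via the birational invariance of $\hhh^0$ of the canonical sheaf is legitimate and is the right way to make the statement independent of the chosen resolution.
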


Hirzebruch only states this for quadratic fields, but the proof does not
use this assumption.
This theorem does not extend to powers of $K_{\tilde H_{K,I;A}}$;
the sections of $nK$ are not simply the cusp forms of weight $2n$.
To describe the situation, we introduce some further notation.

\begin{notation}\label{not:neighbourhoods}
  Let $P$ be a cusp or elliptic point for $H$.  Let $U_P$ be a small
  punctured neighbourhood of $P$ (a complex manifold) and let
  $V_P = U_P \cup \{P\}$.  Let $\tilde V_P$ be any resolution of the singular
  point $P$ of $V_P$ (nothing depends on the choice).
  The natural injection
  $\HHH^0(\tilde V_P,\kpow{\tilde V_P}{q}) \hookrightarrow \HHH^0(U_P,\kpow{U_P}{q})$
  will be denoted $e_{P;q}$.
\end{notation}

\begin{defn}\cite[p.~6]{knoller}
  The {\em $q\/$th defect} $\delta_P(q)$ of $P$ is
  $\dim \coker e_{P;q}$.
\end{defn}

\begin{myremark}\label{rem:defects-small}
  %For $q = 0$ the map $e_{P;q}$ is the identity map on the $1$-dimensional
  %space of constant functions, so $\delta_P(0) = 0$ for all $P$.
  We have $\delta_P(0) = 0$ for all $P$; for elliptic points this is
  the case $m=0$ of \cite[Satz 2.0]{knoller}, while for cusps it is a direct
  consequence of \cite[Satz 2.4]{knoller} (see
  Definition~\ref{def:knoller-notation} for an explanation of the notation).
  For $q = 1$, Theorem~\ref{thm:can-div-hmv} shows that $\delta_P(1) = 1$
  if $P$ is a cusp, at least if $[K:\Q] > 1$.  In addition, if $P$ is
  an elliptic point then $\delta_P(1) = 0$.
  This was first shown by Freitag; it is clearly explained 
  on \cite[p. 56]{vdg}.
\end{myremark}

Now let $S = C_{K,I;A} \cup E_{K,I;A}$ and let $U = H_{K,I;A} \setminus S$,
which is a smooth complex manifold.
For each $P \in S$, let $\tilde V_P$ be a resolution of $U \cup P$, and let
$\tilde H_{K,I;A}$ be a simultaneous resolution of all the singularities,
obtained by identifying the $\tilde V_P$ along the open subsets identified
with $U$.  We then have
$\HHH^0(\tilde H_{K,I;A},K^{\otimes q}) = \bigcap_{P \in S} \im e_{P;q}$.  It follows that
$\hhh^0(\tilde H_{K,I;A},K^{\otimes q}) \ge \dim M_{2q} - \sum_{P \in S} \delta_P(q)$,
since $M_{2q} = \hhh^0(H_{K,I;A} \setminus S,K^{\otimes q})$ (in contrast to the
case $K = \Q$, where we need an additional condition to ensure that modular
forms are holomorphic at the cusps; see \cite[Section I.6]{vdg}).
We thus obtain a lower bound for $\hhh^0(K^{\otimes q})$,
namely
\begin{equation}\label{eqn:lower-bound}
  \hhh^0(K^{\otimes q}) \ge \dim M_{2q} - \sum_{P \in S} \delta_{P}(q).
\end{equation}
If we can prove that this lower bound is positive (resp.~greater than $1$)
for some $q$, then we have shown that the Kodaira dimension of the Hilbert
modular variety is nonnegative (resp.~positive).  If we can estimate the
lower bound as being at least $cq^d$ for some $c>0$, the variety is of
general type.  
Grundman did this and essentially proved in \cite[Theorem 3]{gr-dcs},
\cite[Theorem 3]{gr-ds} that the component of the Hilbert modular
threefolds corresponding to the principal genus
for \cf{697} is of positive Kodaira dimension,
while for \cf{761}, \cf{985} it is of general type.

\begin{myremark}\label{rem:grundman-general-type}
  %% These fields are of the form $K_{a,b}$ (Definition~\ref{def:krs-kn})
  %% for some small $a,b$.  In each
  %% case $\alpha$ is a totally positive unit that is not a square and the
  %% narrow class number is $2$.  Thus there are two Hilbert modular
  %% varieties of level $1$ (cf.~Remark \ref{rem:hmv-components}).
  Grundman states only that the plurigenera are not equal to $0$ for the
  principal genus, concluding that the varieties are not rational.
  However, the statement that $H_K$ is of general type for the second and third
  fields follows from her results.
  We will explain this in Example~\ref{ex:gr-761} and
  Remark~\ref{rem:gr-985}.
\end{myremark}

\subsection*{Acknowledgments} This paper is inspired by the work done
on Hilbert modular forms in the context of the Simons Collaboration on
Algebraic Geometry, Number Theory, and Computation.  Some of
the code from \cite{hmf} was used in this work.  I would like
to thank all of the authors of \cite{hmf,hmf-mult} and especially
Eran Assaf, Abhijit Mudigonda, and John Voight for their patient replies to my
numerous questions, Colin Ingalls for a helpful conversation on
quotient singularities, and Leland McInnes for advice on producing interactive
three-dimensional figures.  I would also like to thank the referees for their
detailed and valuable criticism which led to substantial improvements in the
exposition and the technical reviewer for pointing out some bugs and flaws
in the code.

In addition,
I thank the Department of Pure Mathematics at the University of Waterloo,
where this work was begun, for its hospitality, and
the Tutte Institute for Mathematics and Computation for its support of
my external research and for the opportunity to take leave from my
employment there.

\section{Defects}\label{sec:defects}
Thomas and Vasquez give a formula \cite[Theorem 3.10]{tv-rhmf}
for the dimension of $M_{2q}$ for
certain special subgroups of $\PSL_2(K)$.
A general formula for $\Gamma_0(I)$ does not seem to appear in one
single place in the literature, but can be extracted from
\cite[39.10]{voight} and \cite[5.1]{hmf-mult}.
The calculation is implemented in a Magma script \cite{code}
(I thank John Voight for many clarifying comments).
This leaves the problem of computing the defects.

Thomas \cite[Section 1]{thomas-dcs},
referring to \cite{knoller}, gives a method for evaluating
the defect in terms of a count of
elements with multiples of bounded trace.  We give the details in
Definition~\ref{def:knoller-notation}.

\begin{defn}\label{def:knoller-notation}\cite{knoller}
  Let $M \subset K$ be a free abelian subgroup of rank $d = [K:\Q]$ and
  $V \subseteq \otp$ a group of totally positive units
  preserving $M$.
  Following Kn\"oller, we consider the cusp singularity at the image of
  $\infty$ in the quotient of $\HH^d \cup \{\infty\}$ by the
  group of matrices
  $\left\{\begin{psmallmatrix}v&g\\0&1\end{psmallmatrix}: v \in V, g \in M\right\}$:
  we refer to this as a cusp of {\em type $(M,V)$}.  Let $\delta_{(M,V)}(q)$
  be the $q$th defect of a cusp of type $(M,V)$.
  Let $\hat M = \{x \in K: \tr xm \in \Z \textup{ for all } m \in M\}$
  be the dual of $M$ with respect to the trace.  For $q \in \N$, let
  $\Lambda_q(M) = \{x \in {\hat M}_+: \tr xm < q \textup{ for some } m \in M_+\} \cup \{0\}$.
\end{defn}

For any such $M$, a suitable subgroup $V$ exists:
\begin{lem}\label{lem:group-preserved}
  Let $M \subset K$ be a free abelian subgroup of rank $d$.  The subgroup
  of $\otp$ preserving $M$ is of finite index in $\otp$.
\end{lem}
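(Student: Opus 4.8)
The plan is to realize the subgroup in question as the unit group of an order in $K$ and then to apply Dirichlet's unit theorem. First I would introduce the \emph{ring of multipliers} $\O_M = \{x \in K : xM \subseteq M\}$. For $x \in \O_M$, multiplication by $x$ is a $\Z$-linear endomorphism of the finitely generated free $\Z$-module $M$ of rank $d$, so $x$ is a root of the corresponding characteristic polynomial, a monic polynomial in $\Z[t]$; hence $x$ is an algebraic integer and $\O_M \subseteq \O_K$. To see that $\O_M$ has rank $d$, fix a $\Z$-basis $m_1,\dots,m_d$ of $M$ and write $m_im_j = \sum_k q_{ijk}m_k$ with $q_{ijk} \in \Q$; clearing all denominators by a single positive integer $N$ shows that $Nm_1,\dots,Nm_d \in \O_M$, and these are $\Q$-linearly independent because the $m_i$ span $K$. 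Thus $\O_M$ is a full-rank subring of $\O_K$, i.e.\ an order in $K$.

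Next I would identify the stabilizer. A totally positive unit $u$ lies in the subgroup of $\otp$ preserving $M$ only if its inverse does as well, so $uM \subseteq M$ and $u^{-1}M \subseteq M$, giving $uM = M$; equivalently both $u$ and $u^{-1}$ lie in $\O_M$, i.e.\ $u \in \O_M^\times$. Hence the subgroup of $\otp$ preserving $M$ is precisely $\O_M^\times \cap \otp$.

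It remains to bound the index of $\O_M^\times \cap \otp$ in $\otp$. Since $K$ is totally real of degree $d$, Dirichlet's unit theorem (valid for any order) gives that $\O_M^\times$ and $\O_K^\times$ are both finitely generated of rank $d-1$. As $\O_M^\times \subseteq \O_K^\times$, the quotient $\O_K^\times/\O_M^\times$ has rank $0$ and is therefore finite. Finally, $\O_K^\times$ being abelian, the second isomorphism theorem yields an injection
\[
  \otp/(\O_M^\times \cap \otp) \hookrightarrow \O_K^\times/\O_M^\times,
\]
so $\otp/(\O_M^\times \cap \otp)$ is finite, which is the claim.

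The argument is formal once the ring of multipliers is available; the one place where the hypothesis genuinely enters is the rank computation for $\O_M$, where the assumption that $M$ has full rank $d$ is exactly what forces $\O_M$ to be an order and hence its unit group to have the full Dirichlet rank $d-1$. I therefore expect no serious obstacle beyond keeping track of this point.
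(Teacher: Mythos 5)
Your proof is correct, but it takes a genuinely different route from the paper's. The paper rescales $M$ into $\O_K$ and then observes that $\otp$ permutes the finitely many subgroups $N \subseteq \O_K$ with $\O_K/M \cong \O_K/N$ (equivalently, of the given index), so the stabilizer of $M$ is the stabilizer of a point in a finite orbit and hence has finite index --- a purely combinatorial finiteness argument with no input from the unit theorem. You instead identify the stabilizer structurally: it is $\O_M^\times \cap \otp$ where $\O_M = \{x \in K : xM \subseteq M\}$ is the multiplier ring, which your rank computation shows to be an order; Dirichlet's unit theorem for orders then gives $[\O_K^\times : \O_M^\times] < \infty$, and the second isomorphism theorem finishes. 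Your version buys an explicit description of the stabilizer as the totally positive unit group of a concrete order (and avoids the rescaling step, since the multiplier ring makes sense for any full-rank lattice in $K$), at the cost of invoking the unit theorem for non-maximal orders --- whose standard proof of the finite-index statement is itself essentially the paper's finite-orbit argument. All the individual steps you give (integrality of elements of $\O_M$, the full-rank argument via $Nm_1,\dots,Nm_d$, the identification $uM = M \iff u \in \O_M^\times$, and the finiteness of a rank-zero quotient of a finitely generated abelian group) check out.
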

  
\begin{proof} By multiplying by a suitable positive integer we may assume
  that $M \subset \O_K$.  Then $\otp$ acts on subgroups
  $N \subseteq \O_K$ with $\O_K/M \cong \O_K/N$.  There are finitely
  many such subgroups, so the stabilizer of any one of them is of finite index.
\end{proof}

In \cite[Satz 2.4]{knoller} Kn\"oller proves:

\begin{thm}\label{thm:knoller-defect} 
  The $q$th defect of a cusp of type $(M,V)$ is equal to
  $\#(\Lambda_q(M)/V)$.
\end{thm}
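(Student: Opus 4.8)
The plan is to compute both cohomology groups appearing in $e_{P;q}$ through Fourier expansions at the cusp and to read off the cokernel monomial by monomial. After conjugating the cusp to $\infty$, a section of $\kpow{U_P}{q}$ is the same thing as a Hilbert modular form $f$ of weight $2q$ on a neighbourhood of $\infty$ together with the tautological $q$-canonical form $f(z)\,(dz_1\wedge\cdots\wedge dz_d)^{\otimes q}$. Since $f$ is invariant under the translations $z\mapsto z+g$ for $g\in M$, it has a Fourier expansion $f(z)=\sum_{\mu}a_\mu\exp(2\pi i\,\tr(\mu z))$ indexed by the dual lattice $\hat M$, and Koecher's principle \cite[\S I.3]{vdg} forces the support to lie in $\hat M_+\cup\{0\}$. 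The remaining generator of the cusp group, namely $z\mapsto vz$ for $v\in V$, multiplies $(dz_1\wedge\cdots\wedge dz_d)^{\otimes q}$ by $N(v)^q=1$ and permutes the frequencies by $\mu\mapsto v\mu$; invariance of the $q$-canonical form therefore amounts to $a_{v\mu}=a_\mu$. Thus $\HHH^0(U_P,\kpow{U_P}{q})$ has a basis indexed by the $V$-orbits on $\hat M_+\cup\{0\}$, and the defect will be the number of these orbits whose monomial forms fail to extend across the resolution.

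Next I would make the resolution explicit as a toroidal one: choose a smooth $V$-admissible fan $\Sigma$ subdividing the totally positive cone, with all rays generated by elements of $M$ (this is the cusp resolution, and the statement permits any resolution since nothing depends on the choice). On the affine chart attached to a maximal cone $\sigma=\langle m_1,\dots,m_d\rangle$, with $\{m_j\}$ a $\Z$-basis of $M$ and dual basis $\{\mu_j\}$ of $\hat M$, the functions $w_j=\exp(2\pi i\,\tr(\mu_j z))$ are local coordinates vanishing along the exceptional divisors, and one computes $\exp(2\pi i\,\tr(\mu z))=\prod_j w_j^{\tr(\mu m_j)}$ together with $(dz_1\wedge\cdots\wedge dz_d)^{\otimes q}=c\,(w_1\cdots w_d)^{-q}(dw_1\wedge\cdots\wedge dw_d)^{\otimes q}$ for a nonzero constant $c$. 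Hence the monomial form equals $c\prod_j w_j^{\tr(\mu m_j)-q}(dw_1\wedge\cdots\wedge dw_d)^{\otimes q}$ and extends holomorphically across this chart exactly when $\tr(\mu m_j)\ge q$ for every $j$. Running over all charts, extension to $\tilde V_P$ holds iff $\tr(\mu m)\ge q$ for every ray generator $m$ of $\Sigma$; and since inside each smooth cone a lattice point $m\in M_+$ is a non-negative integral combination of the ray generators with at least one positive coefficient, this is equivalent to $\tr(\mu m)\ge q$ for all $m\in M_+$.

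I would then assemble the two descriptions. The image of $e_{P;q}$ consists precisely of the $V$-invariant series supported on $\{\mu\in\hat M_+:\tr(\mu m)\ge q\ \text{for all } m\in M_+\}$, that is, on the complement of $\Lambda_q(M)$, while the whole space is supported on $\hat M_+\cup\{0\}$. Therefore the cokernel is identified with the space of $V$-invariant series supported on $\Lambda_q(M)$. Because $V$ preserves $M$ and the trace form, it preserves $\hat M$ and carries $\Lambda_q(M)$ into itself (replacing a witness $m$ by $v^{-1}m$), and because a totally positive unit fixing a nonzero element of $K$ is trivial, $V$ acts freely on the nonzero elements of $\Lambda_q(M)$; the $V$-invariant series, having disjoint Fourier support on distinct orbits, are then in bijection with the orbit space, giving $\dim\coker e_{P;q}=\#(\Lambda_q(M)/V)$. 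The constant frequency $\mu=0$, corresponding to $(dz_1\wedge\cdots\wedge dz_d)^{\otimes q}$ itself, extends precisely when $q=0$, matching its inclusion in $\Lambda_q(M)$ for $q\ge 1$.

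The main obstacles are analytic rather than combinatorial. The delicate points are the justification of Koecher's principle and, more importantly, the convergence of the orbit sums $\sum_{\mu'\in V\mu}\exp(2\pi i\,\tr(\mu' z))$ to genuine holomorphic sections on $U_P$, so that each orbit really contributes one dimension; this is where I would follow Kn\"oller \cite{knoller} closely. One also needs the finiteness of $\Lambda_q(M)/V$, which holds because $V$ has finite index in $\otp$ and $\otp$ acts on the totally positive cone with compact quotient, so $\Lambda_q(M)$ meets only finitely many orbits. Finally, the reduction from ``all ray generators'' to ``all $m\in M_+$'' must be stated with care, as it is exactly what guarantees that the defect is independent of the chosen fan, as the statement demands.
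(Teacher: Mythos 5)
Your argument is correct and is essentially the proof of Kn\"oller's Satz~2.4, which the paper cites for this statement rather than reproving: expand sections of $\kpow{U_P}{q}$ in Fourier series supported on $\hat M_+\cup\{0\}$ with $V$-invariant coefficients, test extendability monomial by monomial on the toroidal charts of a cusp resolution, and identify $\coker e_{P;q}$ with the space of $V$-invariant coefficient functions on $\Lambda_q(M)$. The delicate points you flag (Koecher's principle, convergence of the orbit sums, finiteness of $\Lambda_q(M)/V$, and the reduction from ray generators to all of $M_+$, which gives independence of the chosen fan) are exactly the ones Kn\"oller handles, so nothing essential is missing.
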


\begin{myremark} In particular, we always have $\Lambda_1 = \{0\}$ and so
  $\delta_{M,V}(1) = 1$.  This is closely related to
  Theorem~\ref{thm:can-div-hmv}.
\end{myremark}

\begin{myremark}\label{rem:restrict-field}
  Thomas states that it is difficult to calculate $\delta_{M,V}(q)$ in general
  and imposes an additional restriction
  that implies that every element of $\Lambda_q(\O_K)/V$ is represented by
  $x \in \hat \O_{K,+}$ with $\tr x < q$.  Later works of
  Thomas-Vasquez \cite{tv-rcs} and Grundman \cite{gr-dcs,gr-ds}
  follow Thomas in imposing this restriction.
  This holds for the $K_{r,s}$ and $K_n$
  (Definition~\ref{def:krs-kn}) under the additional hypothesis that a root
  of the given defining polynomial generates the ring of integers
  \cite[Section 3]{tv-rcs}.
  In these cases they construct explicit cusp resolutions: see
  \cite[Section 3]{tv-rcs}, \cite[Section 6]{gr-dcs}.
  %% Note that in both cases it is easy to describe a set of independent units
  %% of maximal rank.  The polynomial defining $K_n$ has square discriminant and
  %% thus defines a Galois extension of $\Q$, and clearly a root $\alpha$
  %% of the polynomial is a unit, so $\alpha, \alpha^\sigma$ are independent units
  %% where $\sigma$ generates the Galois group.  For the $K_{r,s}$,
  %% if $\beta(\beta-r)(\beta-s) = 1$
  %% then visibly $\beta, \beta-r$ are independent units.
  
  In this work we will describe practical methods for computing $\delta_{M,V}$
  without an explicit cusp resolution.
  %We do not describe the cusp resolutions
  %explicitly, although its properties are used implicitly.
  This allows us to prove results analogous to those of \cite{tv-rcs,gr-ds} for
  general totally real cubic fields.
\end{myremark}

\begin{question}\label{q:only-these-cubics}
  Let $K$ be a real cubic field such that for all $q$, 
  every element of $\Lambda_q(\O_K)/V$ is represented by an element
  with trace less than $q$.  Must $K$ belong to one of the families
  $K_{r,s}$ or $K_n$?  Is there a similar classification for fields of
  higher degree?
\end{question}

\begin{myremark}\label{rem:which-larger-fields}
  The definition of the fields $K_{r,s}$ can be extended to
  arbitrary degree, defining $K_{r_1,\dots,r_n}$ to be the field of degree $n+1$
  defined by the polynomial $x \prod_{i=1}^n (x-r_i) - 1$.  For $n = 1$,
  we obtain the quadratic fields $\Q(\sqrt{n^2+4})$.  However,
  these do not necessarily have the property of
  Question~\ref{q:only-these-cubics}.  For example, with $n = 16$ this field is
  $\Q(\sqrt{65})$, in which
  $\frac{9+\sqrt{65}}{2} \cdot \frac{9-\sqrt{65}}{2} = 4$.  One
  easily concludes that $\frac{9+\sqrt{65}}{2}$ is an element of
  $\Lambda_9(\O_K)$ having no multiple by a totally positive unit of trace
  less than $9$.
  %% Likewise, in degree $4$, the field $K_{2,4,5}$
  %% has class number $1$,
  %% ring of integers generated by $\alpha$, and unit group generated by
  %% $-1$ and $\alpha - i$ for $i \in \{0,2,4,5\}$.  Nevertheless, setting
  %% $$\rho = -\alpha^3+7\alpha^2-9\alpha, \quad \sigma = 2876267-2455291\alpha+657471\alpha^2-54847\alpha^3,$$
  %% we find that $\tr \rho\sigma = 2231968 < \tr \sigma = 3278203$, even though
  %% $\sigma$ cannot be multiplied by a totally positive unit to obtain an
  %% element of smaller trace.
  In Section~\ref{sec:trace-min-reducer} we will
  present systematic methods to find such elements or prove that they
  do not exist.
\end{myremark}
  
We now return to describe the defects of cyclic quotient singularities,
following \cite[II.6, III.3]{vdg}.  Van der Geer only treats the case of
dimension $2$, but his methods apply without change to the general situation.
Thus we only state the results.

\begin{notation} Let $a_1, \dots, a_n$ be positive integers relatively
  prime to $m$ and less than $m$.  Let $L$ be the sublattice of $\Z^n$
  consisting of vectors $(x_1,\dots,x_n)$ with
  $m|\sum_{i=1}^n a_i x_i$.  The dual lattice, which is
  generated over $\Z^n$ by $(a_1,\dots,a_n)/m$, is denoted $M$.
  For $k = 1, \dots, m-1$ let $P_k = \frac{1}{m} (k a_j \bmod m)_{j=1}^n \in M$,
  and define a simplex $T_k$ by the conditions $x_1, \dots, x_n \ge 1$ and
  $\sum_i {P_k}_i x_i < 1$.  (Note that $T_k = \emptyset$ if
  $\sum_i {P_k}_i \ge 1$.)  Let $T = \cup_i T_i$ and let $T(q)$ be $T$
  scaled up by $q$.
\end{notation}

\begin{defn}\label{def:cyclic-quotient} 
  Let $m \in \Z^+$ and let $a_1, \dots, a_n$ be positive integers less than $m$
  and relatively prime to $m$.
  Let the cyclic group $C_m$ of order $m$ act on $\A^n$ such that a generator
  acts by $(x_1,\dots,x_n) \to (\mu_m^{a_1} x_1,\dots,\mu_m^{a_n} x_n)$.
  A singularity locally isomorphic to that 
  of $\A^n/C_m$ at the origin is called a
  {\em cyclic quotient singularity of type} $(a_1,\dots,a_n;m)$.
\end{defn}

Such a singularity is isolated by \cite[Corollary 2.2]{morrison-stevens}.

\begin{thm}\label{thm:ell-defect}\cite[p. 56]{vdg}
  The $q$th defect of a cyclic quotient singularity of type
  $(a_1,\dots,a_n;m)$ is the number of $L$-points of $T(q)$.
\end{thm}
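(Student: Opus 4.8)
The plan is to realize the singularity torically and reduce the defect to a comparison of two lattice-point counts. Since $\gcd(a_i,m)=1$ for every $i$, no nonidentity power of a generator of $C_m$ fixes a point of $\A^n\setminus\{0\}$, so $C_m$ acts freely there and $U_P$ is identified with the smooth quotient $(\A^n\setminus\{0\})/C_m$. Then $\kpow{U_P}{q}$ is the descent of the $C_m$-equivariant bundle $\kpow{(\A^n\setminus\{0\})}{q}$, and $\HHH^0(U_P,\kpow{U_P}{q})$ is the space of $C_m$-invariant holomorphic $q$-pluricanonical forms on $\A^n\setminus\{0\}$. As $n\ge 2$, Hartogs' theorem extends each such form over the origin, so these are exactly the invariant forms $f\,(dx_1\wedge\cdots\wedge dx_n)^{\otimes q}$ with $f$ polynomial. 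Writing $\omega=\frac{dx_1}{x_1}\wedge\cdots\wedge\frac{dx_n}{x_n}$ and expanding $f$ into monomials, a basis is given by the forms $\chi^u\omega^{\otimes q}=x^{u-(q,\dots,q)}(dx_1\wedge\cdots\wedge dx_n)^{\otimes q}$, where $\chi^u$ runs over the $C_m$-invariant monomials — equivalently $u\in L$ — subject to $u_i\ge q$ for all $i$ (nonnegativity of the exponents).

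Next I would set up the resolution torically. In these coordinates $\A^n/C_m$ is the affine toric variety of the first-orthant cone $\sigma=\mathrm{Cone}(e_1,\dots,e_n)$ taken with respect to the lattice $M$ dual to the character lattice $L$, and any resolution $\tilde V_P$ comes from a smooth subdivision $\Sigma$ of $\sigma$; each ray of $\Sigma$ has a primitive generator $v\in\sigma\cap M$, the rays $e_1,\dots,e_n$ (still primitive in $M$) giving the strict transforms of the coordinate hyperplanes and the interior rays giving exceptional divisors $E_v$. A local computation in a smooth cone containing $v$ shows $\mathrm{ord}_{E_v}(\chi^u\omega^{\otimes q})=\langle u,v\rangle-q$, where $\langle\cdot,\cdot\rangle$ is the standard pairing realizing the duality of $L$ and $M$. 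Hence $\chi^u\omega^{\otimes q}$ lies in the image of $e_{P;q}$, i.e.\ extends to $\tilde V_P$, exactly when $\langle u,v\rangle\ge q$ for every ray $v$ of $\Sigma$.

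The crux is then to show that this condition is equivalent to the single family of inequalities $\langle u,P_k\rangle\ge q$ for $k=1,\dots,m-1$, where $P_k=\frac1m(ka_j\bmod m)_{j=1}^n$; note that each $P_k$ lies in the open orthant (no coordinate vanishes, since $\gcd(a_j,m)=1$) and that $0,P_1,\dots,P_{m-1}$ form coset representatives for $M/\Z^n$ inside the half-open unit cube. For the forward direction, a box point $P_k$ lies in a maximal cone of $\Sigma$ whose primitive generators form a $\Z$-basis of $M$ by smoothness; expressing $P_k$ in this basis yields nonnegative integer coefficients summing to at least $1$, so $\langle u,P_k\rangle$ is a nonnegative integer combination of values $\langle u,v\rangle\ge q$, hence $\ge q$. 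For the converse, any primitive $v\in\sigma\cap M$ can be written $v=P_k+z$ with $z\in\Z_{\ge0}^n$ (reduce $v$ modulo $\Z^n$ to its box representative $P_k$; the difference is a lattice vector with coordinates $>-1$, hence nonnegative), so $\langle u,v\rangle=\langle u,P_k\rangle+\sum_i z_iu_i\ge q$ from $u_i\ge q$ and the hypothesis, the case $k=0$ following directly from $u_i\ge q$ and $\sum_i v_i\ge 1$. This also shows the image of $e_{P;q}$, and hence the defect, is independent of the resolution, as asserted in Notation~\ref{not:neighbourhoods}.

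Combining these, $\chi^u\omega^{\otimes q}$ fails to extend — i.e.\ contributes to $\coker e_{P;q}$ — exactly when $u\in L$ satisfies $u_i\ge q$ for all $i$ and $\langle u,P_k\rangle<q$ for some $k\in\{1,\dots,m-1\}$. Since $T(q)=qT=q\bigcup_k T_k$ is cut out by the conditions $x_i\ge q$ together with ${\sum_i {P_k}_i x_i<q}$ for some $k$, this set of $u$ is precisely the set of $L$-points of $T(q)$, giving $\delta_P(q)=\#(L\text{-points of }T(q))$. I expect the main obstacle to be the careful bookkeeping in the equivalence of the third paragraph: verifying the order formula $\mathrm{ord}_{E_v}(\chi^u\omega^{\otimes q})=\langle u,v\rangle-q$ in local toric coordinates, and confirming that the box points $P_k$ detect all extension failures across every smooth subdivision, so that the resulting count is genuinely resolution-independent.
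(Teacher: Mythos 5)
Your argument is correct. Note that the paper itself gives no proof of this statement: it cites van der Geer's treatment of the two-dimensional case (via the explicit Hirzebruch--Jung cusp resolution) and asserts that the method extends without change. What you have written is precisely that extension, carried out in the natural toric language: identify $\HHH^0(U_P,\kpow{U_P}{q})$ with the $C_m$-invariant pluricanonical forms upstairs (freeness of the action away from $0$ and Hartogs are used correctly), index them by $u\in L$ with $u_i\ge q$, compute $\mathrm{ord}_{E_v}(\chi^u\omega^{\otimes q})=\langle u,v\rangle-q$ on a smooth subdivision, and then reduce the extension condition over all rays to the $m-1$ box-point inequalities $\langle u,P_k\rangle\ge q$. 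That last reduction is the real content and your two directions are both sound: the forward one because $P_k$ has nonnegative \emph{integer} coordinates in the $\Z$-basis of $M$ given by a smooth maximal cone containing it, the converse because any primitive $v\in\sigma\cap M$ equals its box representative plus a vector in $\Z_{\ge 0}^n$. Two small points to tighten. First, sections over the punctured neighbourhood are convergent power series rather than polynomials; this is harmless, since the image of $e_{P;q}$ is a monomial subspace and the cokernel is still counted monomial by monomial. Second, not every resolution of $\A^n/C_m$ is toric, so "any resolution comes from a smooth subdivision" is too strong as stated; your computation gives resolution-independence among toric resolutions, and the general case follows from the standard fact that $\im e_{P;q}$ is unchanged under domination by a further smooth blow-up (pluricanonical sections pull back isomorphically), which is also what justifies the paper's assertion in Notation~\ref{not:neighbourhoods}.
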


\begin{cor}\label{cor:ell-defect-asymptotic}
  Asymptotically the $q$th defect is equal to
  $\frac{\vol T}{m} q^d + O(q^{d-1})$. \qed
\end{cor}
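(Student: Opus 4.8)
The plan is to reduce the statement to a standard lattice-point count. By Theorem~\ref{thm:ell-defect} the $q$th defect equals $\#(L \cap T(q))$, and $T(q) = qT$ is the dilation of $T$ by the factor $q$; in the situation of interest (the cyclic quotient singularities arising from elliptic points of a $d$-dimensional Hilbert modular variety) the ambient dimension is $n = d$, which accounts for the exponent in the statement. So it suffices to show that $\#(L \cap qT) = \frac{\vol T}{m} q^{d} + O(q^{d-1})$.

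First I would compute the covolume of $L$. Since each $a_i$ is prime to $m$, the homomorphism $\Z^n \to \Z/m$ sending $(x_1,\dots,x_n)$ to $\sum_i a_i x_i \bmod m$ is surjective with kernel $L$; hence $[\Z^n:L] = m$ and $\covol L = m$. Next I would record that $T$ is bounded with piecewise-linear boundary. Each simplex $T_k$ is cut out by the linear inequalities $x_1,\dots,x_n \ge 1$ and $\sum_i {P_k}_i x_i < 1$. For $1 \le k \le m-1$ we have $k a_i \not\equiv 0 \pmod m$ because $\gcd(a_i,m)=1$, so every coordinate ${P_k}_i = \frac{1}{m}(k a_i \bmod m)$ is strictly positive; together with $x_i \ge 1$ this forces each nonempty $T_k$ to be bounded. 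Hence $T = \bigcup_k T_k$ is a bounded region whose boundary lies in a finite union of affine hyperplanes, so $\partial T$ is Lipschitz and has finite $(d-1)$-dimensional measure.

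Finally I would invoke the standard asymptotic from the geometry of numbers: for a full-rank lattice $\Lambda \subset \R^d$ and a bounded set $S$ with Lipschitz boundary, one has $\#(\Lambda \cap qS) = \frac{\vol S}{\covol \Lambda} q^{d} + O(q^{d-1})$. The error term is obtained by tiling $\R^d$ with translates of a fundamental domain $F$ for $L$ (of volume $m$): the count $\#(L \cap qT)$ differs from $\vol(qT)/\covol L$ only by contributions from the translates of $F$ meeting $\partial(qT)$, and these are confined to a tube of bounded width about $\partial(qT)$ whose volume is $O(q^{d-1})$, since the $(d-1)$-measure of $\partial(qT)$ scales like $q^{d-1}$. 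Applying this with $\Lambda = L$, $S = T$, and $\covol L = m$ yields $\frac{\vol T}{m} q^{d} + O(q^{d-1})$, as claimed.

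I expect no serious obstacle: the only point requiring care is the error bound, and it is routine once one knows that $\partial T$ is piecewise linear. The overlaps among the $T_k$ cause no difficulty, since counting points in, and computing the volume of, a finite union involves only the union itself, whose boundary remains contained in the finite collection of hyperplanes above. One should merely take a little care that the half-open convention (strict inequality $\sum_i {P_k}_i x_i < 1$ versus the closed faces $x_i \ge 1$) affects only which boundary lattice points are counted, hence is absorbed into the $O(q^{d-1})$ term and does not disturb either the volume or the leading coefficient.
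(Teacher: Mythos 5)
Your argument is correct and is exactly the standard lattice-point count that the paper treats as immediate (it offers no written proof beyond the \qed, relying on the same fact it cites elsewhere as \cite[Lemma 3.19]{beck-robins}): the defect is $\#(L\cap qT)$ by Theorem~\ref{thm:ell-defect}, $[\Z^n:L]=m$ gives $\covol L=m$, and the Gauss-type counting estimate for a bounded region with piecewise-linear boundary gives the leading term $\vol(T)q^d/m$ with error $O(q^{d-1})$. Your supporting observations (positivity of the coordinates of $P_k$ forcing boundedness, and the irrelevance of the open/closed boundary convention) are accurate, so nothing further is needed.
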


\begin{ex}\label{ex:cq-7-9}
  We consider the elliptic points of order $7, 9$ that arise for
  $\qz7$ and $\qz9$.  For the former, we have one point of
  each of the types $(1,\pm 3, \pm 2;7)$ by \cite[Proposition 2.10 (ii)]{tv-cn}.
  The $(1,4,2;7)$ point has all defects $0$,
  while the other three
  are isomorphic.  Indeed, given a $(1,3,2;7)$-singularity, we may replace
  the generator $g$ of $C_7$ by $g^4$ to see that it is also a
  $(4,5,1;7)$-singularity, or by $g^5$ to see that it is a singularity of type
  $(5,1,3;7)$.  The order of the $a_i$ is of no importance.
  We quickly compute that for each of them there is a
  single simplex of volume $1/36$, giving an asymptotic of $q^3/252$
  for the $q$th defect.  The first $10$ defects are $0,0,0,0,0,0,1,1,2,3$.

  Similarly, for $\qz9$, according to
  \cite[Proposition 2.10 (iii)]{tv-cn} the types are $(1,\pm 2, \pm 4;9)$,
  each occurring once.  All defects are $0$ for the point of type $(1,7,4;9)$
  and again the other three are equivalent.
  For $(1,2,4;9)$ and those isomorphic to it there are two simplices of
  volume $1/6, 1/60$, but the second is contained in the first so the volume
  of the union is just $1/6$.  So the asymptotic is $q^3/54$; the initial
  defects come to $0,0,0,0,1,2,4,6,10,14$.

  We display the relevant simplices in Figure \ref{fig:quotient-simplices}.
  \begin{figure}
    \includegraphics[scale=0.7]{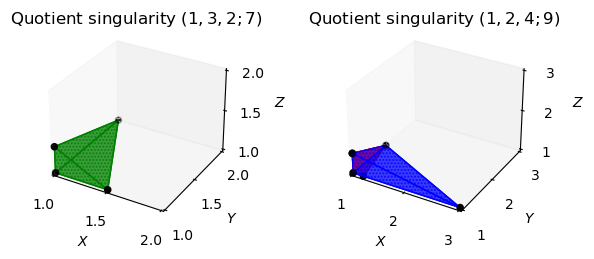}
    \caption{Simplices that describe the defects of $(1,3,2;7)$ and
      $(1,2,4;9)$-singularities.  All simplices are defined by the condition
      that all coordinates are at least $1$ and by one additional inequality.
      On the left, the choice $k = 1$ gives a single simplex defined
      by the inequality $x + 3y + 2z \le 7$.
      On the right, the blue simplex is defined by $x + 2y + 4z \le 9$ and the
      red simplex (contained in it) by $5x + y + 2z \le 9$.  Note that the
      scales are different.}\label{fig:quotient-simplices}
  \end{figure}
\end{ex}
  
\begin{myremark}\label{rem:cqs-2-3-canonical}
  As pointed out in \cite[Satz 2.0]{knoller}, the defects of
  cyclic quotient singularities of order $2, 3$ on Hilbert modular threefolds
  are $0$; this is immediate from Theorem~\ref{thm:ell-defect} as well.
  Thus, in light
  of the classification of cyclic quotient singularities of Hilbert modular
  varieties of cubic fields \cite[Theorem 2.12]{tv-rhmf} (less accessibly,
  though earlier, in \cite{gundlach}), the cyclic quotient singularities
  make no contribution to the defect for fields 
  other than $\qz7, \qz9$.

  I thank the referee for pointing out that
  this is analogous to the situation for quadratic
  fields, for which the fields of smallest discriminant
  $\Q(\sqrt 5), \Q(\sqrt 2), \Q(\sqrt 3)$ admit special quotient singularities
  that need to be considered separately (these are described in
  \cite[Table 1.III]{vdg}).
  In general \cite[(7)]{tsuyumine} gives all totally
  real fields for which there are elliptic points with nonvanishing defects.
  Excluding quadratic fields and quartic fields containing $\sqrt{3}$, the list
  is finite.
\end{myremark}
  
In the next two sections of this paper we describe methods for
calculating the sets $\Lambda_q(M)/V$ and estimating their asymptotic
growth that can be applied to general totally real
fields.  The most interesting case for us is that of
$M = \O_K, V = \otp$, which is associated to the single cusp of
$H_K$ when $K$ has class number $1$, but imposing this restriction does not
simplify the method, and we will consider more general singularities in
Sections~\ref{sec:results-level-1},\ref{sec:results-higher-level}.
%% Previous work on the
%% Hilbert modular varieties associated to fields in the families $K_n, K_{r,s}$
%% was strongly dependent on the existence of a special resolution of the cusp
%% and the
%% consequence that every element of $\Lambda_q(\O_K)$ is represented by an
%% element of trace less than $q$.  Our method does not require the explicit
%% construction of a resolution.

\section{Basic algorithms}\label{sec:algorithms}
Here we describe the algorithms that underlie our computations.  We assume
that the standard invariants of algebraic number theory can be computed;
in other words, given a number field $K$, we assume that its ring of integers,
different, class group, and unit group can be calculated, and that the prime
factorization of a fractional ideal can be determined.  Such calculations
can be performed by various symbolic computation systems, such as Magma
(used in writing the paper and needed to run the accompanying
scripts) and PARI/GP \cite{gp}.  In addition to these,
we need algorithms to find the elements of an ideal $I$ of $\O_K$
whose real embeddings
satisfy certain inequalities and to find the volume of a rational polytope.

\subsection{Rational points in a polytope}
\begin{defn} A {\em rational polytope} $P$ in $\R^n$ is an intersection of
  half-spaces defined by linear inequalities $\sum_{i=1}^n a_i x_i \le c$
  with $a_i, c \in \Q$
  whose $n$-dimensional Lebesgue measure is positive and finite.
\end{defn}

%% (The last condition could be stated in various ways.  For example, positivity
%% of the volume is equivalent to the set containing a nonempty open subset of
%% $\R^n$, and under this assumption
%% finiteness is equivalent to the boundedness of $P$.)

\begin{myremark} We will assume that the volume of a rational polytope can be
  computed given the facets.  For fixed dimension, this can be done in
  polynomial time \cite[p.~4033]{Enge2009}; in this paper we are always in
  $\R^3$.  By 
  \cite[Theorem 2]{tsuyumine}, as extended in
  Theorem~\ref{thm:tsuyumine-all-components},
  all components of Hilbert modular varieties are of general type for
  fields of degree greater than $6$, so in the application to Hilbert modular
  varieties we never need to go beyond $\R^6$ and the volume can be computed
  in polynomial time.
\end{myremark}

In order to find the integral points in a general (not necessarily rational)
polytope, we begin by finding a rational polytope that contains it.  This
can be done from crude bounds on the individual coordinates of every point
of the polytope.  More sophisticated algorithms exist and have been
implemented in software \cite{latte}, but we use a
simple method to find the integral points in a rational polytope:
\begin{alg}\label{alg:int-points-rat-poly}
  Given a rational polytope $P$, determine its integral points.
  \begin{enumerate}
  \item If $\dim P = 1$ the problem is trivial.
  \item For each coordinate $x_i$, determine lower and upper bounds
    $m_i, M_i$ for $x_i$.  If for any $i$ there is only one integer in
    $[m_i,M_i]$, then set $x_i$ to that integer and reduce the dimension by $1$.
  \item Choose the $i$ that maximizes $M_i-m_i$ and
    $c_i \in (m_i,M_i) \setminus \Z$ (in practice one chooses $c_i$ near
    $(m_i+M_i)/2$).  Apply the algorithm to the two polytopes
    $P \cap (x_i \ge \lceil c_i \rceil), P \cap (x_i \ge \lfloor c_i \rfloor)$.
  \end{enumerate}
\end{alg}

\begin{myremark} We have stated this for integral points, but the
  same algorithm can be used to determine the $P$-points of any rational
  lattice, since one can choose a rational change of coordinates taking such
  a lattice to $\Z^n$, and any such change takes a rational polytope to
  a rational polytope.
\end{myremark}

\begin{myremark}\label{rem:how-to-enumerate} Magma provides a built-in function
  to enumerate the integral points in a polytope; however, it is often slow.
  Empirically we have found that combining
  Algorithm~\ref{alg:int-points-rat-poly} with calls to
  the built-in function when the polytope is small enough gives better
  performance than either one does individually.
\end{myremark}

\subsection{Union of polytopes}
A second important problem is to compute the volume of a union of rational
polytopes $\cup_{i=1}^k P_i$.
We use the following algorithm.
\begin{alg}\label{alg:volume}
  Given a finite set of polytopes $P_1, \dots, P_k$, find
  $v = \vol(\cup_{i=1}^k P_i)$.
  \begin{enumerate}
  \item\label{item:initialize}
    Sort the $P_i$ in order of increasing volume to obtain a list
    $\PP_0 = \PP$.
    For each pair $(i,j)$
    with $i < j$, determine whether $P_i \subseteq P_j$, and if so remove
    $P_i$ from $\PP$.
    %(This is faster if one searches in decreasing order
    %of $j$ for each fixed $i$.)
    For each pair $(i,j)$ with $i < j$, determine whether
    $\vol(P_i \cap P_j) > 0$, and if so add it to the list $\LL$ of pairs.
  \item\label{item:while} While $\LL$ is nonempty, repeat the following steps:
    \begin{enumerate}
  \item\label{subitem:first} Choose an element $(i,j)$.
  \item Choose a face $F$ of $P_i$ that contains points of the interior of
    $P_j$ (if there is no such face then $(i,j) \notin \LL$).  Let $P_+, P_-$
    be the intersections of $P_j$ with the half-spaces defined by $F$.
  \item Remove $P_j$ from $\PP$ and all pairs containing $j$ from $\LL$.
  \item\label{subitem:contained}
    Determine whether $P_+, P_-$ are contained in any element of $\PP$.
  \item\label{subitem:last} For each $P \in \{P_+, P_-\}$ not contained in any element of $\PP$,
    add $P$ to $\PP$, and add all pairs $(i,\pm)$ such that $P_i \cap P$
    has positive volume to $\LL$.
    \end{enumerate}
  %\item Return to Step \ref{item:while}.
  \item Now that $\LL$ is empty, return $\sum_{P_i \in \PP} \vol P_i$.
  \end{enumerate}
\end{alg}

\begin{myremark}\label{rem:fiddle-with-polytopes}
  To determine whether $P_i \subseteq P_j$ in Step~\ref{item:initialize}
  and whether $P_\pm \subseteq P$ in
  Step~\ref{subitem:contained}, it suffices to determine whether
  $\vol P = \vol (P_\pm \cap P)$.  Since our polytopes are given by linear
  inequalities, we can intersect them simply by concatenating the lists of
  inequalities.  Magma handles these issues without difficulty.
\end{myremark}

\begin{thm} Algorithm \ref{alg:volume} terminates and returns the volume of
  $\cup_{i=1}^k P_i$.
\end{thm}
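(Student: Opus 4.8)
The plan is to follow the working set $\PP$ through the loop and maintain three invariants, from which both correctness and termination will fall out. The key invariant is that \emph{every} polytope that ever enters $\PP$ is cut out by hyperplanes drawn from the fixed finite set $\mathcal{A}$ of affine hulls of the facets of $P_1,\dots,P_k$: the original $P_i$ have this property by definition, and the pieces $P_\pm$ inherit the facet hyperplanes of $P_j$ together with the hyperplane spanned by the facet $F$ of $P_i$, which already lies in $\mathcal{A}$. The second invariant is that $\bigcup_{P\in\PP}P$ always equals the original union $U=\bigcup_{i=1}^k P_i$: this holds after Step~\ref{item:initialize}, since discarding $P_i\subseteq P_j$ changes nothing, and it persists in the while loop because $P_+\cup P_-=P_j$ and because a piece is discarded in Step~\ref{subitem:last} only when it lies in a polytope remaining in $\PP$. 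The third is a bookkeeping invariant: $\PP$ is an antichain under containment up to measure zero. Step~\ref{item:initialize} enforces this, and it persists because $P_\pm\subseteq P_j$, so no surviving polytope can come to contain a newly added piece without having already contained $P_j$, contradicting the antichain property before the cut.

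Granting these, correctness of the returned value is short. Let $\C$ be the finite set of full-dimensional open cells of the arrangement $\mathcal{A}$. Since each such cell avoids every hyperplane of $\mathcal{A}$, and each $P\in\PP$ is an intersection of closed half-spaces bounded by hyperplanes of $\mathcal{A}$, every cell either lies in $\operatorname{int}P$ or is disjoint from $P$; writing $S(P)\subseteq\C$ for the cells of the first kind, one has $\vol P=\sum_{c\in S(P)}\vol c$. The list $\LL$ is maintained to hold exactly the pairs of distinct polytopes of $\PP$ with positive-volume intersection, so when the loop exits all pairwise intersections have measure zero; by inclusion–exclusion every term beyond the first vanishes, giving $\vol U=\vol\big(\bigcup_{P\in\PP}P\big)=\sum_{P\in\PP}\vol P$, which is what the algorithm returns.

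It remains to justify that the loop is well defined and terminates. For well-definedness I would prove the lemma that if $(i,j)\in\LL$ then some facet $F$ of $P_i$ meets $\operatorname{int}P_j$, so the parenthetical alternative in Step~\ref{item:while} never triggers. Indeed, the antichain invariant gives $P_j\not\subseteq P_i$, so there is a point $y\in P_j\setminus P_i$, while $\vol(P_i\cap P_j)>0$ yields a point $x\in\operatorname{int}P_i\cap\operatorname{int}P_j$; the open segment $(x,y)$ lies in $\operatorname{int}P_j$ by convexity and crosses $\partial P_i$ at a point $w$ lying on some facet $F$, whence $w\in F\cap\operatorname{int}P_j$. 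For termination I would use the strictly decreasing monovariant
\[
  \Psi(\PP)=\sum_{P\in\PP}\lvert S(P)\rvert^2\in\Z_{\ge 0}.
\]
The hyperplane spanned by $F$ lies in $\mathcal{A}$ and passes through $\operatorname{int}P_j$, so it splits $S(P_j)$ into two nonempty parts $S(P_+)\sqcup S(P_-)$; hence $\lvert S(P_+)\rvert^2+\lvert S(P_-)\rvert^2=\lvert S(P_j)\rvert^2-2\lvert S(P_+)\rvert\,\lvert S(P_-)\rvert\le\lvert S(P_j)\rvert^2-2$. Removing $P_j$ and adding back at most the two pieces therefore drops $\Psi$ by at least $2$ (and by more if a piece is discarded in Step~\ref{subitem:last}), so the loop runs at most $\Psi(\PP_0)$ times.

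The crux—the step needing the most care—is the recognition that, although the cuts could in principle produce ever smaller polytopes, every hyperplane used for cutting already belongs to the finite arrangement $\mathcal{A}$, so the whole computation takes place inside the finite combinatorial structure of $\mathcal{A}$ and its cells $\C$. Pinning this down, via the antichain invariant and the facet-existence lemma it supports (which together guarantee that each cut genuinely subdivides $S(P_j)$ into two nonempty parts), is where the real work lies; once it is in place, the monovariant $\Psi$ makes termination immediate and the cell count makes the final volume identity transparent.
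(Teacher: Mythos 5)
Your proof is correct and shares the paper's skeleton --- both arguments rest on the observation that every polytope the algorithm ever handles is cut out by hyperplanes from the fixed finite arrangement spanned by the facets of $P_1,\dots,P_k$, and both finish by inclusion--exclusion once all pairwise overlaps have measure zero --- but your termination argument is genuinely different and sharper. The paper argues that each pass either increases $\#\PP$ or decreases the (finitely-valued) sum of pairwise overlap volumes, a somewhat delicate well-foundedness argument; you instead exhibit the single integer monovariant $\Psi(\PP)=\sum_P|S(P)|^2$ over cells of the arrangement and show it drops by at least $2$ per iteration, which yields an explicit bound $\Psi(\PP_0)$ on the number of loop passes rather than bare termination. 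You also supply the well-definedness lemma (that a facet of $P_i$ meeting $\operatorname{int}P_j$ always exists when $(i,j)\in\LL$), which the paper leaves implicit. One small repair: your justification for why $\PP$ remains an antichain is off in one direction. A surviving polytope $Q$ \emph{can} contain a newly created piece $P_+$ without ever having contained $P_j$ (take $Q$ to cover just the half of $P_j$ on one side of the cut); what actually preserves the invariant in that direction is the explicit containment test in Step~\ref{subitem:contained}, which discards such pieces before they enter $\PP$. Your argument via $P_\pm\subseteq P_j$ correctly handles only the other direction (a new piece cannot contain a surviving polytope). Since the invariant is true and is enforced by the algorithm, this is a flaw of justification rather than a gap in the result, but the sentence should be rewritten to cite the containment check.
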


\begin{proof} To prove that the algorithm terminates, let $\Pi$ be the set of
  polytopes obtained by intersecting an element of $\PP_0$ with a collection
  of half-spaces determined by the facets of elements of $\PP_0$.
  The set $\Pi$ is finite and every polyhedron considered in the course
  of the algorithm belongs to $\Pi$.  Further, every pass through the while-loop
  (Steps \ref{subitem:first}--\ref{subitem:last})
  either increases the number of elements of $\PP$ (if neither $P_+$ nor
  $P_-$ is contained in any other element of $\PP$) or decreases the sum of
  volumes of pairwise overlaps (otherwise).  Again, the order of $\PP$ is
  bounded by $|\Pi|$
  and the set of possible sums of volumes of pairwise overlaps is
  finite, being a subset of the set of sums of subsets of the intersections
  of pairs of elements of $\Pi$.  Thus the algorithm must
  terminate.

  We now show that the algorithm is correct.
  It is clear that a pass through the loop cannot alter
  $\cup_{P_i \in \PP} P_i$, so the volume does not change, and that when $\LL$
  is empty the intersection of any two elements of $\PP$ has volume $0$.
  Thus by inclusion-exclusion the volume is $\sum_{P_i \in \PP} \vol P_i$ at
  that point.
\end{proof}    

\begin{myremark}\label{rem:not-so-bad}
  The upper bound on running time implied by an explicit version of the
  argument for termination given just above is
  horrifyingly large.  However, in
  practice the running time seems quite reasonable:
  see Section~\ref{sec:results-level-1} for some examples.
\end{myremark}

\section{Trace-minimal elements and reducers}\label{sec:trace-min-reducer}
In this section we present the geometric and combinatorial results that
underlie our estimates for the defect of the cusp singularities of a
Hilbert modular variety, in particular a threefold.
% do we need to think about elliptic singularities too?  See T-V, etc.
% not for fields other than Q(zeta_7)^+, Q(zeta_9)^+
We use three basic concepts in the geometry of the ring of integers in a
totally real number field, which we introduce here.

\begin{defn}\label{def:tmin-bal-red}
  Let $K$ be a totally real number field and let $d = [K:\Q]$;
  we write $x \gg 0$ to mean that $x$ is totally positive and we
  write $K_+$ for $\{x \in K: x \gg 0\}$.  Let
  $U_+ = \{x \in \O_K^\times: x \gg 0\}$ be the group of totally positive units
  of $K$.  Given $x \in K$, we say that $x$ is {\em trace-minimal} if
  $\tr ux \ge \tr x$ for all $u \in U_+$.

  Let $b \in \R^+$, and for $x \in K_+$, let $x_1, \dots, x_d$ be the real
  embeddings of $x$.  If $0 < \max_i x_i/\min_i x_i \le b$ we say that $x$ is
  {\em $b$-balanced}.

  Finally, if $I$ is a fractional ideal and
  $x \gg 0 \in I$ has the property that $\tr xy < \min (\Q^+ \cap I) \tr y$
  for some trace-minimal $y \in K$, then $x$ is a {\em reducer} relative
  to $I$, or an {\em $I$-reducer}.  We abbreviate $\min (\Q^+ \cap I)$ to
  $\min I$.
  %% If the inequality is strict we speak of
  %% a {\em strict $I$-reducer}.
  When $I = \O_K$ we refer simply to a {\em reducer}.  The set of
  $I$-reducers will be denoted $\RR_I$.  We also denote
  $\RR_I \cup \{\min I\}$ by $\RR_I'$.
  %%, and the set of strict $I$-reducers by $\RR_I'$.
\end{defn}

\begin{ex}\label{ex:reducer}
  In $\Q(\sqrt 7)$, the equality $(3-\sqrt 7)(3+\sqrt 7) = 2$ shows
  that $3 \pm \sqrt 7$ are reducers.  In Remark~\ref{rem:which-larger-fields}
  we showed that $\frac{9\pm\sqrt{65}}{2}$ are reducers in $\Q(\sqrt{65})$.
\end{ex}

\begin{myremark}\label{rem:reducer-basic}
  Not all ideals have reducers.  In particular,
  the key property of the fields considered by Thomas-Vasquez and Grundman in
  \cite{tv-rcs,gr-dcs,gr-ds} is that $(1)$ has no reducers.  In general,
  the larger the fundamental units of the number field, the less the units
  are capable of reducing its totally positive elements, and the more reducers
  it will have.  For example,
  in $\Q(\sqrt{46})$, where the fundamental unit is $24335 + 3588 \sqrt{46}$,
  there are $3542$ reducers.  Intuitively this is because
  a field with large fundamental units has a large fundamental domain for their
  action on the positive orthant, which allows for many elements of small trace
  but not too small norm which are divisible only by elements of larger trace.
  %% In this paper we
  %% do not need reducers that are not strict other than $\min I$.
  % maybe I should just get rid of them, but it seems more functorial this way.
  % I got rid of them, partly to save space.
  Note that if $r \in \Q^+$ then $\RR_{rI} = r\RR_I$. %and $\RR_{rI}' = r\RR_I'$.
\end{myremark}

%% \begin{ex}\label{ex:strict-reducer}
%%   The simplest example of a strict reducer is
%%   $x = 3 - \sqrt{7} \in \Q(\sqrt 7)$.  Let $y = 3 + \sqrt 7$: then
%%   $\tr xy = 4 < 6 = \tr y$.  To see that $y$ is trace-minimal, note that
%%   the fundamental unit is $8 + 3\sqrt 7$, and clearly
%%   $\tr((3+\sqrt 7)(8+3\sqrt 7)^k) \ge \tr(3+\sqrt 7) = 6$ for $k \ge 0$.  For
%%   $k < 0$ note that $(3+\sqrt 7)(8+3\sqrt 7)^k = (3-\sqrt 7)(8-3\sqrt 7)^{1-k}$,
%%   so again the trace is at least that of $3-\sqrt 7$, which is $6$.
%% \end{ex}

%% \begin{myremark} The cusp singularities for the
%%   cubic fields considered by Thomas-Vasquez and Grundman in
%%   \cite{tv-rcs,gr-dcs,gr-ds} 
%%   have particularly simple resolutions.  From
%%   our point of view these fields are distinguished by the property of having
%%   no strict reducers for narrowly principal ideals,
%%   which greatly simplifies the calculations needed to evaluate
%%   the defects.  It is possible that the fields they consider are the only cubic
%%   fields with this property, but we do not know how to prove this.
%% \end{myremark}

\subsection{Trace-minimal and balanced elements}\label{sec:trace-minimal-balanced}
We now prove some basic properties of these definitions.  Some of these
results, in particular Lemma~\ref{lem:trace-min-tot-pos},
Lemma~\ref{lem:trace-min-finite},
Corollary~\ref{cor:trace-min-finite-without-tp}, are essentially contained in
\cite[Lemma 3]{shintani}, and our proofs are closely related as well.

\begin{defn}\label{def:tot-pos-one}
  For this entire section, let us fix a set $S_\pm$ of units of $\O_K$ in
  bijection with the set
  of proper nonempty subsets of the set of real places of $K$ such that
  the unit $u_R$ corresponding to a subset $R$ is totally positive and
  greater than $1$ exactly at the places in $R$.
  In particular, for $1 \le i \le d$ let $\iu \in S_\pm$ be 
  such that $\iu_j > 1$ if and only if $j = i$ (recall that $\iu_j$ refers
  to the $j$th real embedding of $\iu$).
  In addition, let $S_d = \{v_1,\dots,v_d\}$, where $v_i$ is a unit that is
  totally positive and greater than $d$ in the $i$th real embedding.
\end{defn}

\begin{myremark}\label{rem:find-iu}
  The set $S_\pm$ can be found by standard lattice techniques.  In the
  log embedding of the units, we are looking for units with given negative
  coordinates.  One way to find such a unit is simply to enumerate
  units up to a given Euclidean norm in this embedding until the desired one
  is found.  The argument does not depend on the choice, but the bounds will
  be better if the $\iu_i$ are as small as possible.  Constructing $S_d$
  is easy.
\end{myremark}

\begin{lem}\label{lem:trace-min-tot-pos}
  A trace-minimal element is totally positive or $0$.
\end{lem}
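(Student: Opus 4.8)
The plan is to argue by contradiction, exploiting the explicit units fixed in Definition~\ref{def:tot-pos-one}. Writing $x_1,\dots,x_d$ for the real embeddings of $x$, the trace-minimality hypothesis says exactly that $\sum_{j=1}^d u_j x_j \ge \sum_{j=1}^d x_j$ for every $u \in U_+$. The point is that the units in $S_\pm$ isolate a single embedding: for each index $i$ the unit $\iu$ is totally positive with $\iu_i > 1$ and $0 < \iu_j < 1$ for $j \ne i$ (the strict inequality $\iu_j < 1$ holds because $\pi_j$ is injective, so $\iu_j = 1$ would force $\iu = 1$). Consequently its powers satisfy $\iu_i^{\,n} \to \infty$ and $\iu_j^{\,n} \to 0$ as $n \to \infty$.

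First I would suppose, for contradiction, that some embedding of $x$ is negative, say $x_k < 0$. Setting $w = {}_k u$ for brevity and applying the trace-minimality inequality to the totally positive units $w^n$, I compute
\[
  \tr(w^n x) = w_k^{\,n} x_k + \sum_{j \ne k} w_j^{\,n} x_j .
\]
As $n \to \infty$ the first term tends to $-\infty$ (since $x_k < 0$ and $w_k > 1$), while each remaining term tends to $0$ (since $0 < w_j < 1$); hence $\tr(w^n x) \to -\infty$, and in particular $\tr(w^n x) < \tr x$ for all sufficiently large $n$. This contradicts trace-minimality, so no embedding of $x$ can be negative, i.e.\ $x_j \ge 0$ for all $j$.

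To conclude, I would invoke the injectivity of each $\pi_j \colon K \hookrightarrow \R$: if $x_j = 0$ for even one index $j$, then $x = 0$. Therefore either $x = 0$, or every $x_j$ is strictly positive and $x$ is totally positive. This is precisely the claimed dichotomy.

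I do not anticipate a real obstacle here; the only step meriting attention is the legitimacy of the units $\iu$. Their existence is guaranteed by Dirichlet's unit theorem — the log images of $U_+$ form a full-rank lattice in the trace-zero hyperplane, which necessarily meets the open cone cut out by the sign conditions defining $\iu$ — and this is already recorded in Definition~\ref{def:tot-pos-one} together with Remark~\ref{rem:find-iu}, so I may take it for granted.
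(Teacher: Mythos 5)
Your proof is correct. The only facts you use — that each $\iu$ exists, is totally positive, and satisfies $\iu_i>1$, $0<\iu_j<1$ for $j\ne i$ — are exactly what Definition~\ref{def:tot-pos-one} supplies, and your observation that $\iu_j=1$ is impossible (injectivity of $\pi_j$) correctly sharpens ``at most $1$'' to ``strictly less than $1$.'' Since $U_+$ is a group, the powers $w^n$ are legitimate test units, and the limit computation $\tr(w^nx)\to-\infty$ when some $x_k<0$ does contradict trace-minimality. The final reduction from ``totally nonnegative'' to ``totally positive or $0$'' via injectivity of an embedding is also right.

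The route differs slightly from the paper's. The paper avoids any limiting argument by exhibiting a \emph{single} explicit unit that reduces the trace in one step: if $x$ is totally negative it uses $v_i\in S_d$ (a unit exceeding $d$ at the place where $|x_i|$ is maximal, giving $\tr(v_ix)<dx_i\le\tr x$), and if $x$ has mixed signs it uses $u_N\in S_\pm$ where $N$ is the set of places at which $x$ is negative, so that every summand of $\tr(u_Nx)$ strictly decreases. Your argument instead takes powers of one of the $\iu$ and lets $n\to\infty$, which handles the totally negative and mixed cases uniformly and needs only the single-place units $\iu$ rather than the full sets $S_\pm$ and $S_d$. The trade-off is that the paper's one-shot reductions by elements of a fixed finite set are the pattern reused in Lemmas~\ref{lem:trace-min-balanced} and \ref{lem:trace-min-finite} to get an effective, algorithmically usable description of the trace-minimal cone, whereas your asymptotic version proves the lemma but would not by itself furnish such a finite reducing set. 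For the statement at hand, both arguments are complete.
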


\begin{proof} 
  Let $x \in K$.  If $x$ is totally negative, choose $i$
  such that $|x_i|$ is maximal; then $\tr(v_i x) < dx_i < \tr x$.
  If $x$ is positive in some real embeddings and negative in others,
  let $u_N$ be as in Definition~\ref{def:tot-pos-one}, where $N$ is the
  set of real embeddings where $x$ is negative.  Then $\tr u_Nx < \tr x$.
\end{proof}

\begin{notation}\label{not:bk} Let $b_K = \max_{i \ne j} \frac{\iu_i-1}{1-\iu_j}$.
\end{notation}

\begin{lem}\label{lem:trace-min-balanced}
  Every totally positive element of $K$ that is not $b_K$-balanced is
  reduced by one of the $\iu$.
\end{lem}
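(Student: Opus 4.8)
The plan is to prove the explicit statement that any such $x$ is reduced by the particular unit ${}_qu$ indexed by a place $q$ where $x$ attains its smallest embedding; throughout I read ``$u$ reduces $x$'' as $\tr(ux)<\tr x$, matching the usage in the proof of Lemma~\ref{lem:trace-min-tot-pos}. So I would fix indices $q,p$ with $x_q=\min_i x_i$ and $x_p=\max_i x_i$. The hypothesis that $x$ is not $b_K$-balanced says exactly that $x_p/x_q>b_K$.

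First I would check that $b_K\ge 1$, which forces $p\ne q$: since each $\iu$ is a totally positive unit we have $\prod_k\iu_k=1$, so $\log\iu_i=\sum_{k\ne i}\log(1/\iu_k)$; applying $e^t\ge 1+t$ at $t=\log\iu_i$ and $-\log s\ge 1-s$ at $s=\iu_k$ gives $\iu_i-1\ge\log\iu_i\ge\sum_{k\ne i}(1-\iu_k)\ge 1-\iu_j$ for every $j\ne i$, so each ratio $\frac{\iu_i-1}{1-\iu_j}\ge 1$ and hence $b_K\ge 1$. Consequently $x_p/x_q>b_K\ge 1$, so $x_p>x_q$ and $p\ne q$.

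Now I would write $w={}_qu$, the totally positive unit with $w_q>1$ and $w_j<1$ for $j\ne q$, and compute
\[
  \tr(wx)-\tr(x)=\sum_j (w_j-1)x_j=(w_q-1)x_q-\sum_{j\ne q}(1-w_j)x_j,
\]
in which the first term and every summand of the second are positive. The key step is to keep only the single summand indexed by $p$ (legitimate because $p\ne q$ and all summands are positive), giving $\sum_{j\ne q}(1-w_j)x_j\ge (1-w_p)x_p$. Since $\frac{w_q-1}{1-w_p}$ is literally one of the ratios in the maximum defining $b_K$ (the pair $i=q$, $j=p$), it is $\le b_K<x_p/x_q$; as $x_q>0$ and $1-w_p>0$, cross-multiplication yields the strict inequality $(1-w_p)x_p>(w_q-1)x_q$. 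Chaining, $\sum_{j\ne q}(1-w_j)x_j\ge(1-w_p)x_p>(w_q-1)x_q$, whence $\tr(wx)-\tr(x)<0$ and $w={}_qu$ reduces $x$.

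I do not expect a real obstacle here, as the argument is essentially forced once the right unit is found; the two points demanding care are the choice to scale \emph{up} the minimal embedding (rather than down the maximal one), and the observation that discarding all but the largest-embedding term of the ``savings'' sum still suffices precisely because $b_K$ is a maximum over \emph{all} index pairs, so the single ratio that appears is automatically bounded by $b_K$. Verifying $b_K\ge1$, needed only to separate the extremal indices, is the lone auxiliary fact I would insert.
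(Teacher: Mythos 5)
Your proof is correct and is essentially the paper's argument: both use the unit $\iu$ attached to the place where $x$ is minimal, discard all the positive "savings" terms except the one at the maximal place, and invoke the defining inequality $b_K\ge\frac{\iu_i-1}{1-\iu_j}$ (the paper just writes the computation as $\tr x-\tr(\iu x)>0$ rather than $\tr(\iu x)-\tr x<0$). Your auxiliary verification that $b_K\ge 1$, which guarantees the minimal and maximal places are distinct, is a small point the paper leaves implicit but is a legitimate and correct addition.
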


\begin{proof} Let $x \gg 0$ be trace-minimal.
  %By Lemma~\ref{lem:trace-min-tot-pos}, we know that $x \gg 0$.
  Suppose that $x_i$ is the smallest embedding of $x$ and $x_j$ is the largest.
  Then
  %% \begin{align*}
  %%   \tr x - \tr(\iu x) &= \sum_{k=1}^d (x - \iu x)_k \\
  %%   & \ge (x-\iu x)_i + (x-\iu x)_j\\
  %%   &= x_i(1-\iu_i) + x_j(1-\iu_j),\\
  %% \end{align*}
  \begin{equation*}
    \tr x - \tr(\iu x) = \sum_{k=1}^d (x - \iu x)_k
    \ge (x-\iu x)_i + (x-\iu x)_j
    = x_i(1-\iu_i) + x_j(1-\iu_j).
  \end{equation*}
  Now $b_K \ge \frac{\iu_i-1}{1-\iu_j}$, so if $\frac{x_j}{x_i} > b_K$ this is
  positive, and so $x$ is reduced by $\iu$.
\end{proof}

We now describe a finite set $S_K$ of units such that every totally positive
element that is not trace-minimal is reduced by one of them.  
\begin{lem}\label{lem:trace-min-finite}
  Let $b_K$ be the bound of Notation~\ref{not:bk}.  Let
  $S_K$ consist of the units $\iu$ of Definition~\ref{def:tot-pos-one},
  together with all totally positive units all of whose real embeddings
  are at most $db_K$.  If $x \gg 0 \in K$ satisfies $\tr ux \ge \tr x$
  for all $u \in S_K$, then $x$ is trace-minimal.
\end{lem}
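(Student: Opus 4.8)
The plan is to prove the statement in contrapositive spirit: assuming $\tr ux \ge \tr x$ for every $u \in S_K$, I will show that $\tr wx \ge \tr x$ holds for \emph{every} $w \in U_+$, which is exactly the definition of trace-minimality. No minimization argument is needed; it will suffice to show that any single reducer of $x$ already lies in the finite test set $S_K$.

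First I would exploit the units $\iu$, which belong to $S_K$ by construction. The hypothesis gives $\tr(\iu x) \ge \tr x$ for each $i$, so $x$ is reduced by none of the $\iu$. By the contrapositive of Lemma~\ref{lem:trace-min-balanced} (which asserts that a totally positive element failing to be $b_K$-balanced is reduced by some $\iu$), this forces $x$ to be $b_K$-balanced, that is, $\max_j x_j \le b_K \min_j x_j$.

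The crux is then to bound the real embeddings of an arbitrary trace-reducing unit. Suppose, for contradiction, that some $v \in U_+$ reduces $x$, so $\tr vx < \tr x$. For each index $i$ I would estimate, using the positivity of every term $v_j x_j$ together with the balancedness just established,
\[
  v_i x_i \le \tr vx \le \tr x \le d\max_j x_j \le d\, b_K \min_j x_j \le d\, b_K\, x_i ,
\]
and dividing through by $x_i > 0$ yields $v_i \le d\, b_K$. Hence every real embedding of $v$ is at most $d\, b_K$; since $v$ is moreover a totally positive unit, this places $v \in S_K$. But then the hypothesis forces $\tr vx \ge \tr x$, contradicting $\tr vx < \tr x$.

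Therefore no element of $U_+$ reduces $x$, so $x$ is trace-minimal. The only real content is the displayed chain of inequalities, whose single nontrivial input is the $b_K$-balancedness of $x$ delivered by Lemma~\ref{lem:trace-min-balanced}; I anticipate no genuine obstacle here. I would emphasize the conceptual point that drives the bound: because $x$ is itself balanced, its trace is comparable to $d\min_j x_j$, and any unit that does not increase $\tr x$ is thereby prevented from having a large embedding in any single place. This is precisely why the finite, explicitly boundable family $S_K$ already detects every possible reduction.
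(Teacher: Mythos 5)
Your proof is correct and is essentially the paper's own argument: both first use Lemma~\ref{lem:trace-min-balanced} to conclude that $x$ must be $b_K$-balanced, and then use the same chain of inequalities (the paper writes it in the direct form $\tr ux \ge db_K x_i \ge db_K x_{\min} \ge d x_{\max} \ge \tr x$ for a unit with some embedding exceeding $db_K$, you run it by contradiction) to show that any unit reducing a balanced element has all embeddings at most $db_K$ and hence lies in $S_K$. No substantive difference.
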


\begin{proof} If $x$ is not $b_K$-balanced, then by
  Lemma~\ref{lem:trace-min-balanced} it is reduced by one of the $\iu$, which
  belongs to $S_K$.   Let $u$ be a totally
  positive unit with $u_i > db_K$ for some~$i$.  
  If $x$ is $b_K$-balanced, we then have
  $\tr ux \ge db_K x_i \ge db_K x_{\min{}} \ge dx_{\max{}} \ge \tr x$, where
  $x_{\min{}}, x_{\max{}}$ are the smallest and largest real embeddings of $x$,
  respectively.  It follows that if $x$ is $b_K$-balanced and not
  trace-minimal, then $x$ is reduced by a unit whose real embeddings are
  at most $db_K$ and which therefore belongs to $S_K$.
\end{proof}

We can find the finitely many totally positive
units all of whose real embeddings are at most $db_K$: indeed, in
the Minkowski embedding $x \to (\log |x_i|)$, these correspond to lattice
points within a compact subset of the hyperplane $\sum_i r_i = 0$.
Thus Algorithm~\ref{alg:int-points-rat-poly} applies.

\begin{cor}\label{cor:trace-min-finite-without-tp}
  There is a finite set $S'_K$ of totally positive units such that every
  element $x \in K$ (totally positive or not) that is not trace-minimal
  satisfies $\tr ux < \tr x$ for some $u \in S'_K$.
\end{cor}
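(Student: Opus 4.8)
The plan is to assemble the desired set $S'_K$ from the finite sets of units already constructed, and then to dispose of an arbitrary non-trace-minimal $x$ by splitting into the totally positive and the non-totally-positive cases. Concretely, I would set $S'_K = S_K \cup S_\pm \cup S_d$, where $S_K$ is the finite set of totally positive units furnished by Lemma~\ref{lem:trace-min-finite}, and $S_\pm, S_d$ are the finite sets of totally positive units fixed in Definition~\ref{def:tot-pos-one}. Each of these three sets is finite and consists of totally positive units, hence so does their union, which settles the requirement that $S'_K$ be a finite set of totally positive units.

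Next I would verify that $S'_K$ reduces every $x$ that is not trace-minimal. Since $0$ is trace-minimal, such an $x$ is nonzero, and by Lemma~\ref{lem:trace-min-tot-pos} a trace-minimal element is totally positive or $0$; consequently any $x$ that is not totally positive is automatically not trace-minimal, and the two cases $x \gg 0$ versus $x$ not totally positive can be handled independently.

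In the totally positive case I would simply invoke the contrapositive of Lemma~\ref{lem:trace-min-finite}: if $x \gg 0$ is not trace-minimal then $\tr ux < \tr x$ for some $u \in S_K \subseteq S'_K$. In the remaining case $x$ has at least one negative embedding, and I would reuse the explicit reducers produced in the proof of Lemma~\ref{lem:trace-min-tot-pos}: a unit $v_i \in S_d$ when $x$ is totally negative (with $i$ chosen so that $|x_i|$ is maximal), or a unit $u_N \in S_\pm$ when $x$ has mixed signs (with $N$ the set of places where $x$ is negative), each satisfying $\tr ux < \tr x$. As $S_d, S_\pm \subseteq S'_K$, this completes the verification.

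There is essentially no real obstacle: the content is already contained in the preceding three results, and the only thing to confirm is the bookkeeping point that the units used to reduce non-totally-positive elements in Lemma~\ref{lem:trace-min-tot-pos} are themselves totally positive, which they are by their construction in Definition~\ref{def:tot-pos-one}, so that $S'_K$ may legitimately be taken to consist of totally positive units. The mild subtlety worth flagging is precisely that $S_K$ only controls totally positive $x$, which is why we must adjoin $S_\pm$ and $S_d$ to capture the sign-changing elements that Lemma~\ref{lem:trace-min-finite} does not address.
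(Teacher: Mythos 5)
Your proposal is correct and matches the paper's proof exactly: the paper also takes $S'_K = S_K \cup S_\pm \cup S_d$, relying on Lemma~\ref{lem:trace-min-tot-pos} for the non-totally-positive case and Lemma~\ref{lem:trace-min-finite} for the totally positive one. You simply spell out the case analysis that the paper leaves implicit.
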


\begin{proof} The sets $S_K$, $S_\pm$, and $S_d$
  are all finite.  We choose $S'_K$ to be their union.
\end{proof}

We extend the concept of trace-minimality to $K \otimes_\Q \R$.
If we use a basis for $K \otimes_\Q \R$ given by elements of $K$, then all of the
inequalities defining our polyhedra have coefficients in $\Q$, so
Algorithm~\ref{alg:int-points-rat-poly}
applies and we can always determine whether an integral point in
the enlarged rational polytope belongs to the original without the
possibility of error due to round-off.

\begin{cor}\label{cor:tm-is-polyhedral}
  The set of trace-minimal elements of $K \otimes \R$ is
  a polyhedral cone with finitely many rational faces.  Thus, for
  $q \in \Q^+$, the set of such elements with $\tr x \le q$ is a rational
  polytope.
\end{cor}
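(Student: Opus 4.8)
The plan is to exhibit the trace-minimal locus as a finite intersection of rational half-spaces through the origin, and then to cut it with the rational hyperplane $\tr x \le q$, checking boundedness and full-dimensionality so that the result qualifies as a rational polytope.

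First I would fix a $\Q$-basis $e_1,\dots,e_d$ of $K$ and use it as a coordinate system on $K\otimes_\Q\R$, writing $x=\sum_j c_j e_j$ with $c_j\in\R$. For each $u\in U_+$ multiplication by $u$ is an $\R$-linear endomorphism of $K\otimes\R$, and since $u e_j\in K$ we have $\tr(ux)=\sum_j c_j\,\tr(ue_j)$ with $\tr(ue_j)\in\Q$; likewise $\tr x=\sum_j c_j\,\tr e_j$ with rational coefficients. Hence the trace-minimality condition $\tr(ux)\ge\tr x$ becomes the rational linear inequality $\sum_j \tr\bigl((u-1)e_j\bigr)\,c_j\ge 0$, which defines a half-space through the origin with rational normal vector. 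This is exactly the point, flagged in the paragraph preceding the statement, that choosing coordinates from $K$ makes every defining inequality rational.

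Next I would reduce the a priori infinite family of inequalities (one per unit of $U_+$) to a finite one by invoking Corollary~\ref{cor:trace-min-finite-without-tp}: a point is trace-minimal if and only if $\tr(ux)\ge\tr x$ holds for every $u$ in the finite set $S'_K$. Therefore the trace-minimal locus $T$ equals $\bigcap_{u\in S'_K}\{x:\tr(ux)\ge\tr x\}$, a finite intersection of rational half-spaces through the origin, i.e.\ a polyhedral cone with finitely many rational faces; this is the first assertion. For the slice, $\tr x\le q$ is again a rational inequality because $q\in\Q^+$ and the $\tr e_j$ are rational, so $T\cap\{\tr x\le q\}$ is a rational polyhedron. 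To see it is bounded I would apply Lemma~\ref{lem:trace-min-tot-pos}: every trace-minimal element is totally positive or $0$, so $x_i\ge 0$ for all $i$, and together with $\sum_i x_i=\tr x\le q$ this forces $0\le x_i\le q$. Finally, to meet the positive-measure requirement in the definition of a rational polytope I would check that $T$ is full-dimensional by exhibiting an interior point: at $x=1=(1,\dots,1)$ the arithmetic--geometric mean inequality gives $\sum_i u_i\ge d\bigl(\prod_i u_i\bigr)^{1/d}=d=\tr 1$, with strict inequality whenever $u\ne 1$ (using $\prod_i u_i=1$ for a totally positive unit), so every inequality defining $T$ is strict at $1$; hence $1$ lies in the interior of $T$, the interior of $T$ is a nonempty open cone on which $\tr$ is positive, and for any $q>0$ the slice has positive (and finite) measure.

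The one point that genuinely needs justification, rather than the cited lemmas, is the transfer of the finiteness statements from $K$ to $K\otimes_\Q\R$. This is where I would be careful: the proofs of Lemmas~\ref{lem:trace-min-balanced} and~\ref{lem:trace-min-finite} and of Corollary~\ref{cor:trace-min-finite-without-tp} refer only to the real-embedding values $x_i$ and to the diagonal action of units, all of which are defined verbatim for arbitrary $x\in K\otimes\R$; so the same finite set $S'_K$ certifies trace-minimality over the reals, and no essentially new argument is required.
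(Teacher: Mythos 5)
Your proposal is correct and follows essentially the same route as the paper: the paper's proof simply observes that each $u \in S'_K$ from Corollary~\ref{cor:trace-min-finite-without-tp} defines a half-space $\tr ux \ge \tr x$ whose finite intersection is the trace-minimal cone, with rationality coming from choosing coordinates in $K$. Your additional verifications of boundedness (via Lemma~\ref{lem:trace-min-tot-pos}) and full-dimensionality (via AM--GM at $x=1$) are sound supplementary details that the paper leaves implicit.
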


\begin{proof} Every element $u \in S'_K$
  (Corollary~\ref{cor:trace-min-finite-without-tp})
  defines a half-space by $\tr ux \ge \tr x$, and the intersection of these
  is the trace-minimal cone.
\end{proof}

\begin{myremark} In fact the trace-minimal cone is defined by
  $\tr ux \ge \tr x$ for $u \in S_K$.  To see this, note that the two cones
  have the same intersection with the totally positive cone.  However, every
  nonzero point of the trace-minimal cone is interior to the totally
  positive cone by Lemma~\ref{lem:trace-min-balanced}.  It follows that the
  two cones are equal.
\end{myremark}

\begin{myremark} Let $V \subseteq \otp$ be a subgroup of finite index,
  and say that $x \in K_+$ is {\em $V$-trace-minimal} if
  $\tr vx \ge \tr x$ for all $v \in V$.  The results and proofs of this
  section extend to the slightly more general situation where ``trace-minimal''
  is replaced by ``$V$-trace-minimal''; for simplicity we do
  not state these explicitly here.
\end{myremark}

We summarize the discussion of this section in an algorithm.
\begin{alg}\label{alg:trace-minimal}
  Let $V \subseteq \otp$ be a subgroup of finite index, where $K$ is a
  totally real number field.  Determine the
  $V$-trace-minimal cone.
  \begin{enumerate}
  \item For each real place $R_i$ of $K$, determine a unit $\iv \in V$ greater
    than $1$ at $R_i$ and less than $1$ at all other real places (see
    Remark~\ref{rem:find-iu}).  Let $U_V$ be the set of these
    units.  Calculate a constant
    $b_{K,V}$ as in Notation~\ref{not:bk}.
  \item Let $S_V$ be the subset of $V$ consisting of units less than or
    equal to $db_{K,V}$ at all real places.  Again, this is a standard lattice
    calculation.
  \item Return $\cap_{u \in S_V \cup U_V} H_u$, where $H_u$ is the half-space
    defined by $\tr xu \ge \tr x$.
  \end{enumerate}
\end{alg}

\subsection{Reducers}\label{sec:reducers}
We now consider the problem of determining the complete set of $I$-reducers in
a totally real number field.

\begin{lem}\label{lem:reducers-finite}
  Let $b_K$ be the bound of Notation~\ref{not:bk}.
  Every $I$-reducer is at most $(\min I) db_K$ at all real places.
\end{lem}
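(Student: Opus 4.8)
The plan is to extract the bound on each real embedding of an $I$-reducer $x$ directly from the single trace inequality that witnesses it, exploiting the balancedness that trace-minimality forces on the auxiliary element $y$.

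Let $x$ be an $I$-reducer, so $x \gg 0 \in I$ and there is a trace-minimal $y \in K$ with $\tr(xy) < (\min I)\tr y$. First I would record two facts about $y$. Since $y = 0$ would make the defining inequality read $0 < 0$, we have $y \ne 0$, so by Lemma~\ref{lem:trace-min-tot-pos} the element $y$ is totally positive. Next I would argue that $y$ is $b_K$-balanced: each $\iu$ of Definition~\ref{def:tot-pos-one} is a totally positive unit, so trace-minimality gives $\tr(\iu y) \ge \tr y$ for every $i$, i.e.\ $y$ is reduced by none of the $\iu$; the contrapositive of Lemma~\ref{lem:trace-min-balanced} (applicable because $y \gg 0$) then forces $\max_k y_k / \min_k y_k \le b_K$.

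With these in hand the estimate is immediate. Fix a real place indexed by $m$, and write $y_{\max} = \max_k y_k$, $y_{\min} = \min_k y_k > 0$. Since all the terms $x_k y_k$ are positive, the single term $x_m y_m$ is dominated by the full trace, so
\[
  x_m y_m \le \tr(xy) < (\min I)\,\tr y \le (\min I)\, d\, y_{\max},
\]
using $\tr y = \sum_k y_k \le d\,y_{\max}$. Dividing by $y_m>0$ and then using $y_m \ge y_{\min}$ together with the balancedness bound gives
\[
  x_m < (\min I)\, d\, \frac{y_{\max}}{y_m} \le (\min I)\, d\, \frac{y_{\max}}{y_{\min}} \le (\min I)\, d\, b_K .
\]
As $m$ was arbitrary, every real embedding of $x$ is at most $(\min I)\,d\,b_K$.

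I do not anticipate a genuine obstacle, as the whole argument is a one-term domination followed by two elementary inequalities. The only step requiring real care is the deduction that $y$ is $b_K$-balanced: this is precisely where trace-minimality of $y$ is used in an essential way rather than mere total positivity, since without a bound on the ratio $y_{\max}/y_{\min}$ one cannot pass from the dominating term $x_m y_m$ back to a bound on $x_m$ itself.
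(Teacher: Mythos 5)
Your proof is correct and is essentially the paper's argument: the paper simply says the lemma "follows from the same argument" as Lemma~\ref{lem:trace-min-finite}, and what you have written is exactly that argument spelled out — the witness $y$ is totally positive and $b_K$-balanced because trace-minimality prevents reduction by any $\iu$, and then the one-term domination $x_m y_m \le \tr(xy) < (\min I)\tr y \le (\min I)\,d\,y_{\max}$ gives the bound. No gaps.
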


\begin{proof} This follows from the same argument that we used in
  Lemma~\ref{lem:trace-min-finite}.
  %%to show that a $b_K$-balanced element
  %%cannot be reduced by a unit that is greater than $db_K$ at any real place.
\end{proof}

Lemma \ref{lem:reducers-finite}
allows one to compute a finite subset of $K$ that contains
all the reducers via Algorithm~\ref{alg:int-points-rat-poly},
but it does not give a good bound, nor does it allow us
to determine whether an element is in fact a reducer.  Thus we analyze the
situation more closely.

\begin{lem}\label{lem:reduce-extremal-ray}
  Let $M_K$ be the trace-minimal cone inside $K \otimes \R$, and let its
  extremal rays be $\R^+ v_i$ for $1 \le i \le m$, where $v_i \in K$.
  An element $x \in I$ is an $I$-reducer if and only if
  $\tr xv_i < \min I \tr v_i$ for some $i$.
\end{lem}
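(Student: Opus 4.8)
The plan is to reformulate the reducer condition for a fixed $x$ as the nonnegativity of a single linear functional on the trace-minimal cone, and then invoke the elementary fact that a linear functional takes a negative value on a polyhedral cone exactly when it does so at one of the cone's extremal generators.

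First I would fix $x \gg 0 \in I$ and define $L_x(y) = \tr xy - \min I \cdot \tr y$ on $K \otimes \R$, which is linear in $y$ since both $y \mapsto \tr xy$ and $y \mapsto \tr y$ are. By Definition~\ref{def:tmin-bal-red}, $x$ is an $I$-reducer precisely when $L_x(y) < 0$ for some trace-minimal $y \in K$. Any such $y$ lies automatically in the trace-minimal cone $M_K \subseteq K \otimes \R$, so the reducer condition says exactly that $L_x$ is negative somewhere on $M_K$. Conversely, each $v_i$ is a point of $K$ lying on an extremal ray of $M_K$, hence is trace-minimal; by Lemma~\ref{lem:trace-min-tot-pos} and the fact that $M_K$ sits inside the totally positive cone, $v_i \gg 0$, so $v_i$ is a legitimate witness in the definition of reducer.

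Next I would apply Corollary~\ref{cor:tm-is-polyhedral}, which gives that $M_K$ is a polyhedral cone with finitely many rational faces; thus $M_K = \{\sum_{i=1}^m c_i v_i : c_i \ge 0\}$ is generated by its extremal rays $\R^+ v_i$. Writing $y = \sum_{i=1}^m c_i v_i \in M_K$ with $c_i \ge 0$ and using linearity gives $L_x(y) = \sum_{i=1}^m c_i L_x(v_i)$. If $L_x(v_i) \ge 0$ for every $i$, then $L_x(y) \ge 0$ for all $y \in M_K$, so $x$ is not a reducer; contrapositively, if $x$ is a reducer then $L_x(v_i) < 0$ for some $i$. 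For the reverse implication, if $L_x(v_i) < 0$ for some $i$, then $v_i$ itself serves as a trace-minimal witness and $x$ is a reducer. Unwinding $L_x(v_i) < 0$ to $\tr x v_i < \min I \, \tr v_i$ yields the stated equivalence.

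There is no deep obstacle here beyond bookkeeping: the entire content is the convex-geometry fact that a linear functional attains negative values on a cone iff it does so at an extremal generator, together with the reduction of $M_K$ to its generators supplied by Corollary~\ref{cor:tm-is-polyhedral}. The two points requiring genuine care are (i) checking that each $v_i$, as a point of $K$ on an extremal ray of $M_K$, really is trace-minimal and totally positive so that it qualifies as a witness in the definition of $I$-reducer, and (ii) reconciling the ``real'' notion of trace-minimality attached to $M_K \subseteq K \otimes \R$ with the rational notion ($y \in K$) in Definition~\ref{def:tmin-bal-red} — both of which are handled by Corollary~\ref{cor:tm-is-polyhedral} and Lemma~\ref{lem:trace-min-tot-pos}.
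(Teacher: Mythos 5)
Your proposal is correct and follows essentially the same route as the paper: the ``if'' direction uses that each $v_i$ is a trace-minimal (and totally positive) element of $K$, hence a valid witness, and the ``only if'' direction writes a trace-minimal witness $y$ as $\sum_i c_i v_i$ with $c_i \ge 0$ and uses linearity of the trace to conclude that some $v_i$ must already violate the inequality. Your packaging via the linear functional $L_x$ is just a mild rephrasing of the paper's direct computation.
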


\begin{proof}
  The $v_i$ are totally positive, so ``if'' is just the
  definition.  For ``only if'', suppose that $\tr xy < \min I \tr y$ with $y$
  trace-minimal; by definition we have $y = \sum_i c_i v_i$ with $c_i \ge 0$.
  Thus $\tr xy = \sum_i c_i \tr xv_i$.  If $\tr xv_i \ge \min I \tr v_i$
  for all $i$, then
  \begin{equation*}
    \min I \tr y > \tr xy = \sum_i c_i \tr xv_i \ge \min I \sum_c c_i \tr v_i = \min I \tr y.
  \end{equation*}
  This contradiction establishes ``only if''.
\end{proof}

\begin{lem}\label{lem:reducers-simplex}
  Given $v \gg 0$, the elements $x \gg 0 \in I$ with $\tr xv < \min I \tr v$
  are naturally in correspondence with the integral points of a simplex.
\end{lem}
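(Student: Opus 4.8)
The plan is to pass to the real embedding, where all three conditions become linear inequalities, and then to recognize the resulting region as a simplex. First I would fix a $\Z$-basis $e_1,\dots,e_d$ of the fractional ideal $I$, which identifies $I$ with $\Z^d$ via $x = \sum_j n_j e_j \mapsto (n_1,\dots,n_d)$; this bijection is the ``natural correspondence'' in the statement. Writing $x_i = \pi_i(x)$ for the $i$th real embedding (Notation~\ref{not:embeddings}), the hypothesis $x \gg 0$ becomes the $d$ strict inequalities $x_i > 0$, and, since $\tr(xv) = \sum_{i=1}^d x_i v_i$ with $v \gg 0$, the hypothesis $\tr(xv) < \min I \,\tr v$ becomes the single strict inequality $\sum_{i=1}^d v_i x_i < c$ where $c := \min I \cdot \tr v > 0$ and every coefficient $v_i > 0$. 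Each $x_i$, and hence the left-hand side of the last inequality, is a linear form in the $n_j$ with real coefficients.

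Next I would check that the region cut out in embedding coordinates is a genuine $d$-simplex. Its closure is $\{x : x_i \ge 0,\ \sum_i v_i x_i \le c\}$, the convex hull of the origin together with the $d$ axis intercepts $p_i = (c/v_i)\,e_i$. Because every $v_i$ is positive, the $p_i$ are positive multiples of the standard basis vectors and so are linearly independent; hence $0, p_1,\dots,p_d$ are affinely independent and their convex hull is a $d$-simplex. In particular the region is bounded, since $x_i > 0$ together with $\sum_i v_i x_i < c$ forces $0 < x_i < c/v_i$. Transporting everything through the invertible linear change of coordinates from embedding coordinates to the basis coordinates $(n_j)$ carries this simplex to a simplex in $\R^d$ (invertible linear maps send affinely independent points to affinely independent points), and the elements $x$ in question correspond exactly to the $\Z^d$-points lying in it.

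The argument is essentially bookkeeping, so I do not expect a serious obstacle; the two points deserving a word of care are the affine independence of the vertices, which is precisely where total positivity of $v$ enters and which guarantees that we obtain a bounded simplex rather than a degenerate or unbounded region, and the distinction between the open and closed simplex. For the latter I would note that the strict inequalities mean we count the $\Z^d$-points in the interior; since no $x \in I$ with $x \gg 0$ lies on a coordinate hyperplane, these are exactly the integral points of the closed simplex with the facet $\sum_i v_i x_i = c$ removed, which is the form in which the lemma is used together with Lemma~\ref{lem:reduce-extremal-ray} and Algorithm~\ref{alg:int-points-rat-poly}. Finally, the simplex is generally not rational because the $v_i$ are typically irrational, but this is immaterial for the enumeration, which only requires enclosing it in a rational polytope.
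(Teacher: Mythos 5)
Your proof is correct and takes essentially the same route as the paper: pass to the real embedding, observe that $x_i>0$ together with $\sum_i v_i x_i < \min I\,\tr v$ cuts out a simplex (bounded because $v\gg 0$), and change coordinates so that the lattice given by $I$ becomes $\Z^d$. You simply spell out the affine independence of the vertices and the open-versus-closed issue, which the paper leaves implicit.
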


\begin{proof} The set of such $x$ is the set of integral points of the
  region in $\R^d$ defined by $x_i \ge 0$ for all $i$ and
  $\sum_{i=1}^d (x_i-\min I)v_i = 0$ that belong to the sublattice of $\Z^d$ defined
  by $I$.  By changing coordinates we convert this sublattice to the standard
  one.
\end{proof}

Thus we may use Algorithm~\ref{alg:int-points-rat-poly} to list the reducers
efficiently.  Again we summarize the discussion in an algorithm.

\begin{alg}\label{alg:reducers}
  Given a fractional ideal $M$, determine the $M$-reducers.
  \begin{enumerate}
  \item Determine the trace-minimal cone (Algorithm~\ref{alg:trace-minimal}).
  \item For each extremal ray $v\R^+$ of the trace-minimal cone, choose a
    representative $v \in K$ and determine the set
    $R_v = \{x \in M_+: \tr xv < \min M \tr v\}$
    %(strict inequality for strict reducers)
    as in Lemma~\ref{lem:reducers-simplex}.
  \item The answer is $\cup_v R_v$.
  \end{enumerate}
\end{alg}

%% \subsection{Level structure}
%% In addition to Hilbert modular varieties at level $1$, we would also like
%% to study those of higher level with geometric genus $0$ or $1$, as in
%% \cite{hamahata,hmf-mult,logan} among others.

%% \begin{prop}\label{prop:grundman-general-type}
%% \end{prop}

\section{Algorithms for defects}\label{sec:defect-algs}
We now consider two closely related problems concerned with counting elements
with multiples of bounded trace.  
Thus let $M$ be a lattice in $K$ and
$V \subseteq \otp$ be a subgroup of finite index preserving
$M$ (by Lemma~\ref{lem:group-preserved} such a subgroup exists).
We recall the notation from Definition~\ref{def:knoller-notation}
and the fact \cite[Satz 2.4]{knoller} that the $q$th defect of a cusp of type
$(M,V)$ is $\delta_{M,V}(q) = \#\Lambda_q(M)/V$.
Our problems are as follows:

\begin{problem}\label{prob:compute-one-delta}
  Given $M, V, q$, compute $\delta_{M,V}(q)$.
\end{problem}

\begin{problem}\label{prob:compute-all-delta}
  Given $M, V$, give an asymptotic formula for $\delta_{M,V}(q)$.
\end{problem}

For the first of these problems, 
most of the ideas of Section~\ref{sec:trace-min-reducer}
are not necessary.  By rescaling
we may assume that $M \subseteq \O_K$.  For simplicity, and because it is
the only case needed in this paper by \cite[Proposition 3.3.8]{hmf-mult},
we assume that $M$ is an ideal of $\O_K$.

\begin{lem}\label{lem:which-m}
  Let $\dif_K$ be the different ideal of $\O_K$.  Let $t \in \dif^{-1}_K$ and
  let $x \ne 0 \in \hat M$.  Define $I_1, I_2$ by
  $(x) = \dif^{-1}_KM^{-1} I_1$, $(t) = \dif^{-1}_K I_2$.
  Then there exists $m \in M$ with $xm = t$ if and only if $I_1|I_2$.
\end{lem}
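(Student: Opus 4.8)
The plan is to reduce the assertion to a single containment of fractional ideals, using the standard identification of the trace dual with the inverse different. First I would recall that for an ideal $M \subseteq \O_K$ the trace dual appearing in Definition~\ref{def:knoller-notation} satisfies $\hat M = \dif_K^{-1} M^{-1}$; this is the defining property of the different $\dif_K$ combined with the fact that duality multiplies inversely under the $\O_K$-module structure. Consequently $x \in \hat M$ means exactly $(x) \subseteq \dif_K^{-1} M^{-1}$, so that $I_1 := (x)\dif_K M$ is an integral ideal, matching the factorization $(x) = \dif_K^{-1} M^{-1} I_1$ in the statement; in the same way $t \in \dif_K^{-1}$ forces $I_2 := (t)\dif_K$ to be integral, matching $(t) = \dif_K^{-1} I_2$.

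Next I would observe that, since $K$ is a field and $x \neq 0$, the equation $xm = t$ has the unique solution $m = t/x \in K$. Thus the statement ``there exists $m \in M$ with $xm = t$'' is literally the membership $t/x \in M$, and because $M$ is an $\O_K$-module this is equivalent to the containment of principal fractional ideals $(t/x) \subseteq M$.

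The heart of the argument is then a short computation in the ideal group. Dividing the two factorizations gives
\begin{equation*}
  (t/x) = (t)(x)^{-1} = \bigl(\dif_K^{-1} I_2\bigr)\bigl(\dif_K^{-1} M^{-1} I_1\bigr)^{-1} = M\, I_1^{-1} I_2 .
\end{equation*}
Hence $(t/x) \subseteq M$ holds if and only if $M\, I_1^{-1} I_2 \subseteq M$, and cancelling the invertible ideal $M$ (legitimate in the Dedekind domain $\O_K$) this is equivalent to $I_1^{-1} I_2 \subseteq \O_K$, i.e.\ to $I_1 \mid I_2$. This establishes both implications at once.

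I expect the only point genuinely requiring care to be the identity $\hat M = \dif_K^{-1} M^{-1}$ together with the bookkeeping that guarantees $I_1$ and $I_2$ are integral; once these are in place, everything reduces to formal manipulation of fractional ideals, using that in a Dedekind domain divisibility of ideals is the same as integrality of the corresponding quotient. No estimate or explicit computation is needed, so the lemma is essentially a translation of the divisibility condition into the language of the trace pairing.
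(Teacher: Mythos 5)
Your proof is correct and follows essentially the same route as the paper: reduce to the membership $t/x \in M$ via uniqueness of $m$, then compute $(x^{-1}t) = M I_1^{-1} I_2$ and read off the divisibility condition. The only difference is that you spell out the identification $\hat M = \dif_K^{-1}M^{-1}$ and the integrality of $I_1, I_2$, which the paper leaves implicit.
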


\begin{proof} Of course $m \in K$ with $xm = t$ is unique, so we need only
  determine whether $x^{-1} t \in M$.  Since $I_1, I_2$ are integral
  and $(x^{-1}t) = MI_2I_1^{-1}$, the result follows.
\end{proof}

\begin{lem}\label{lem:defect-of-subgroup}
  For all ideals $M$, all $q > 0$, and all $V$ of finite index in $\otp$,
  we have $\delta_{M,V}(q)-1 = [\otp:V] \left(\delta_{M,\otp}-1\right)$.
\end{lem}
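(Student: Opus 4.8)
The plan is to reduce everything to the combinatorial description of the defect in Theorem~\ref{thm:knoller-defect} and then carry out an elementary orbit count. First I would observe that since $M$ is an ideal of $\O_K$, every unit, and in particular every element of $\otp$, maps $M$ to itself, so both $V$ and the full group $\otp$ preserve $M$; hence each determines a cusp of type $(M,\cdot)$ in the sense of Definition~\ref{def:knoller-notation}, and Theorem~\ref{thm:knoller-defect} applies to give $\delta_{M,V}(q) = \#(\Lambda_q(M)/V)$ and $\delta_{M,\otp}(q) = \#(\Lambda_q(M)/\otp)$. The next step is to check that $\Lambda_q(M)$ really is stable under the $\otp$-action (hence under $V$): for $u \in \otp$ and $0 \ne x \in \Lambda_q(M)$, the identity $\tr((ux)n) = \tr(x(un))$ together with $uM = M$ shows $ux \in \hat M$; moreover $ux \gg 0$, and if $\tr(xm) < q$ with $m \in M_+$ then $\tr((ux)(u^{-1}m)) = \tr(xm) < q$ with $u^{-1}m \in M_+$, so $ux \in \Lambda_q(M)$.

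The heart of the argument is an orbit count on the nonzero part $\Lambda_q(M)^\ast := \Lambda_q(M) \setminus \{0\}$. The element $0$ is fixed by every unit, so it forms a single orbit under both $V$ and $\otp$; subtracting $1$ from each defect therefore counts orbits of nonzero elements:
\[
  \delta_{M,V}(q) - 1 = \#\!\left(\Lambda_q(M)^\ast/V\right), \qquad
  \delta_{M,\otp}(q) - 1 = \#\!\left(\Lambda_q(M)^\ast/\otp\right).
\]
I would then note that both groups act \emph{freely} on $\Lambda_q(M)^\ast$: if $ux = x$ with $x \ne 0$ inside the field $K$, then $u = 1$, so every nonzero element has trivial stabilizer.

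Given freeness, I would invoke the standard fact that for a free action of a group $G$ with a finite-index subgroup $H$, each $G$-orbit is isomorphic as a $G$-set to $G$ acting on itself by translation, and so splits into exactly $[G:H]$ distinct $H$-orbits, corresponding to the cosets (which, as $\otp$ is abelian, number $[\otp:V]$ irrespective of side). Applying this with $G = \otp$, $H = V$, and $X = \Lambda_q(M)^\ast$ yields $\#(\Lambda_q(M)^\ast/V) = [\otp:V]\,\#(\Lambda_q(M)^\ast/\otp)$, which is precisely the claimed identity after substituting the two displayed expressions.

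The main obstacle, and really the only point requiring genuine care, is the freeness of the action: it is exactly what makes the splitting of each (infinite) $\otp$-orbit into precisely $[\otp:V]$ many $V$-orbits uniform over all orbits, so that a single factor of $[\otp:V]$ can be pulled out of the total count. Were there points of nontrivial stabilizer, orbits through them would split into fewer $V$-orbits and the clean proportionality would fail; here this is automatic because $\Lambda_q(M)^\ast$ consists of nonzero field elements, but I would state it explicitly. (I am reading the right-hand side of the statement as $[\otp:V]\bigl(\delta_{M,\otp}(q)-1\bigr)$; the displayed formula seems to omit the argument $q$ on $\delta_{M,\otp}$.)
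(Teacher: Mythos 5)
Your proof is correct and takes essentially the same route as the paper, which simply observes that the quotient map $(\Lambda_q(M)\setminus\{0\})/V \to (\Lambda_q(M)\setminus\{0\})/\otp$ is $[\otp:V]$-to-$1$; your freeness-of-the-action argument is exactly the justification implicit in that one-line proof. (You are also right that the statement omits the argument $q$ on $\delta_{M,\otp}$.)
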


\begin{proof}
  The quotient map
  $(\Lambda_q(M) \setminus \{0\})/V \to (\Lambda_q(M) \setminus \{0\})/\otp$
  is $[\otp:V]$-to-$1$.
\end{proof}

Thus we obtain an algorithm to calculate the defects as follows:
\begin{alg}\label{alg:defect}
  Given $M, V$ where $M$ is an ideal, compute the first $q$ defects
  $(\delta_{M,V}(i))_{i=1}^q$.
  \begin{enumerate}
  \item List all totally positive
    elements of $\dif^{-1}_K$ with trace less than $q$, using
    Algorithm~\ref{alg:int-points-rat-poly}.  (We proved in
    Corollary~\ref{cor:tm-is-polyhedral} that these are the points of a lattice
    inside a rational polytope, so the algorithm is applicable.)
  \item Determine the sets of ideals
    $\isi_i = \{(t\dif_K): t \in \dif^{-1}_K, \tr t < i\}$ for $1 \le i \le q$.
  \item For each $I \in \isi_q$, determine the set of divisors
    $D_I$ of $I$, and let
    $D_{I,+} = \{J \in D_I: IJ^{-1}M \textup{ is narrowly principal}\}$.
    For each $i$
    let $\isd_i = \cup_{I \in \isi_i} D_{I,+}$.
  \item The answer is $([\otp:V]\#\isd_i + 1)_{i=1}^q$.
  \end{enumerate}
\end{alg}

\begin{prop}\label{prop:alg-defect-works}
  Algorithm~\ref{alg:defect} terminates and is correct.
\end{prop}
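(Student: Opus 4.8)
The plan is to establish termination and correctness separately, the heart of the matter being an explicit bijection between the nonzero $\otp$-orbits in $\Lambda_i(M)$ and the ideals collected in $\isd_i$. For termination, I would observe that every list the algorithm produces is finite: in Step~1 the totally positive elements of $\dif_K^{-1}$ of trace $<q$ are the lattice points of a bounded rational polytope (the trace-$\le q$ slice of the totally positive cone is a simplex), hence finite in number and enumerable by Algorithm~\ref{alg:int-points-rat-poly}; the ideals $\isi_i$, their divisor sets $D_I$, the narrow-principality tests, and the unions $\isd_i$ then involve only finitely many ideals, all computable by the standard number-theoretic routines assumed at the start of Section~\ref{sec:algorithms}.

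For correctness, by Theorem~\ref{thm:knoller-defect} I must show the returned value equals $\#(\Lambda_i(M)/V)$, and by Lemma~\ref{lem:defect-of-subgroup} it suffices to treat $V=\otp$ and prove $\#\isd_i = \delta_{M,\otp}(i)-1 = \#\bigl((\Lambda_i(M)\setminus\{0\})/\otp\bigr)$; the factor $[\otp:V]$ and the $+1$ in Step~4 then reproduce the general case. So I would set up a bijection between the nonzero $\otp$-orbits in $\Lambda_i(M)$ and $\isd_i$. Since $\hat M = \dif_K^{-1}M^{-1}$, every nonzero $x \in \hat M_+$ determines the integral ideal $I_1 := (x)\dif_K M$, and $x\mapsto I_1$ descends to a bijection between $\otp$-orbits of totally positive elements of $\hat M$ and integral ideals $I_1$ for which $\dif_K^{-1}M^{-1}I_1$ is narrowly principal (two totally positive generators of a principal ideal differ by an element of $\otp$). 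Membership $x\in\Lambda_i(M)$ means $\tr(xm)<i$ for some $m\in M_+$; writing $t:=xm \gg 0 \in \dif_K^{-1}$ and $I_2:=(t)\dif_K$, Lemma~\ref{lem:which-m} identifies the existence of such an $m$ with the divisibility $I_1\mid I_2$, while $t\gg 0$ and $\tr t<i$ give $I_2\in\isi_i$. Thus the orbits in $\Lambda_i(M)$ correspond exactly to the integral ideals $I_1$ with $\dif_K^{-1}M^{-1}I_1$ narrowly principal that divide some $I_2\in\isi_i$.

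The crux, and the step I expect to be the main obstacle, is to match this intrinsic description with the set $\isd_i$ literally produced by the algorithm, namely the union over $I_2\in\isi_i$ of the divisors $J\mid I_2$ with $I_2 J^{-1}M$ narrowly principal. I would take $J=I_1$: then $J\mid I_2$ is precisely the divisibility above, and since $(t)$ is narrowly principal and narrow principality is preserved under products and inverses, $I_2 J^{-1}M = (t)\dif_K I_1^{-1}M$ is narrowly principal if and only if $\dif_K^{-1}M^{-1}I_1$ is. This equivalence is what makes the algorithm's condition, which on its face depends on the auxiliary ideal $I_2$, reduce to the intrinsic condition on $I_1$; in particular it guarantees that when the same $I_1$ arises as a divisor of several $I_2\in\isi_i$, the set-theoretic union of Step~3 collapses these correctly, so that $\#\isd_i$ counts orbits without repetition. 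Combining this identification with Lemma~\ref{lem:defect-of-subgroup} and Theorem~\ref{thm:knoller-defect} then yields the claimed output $([\otp:V]\#\isd_i+1)_{i=1}^q$.
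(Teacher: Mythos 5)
Your proof is correct and follows essentially the same route as the paper's: both reduce to $V=\otp$ via Lemma~\ref{lem:defect-of-subgroup}, use Lemma~\ref{lem:which-m} to translate membership in $\Lambda_i(M)$ into the divisibility $I_1\mid I_2$, observe that the narrow principality of $I_2J^{-1}M$ is equivalent to that of $\dif_K^{-1}M^{-1}J$ because $(t)$ is narrowly principal, and count orbits by identifying narrowly principal ideals with their totally positive generators modulo $\otp$. Your write-up is simply a more explicit version of the paper's argument, with the useful added remark that the set-theoretic union in Step~3 prevents double-counting when the same $I_1$ divides several $I_2$.
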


\begin{proof}
  Termination is immediate, since this algorithm has
  no loops and all of the steps are effective.  For correctness,
  first we assume that $V = \otp$.  Let $t \in \dif^{-1}_{K,+}$ with
  $\tr m < q$ and let $x \in {\hat M}_+$; define $I_1, I_2$ as in
  Lemma~\ref{lem:which-m}.  If $tx^{-1} \in M$ then, from the above,
  $I_1 I_2^{-1} = t^{-1}x M^{-1}$ and $I_1I_2^{-1}M$ is narrowly principal,
  being generated by $t^{-1}x$.  The converse follows similarly.  Counting
  narrowly principal ideals is the same as counting totally positive
  generators of those ideals up to totally positive units, so the result
  follows.  The last step, giving the answer,
  is justified by Lemma~\ref{lem:defect-of-subgroup}.
\end{proof}

\begin{myremark}\label{rem:avoid-repetition}
  We give the output in this form because computing the $q$th defect is not
  significantly harder than computing the first $q$ defects.
\end{myremark}

We now consider the problem of determining an asymptotic.

\begin{defn}\label{def:spiky}
  %Let $r$ be an $M$-reducer (Definition~\ref{def:tmin-bal-red}).
  For $r \gg 0 \in K$, 
  let $T_{M,V,r}(q)$  be the intersection of the $V$-trace-minimal cone
  with the half-space $\tr rx \le q$.  Let
  $T_{M,V}(q) = \cup_{r \in \RR_M'} T_{M,V,r}(q)$.
\end{defn}

\begin{ex}\label{ex:spiky-small}
  We illustrate this definition for a quadratic and a cubic field.
  First let $K = \Q(\sqrt{14})$.  The trace-minimal cone is bounded by
  rays through $4 \pm \sqrt{14}$, and the reducers are $4 \pm \sqrt{14}$.
  Thus the set $\RR_{\O_K}'$ consists of $3$ elements, the corresponding
  triangles being shown in the first plot of Figure~\ref{fig:spiky}.
  Next we consider the smallest cubic field that has reducers, namely
  $K = \cf{148}$, obtained by adjoining a root $\alpha$ of $x^3-x^2-3x+1$.
  The maximal order is $\Z[\alpha]$ and so we specify an element of $K$
  as a triple $(a,b,c)$ representing $a+b\alpha+c\alpha^2$.
  The trace-minimal cone is bounded by rays through
  \begin{equation*}
    (4,26,21),(15,-32,14),(17,18,6),(53,-44,10),(68,-2,-13),(139,34,-38)
  \end{equation*}
  (one verifies that for each $x$ in this list there are at least
  two totally positive units $u_x \ne 1$ with $\tr x = \tr xu_x$, an obvious
  necessary condition); we find $7$ reducers, namely
  \begin{equation*}
    (0,1,1),(1,-2,1),(1,2,1),(2,-7,3),(2,-3,1),(4,1,-1),(5,0,-1).
  \end{equation*}
  Thus $T_{\O_K,\otp}(1)$ is a union of $8$ polyhedra, shown in the second plot
  of Figure~\ref{fig:spiky}.  The largest polyhedron, containing a
  factor of $256/275$ of the volume of the union, corresponds to
  $T_{\O_K,\otp,1}(1)$ and is shown in yellow.  A Jupyter notebook in \cite{code}
  contains an interactive version of this plot.

  \begin{figure}
    \includegraphics[scale=0.7]{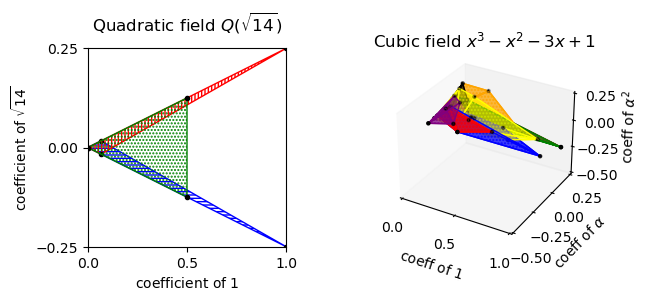}
    \caption{Regions describing elements of the quadratic and cubic fields of
      discriminant $56$ and $148$ respectively having an integral multiple
      with bounded trace.  For the quadratic field we have a union of $3$
      triangles $T_1, T_2, T_3$, corresponding to the reducers
      $4 - \sqrt{14}, 4 + \sqrt{14}$ and $1$, and shown in the figure in
      red, blue, and green respectively.  The vertices of $T_1$ are
      $(0,0),(1,-1/4),(1/15,1/60)$, and those of $T_3$ are
      $(0,0),(1/2,-1/8),(1/2,1/8)$.  We obtain $T_2$ by reflecting $T_1$ in the
      $y$-axis.  For the cubic field, the region is a union of $8$ polyhedra,
      each with $7$ vertices.}\label{fig:spiky}
  \end{figure}
\end{ex}

\begin{lem}\label{lem:scale-polytope}
  $T_{M,V}(1)$ is a finite union of rational polytopes and
  $T_{M,V}(q)$ is obtained by scaling $T_{M,V}(1)$ by $q$.  \qed
\end{lem}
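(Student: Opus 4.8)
The plan is to unwind the definitions from Definition~\ref{def:spiky} and reduce everything to two facts: that a finite union of rational polytopes is a finite union of rational polytopes (trivially), and that each individual piece $T_{M,V,r}(q)$ scales linearly in $q$. First I would recall that by definition $T_{M,V}(q) = \bigcup_{r \in \RR_M'} T_{M,V,r}(q)$, where the union is over the finite set $\RR_M' = \RR_M \cup \{\min M\}$; the finiteness of $\RR_M$ is exactly the content of Lemma~\ref{lem:reducers-finite} (the reducers are bounded at every real place, hence there are finitely many). So the union is finite, and it suffices to treat one index $r$ at a time.

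Next I would examine a single region $T_{M,V,r}(q)$, which is the intersection of the $V$-trace-minimal cone with the half-space $\{x : \tr rx \le q\}$. By Corollary~\ref{cor:tm-is-polyhedral} (together with the remark that the results extend verbatim to $V$-trace-minimality), the $V$-trace-minimal cone is a polyhedral cone with finitely many rational faces, i.e.\ it is cut out by finitely many inequalities $\tr ux \ge \tr x$ with $u$ ranging over a finite set of units, all of which have rational coefficients in a basis of $K$ given by elements of $K$. Intersecting this rational cone with the rational half-space $\tr rx \le q$ (rational because $r \in K$ and $q \in \Q^+$) gives a rational polytope. This establishes that $T_{M,V}(1)$ is a finite union of rational polytopes.

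For the scaling claim I would use the homogeneity of the trace-minimal cone and the linearity of the trace. The cone is defined by homogeneous inequalities $\tr(u-1)x \ge 0$, so it is invariant under the dilation $x \mapsto qx$. Under this same dilation the inequality $\tr rx \le 1$ becomes $\tr r(qx) \le q$, i.e.\ $\tr rx' \le q$ after writing $x' = qx$. Hence $x$ lies in $T_{M,V,r}(1)$ if and only if $qx$ lies in $T_{M,V,r}(q)$, so $T_{M,V,r}(q) = q\,T_{M,V,r}(1)$; taking the union over $r \in \RR_M'$ gives $T_{M,V}(q) = q\,T_{M,V}(1)$, as claimed.

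There is no serious obstacle here: the lemma is essentially bookkeeping built on the already-established polyhedrality of the trace-minimal cone (Corollary~\ref{cor:tm-is-polyhedral}) and the finiteness of the reducer set (Lemma~\ref{lem:reducers-finite}), which is why the statement is marked with \texttt{\textbackslash qed} and left without a written proof. The only point requiring a moment's care is confirming that scaling commutes with the union, i.e.\ that the \emph{same} scale factor $q$ works simultaneously for every $r$; this is immediate since the dilation $x \mapsto qx$ is a single map applied to the whole configuration, and both the cone and each half-space transform correctly under it.
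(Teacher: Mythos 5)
Your proof is correct and fills in exactly the bookkeeping the paper treats as immediate (the lemma carries a \qed with no written proof): finiteness of the union comes from Lemma~\ref{lem:reducers-finite}, rational polyhedrality of each piece from Corollary~\ref{cor:tm-is-polyhedral} and the rationality of $\tr rx \le q$ for $r \in K$, $q \in \Q$, and the scaling from homogeneity of the cone together with linearity of the trace. No discrepancy with the paper's (implicit) argument.
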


\begin{lem}\label{lem:reducers-reduce}
  Let $x \in {\hat M}_+$ and suppose that $y \in M_+$ is
  such that $\tr xy < q$.  Then there is $r \in \RR_M'$
  such that $\tr xr < q$.
\end{lem}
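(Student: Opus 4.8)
The plan is to combine the extremal-ray criterion for reducers in Lemma~\ref{lem:reduce-extremal-ray} with the observation that the statement is only ever applied when $x$ lies in the $V$-trace-minimal cone, and indeed the argument will use precisely this. An element of $\Lambda_q(M)/V$ is recorded by its $V$-trace-minimal representative, and the region $T_{M,V}(q)$ of Definition~\ref{def:spiky} is assembled entirely from $V$-trace-minimal points; so I would establish the statement for such $x$, which is what the comparison of $\Lambda_q(M)/V$ with $T_{M,V}(q)$ requires. Write $\R^+v_1,\dots,\R^+v_m$ for the extremal rays of the $V$-trace-minimal cone, with each $v_i\in K$ totally positive, as furnished by Corollary~\ref{cor:tm-is-polyhedral} and used in Lemma~\ref{lem:reduce-extremal-ray}.

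Since $x$ lies in this polyhedral cone, it is a nonnegative combination of its extremal rays: $x=\sum_{i=1}^m c_iv_i$ with all $c_i\ge 0$. I then argue by a dichotomy on the given $y$. If $y$ is an $M$-reducer, then $y\in\RR_M'$ and I simply take $r=y$, for which $\tr xr=\tr xy<q$ by hypothesis.

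If instead $y$ is not an $M$-reducer, then the $V$-analogue of Lemma~\ref{lem:reduce-extremal-ray} gives $\tr yv_i\ge \min M\,\tr v_i$ for every extremal ray $v_i$. Pairing $x$ with $y$ through the decomposition and using $c_i\ge 0$ yields
\begin{equation*}
  \tr xy=\sum_{i=1}^m c_i\,\tr(v_i y)\ \ge\ \min M\sum_{i=1}^m c_i\,\tr v_i=\min M\,\tr x=\tr\!\bigl(x\cdot\min M\bigr),
\end{equation*}
the last equality holding because $\min M\in\Q$. Hence $\tr(x\cdot\min M)\le \tr xy<q$, and since $\min M\in\RR_M'$ I take $r=\min M$. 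This exhausts the cases; note that $\min M$ is never itself a reducer, so the two branches match $\RR_M'=\RR_M\cup\{\min M\}$ exactly.

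The step I expect to be the real obstacle is not either case above but the restriction to $V$-trace-minimal $x$: being a reducer is \emph{not} invariant under the totally positive units, so one cannot move a general $x$ into the cone, find a reducer there, and transport it back (replacing $x$ by $ux$ and $r$ by $u^{-1}r$ leaves $\RR_M'$ stable only as a set mismatched with the new pairing). The honest content is therefore that, for the $V$-trace-minimal representative $x$ of a class in $\Lambda_q(M)$, the displayed inequality places $x$ in $T_{M,V,\min M}(q)$, while a reducing $y$ places it in $T_{M,V,y}(q)$; either way $x\in T_{M,V}(q)$. I would make sure this normalization of $x$ is flagged where the lemma is invoked, since it is exactly what makes the extremal-ray decomposition — and hence Lemma~\ref{lem:reduce-extremal-ray} — available.
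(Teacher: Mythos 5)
Your proof is correct and has the same two-case skeleton as the paper's one-line argument: if $y \in \RR_M$ take $r=y$; otherwise take $r=\min M$. The only divergence is how you close the second case. The paper does it directly from the definition of a reducer, which quantifies over \emph{all} trace-minimal $z$: ``$y$ is not a reducer'' applied with $z=x$ gives $\tr xy \ge \min M\,\tr x$ at once, whence $\tr x < q/\min M$. Your detour through the extremal rays of the cone and Lemma~\ref{lem:reduce-extremal-ray} reproves exactly this (it is the same convexity computation already inside the proof of that lemma), so it is valid but not needed. One caution about the detour: if you literally decompose $x$ along the extremal rays of the $V$-trace-minimal cone for a proper subgroup $V \subsetneq \otp$, those rays need not be trace-minimal for the full unit group, whereas Lemma~\ref{lem:reduce-extremal-ray} and the definition of $\RR_M$ are stated for the full trace-minimal cone; the two only match cleanly when $V=\otp$, which is the case everything reduces to via Lemma~\ref{lem:defect-of-subgroup}.

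Your closing observation is the most valuable part and is not a defect of your argument but of the statement: as printed, the lemma assumes only $x \in \hat M_+$, and in that generality it is false. For $M=\O_K$ take $x$ a very unbalanced totally positive unit and $y=x^{-1}\in M_+$, so $\tr xy = d < q$ while $\tr xr$ is arbitrarily large for every $r$ in the finite set $\RR_M'$. The paper's own step ``if not, then $\tr x < q/\min M$'' silently uses trace-minimality of $x$, which is supplied at the unique point of application in Lemma~\ref{lem:lattice-points-match}. So you have correctly identified a hypothesis the paper omits rather than introduced an unnecessary one.
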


\begin{proof}
  Suppose that $\tr xy < q$.  If $y \in \RR_M$ we take $r = y$.  If not,
  then $\tr x < q/\min M$ and we take $r = \min M$.
\end{proof}

\begin{lem}\label{lem:lattice-points-match}
  There is a surjective map from the set of $\hat M$-points of
  $T_{M,V}(q)$ to $\Lambda_{q+1}(M)/V$, taking $x \in \hat M$ to $xV$.
  If $x \in \hat M$ is in the interior of $T_{M,V}(q)$, then no
  other element of the set has the same image as $x$.
\end{lem}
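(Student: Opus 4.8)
The plan is to exhibit the map directly, verify that it lands in $\Lambda_{q+1}(M)/V$, and then treat surjectivity and the interior-injectivity separately. The single arithmetic point that explains why the index $q+1$ appears is that $\tr(xr)$ is an \emph{integer} whenever $x \in \hat M$ and $r \in M$, so the strict bound ``$<q+1$'' in the definition of $\Lambda_{q+1}(M)$ is equivalent to the closed bound ``$\le q$'' defining $T_{M,V}(q)$. I would record this equivalence first and use it in both directions. For well-definedness, note that an $\hat M$-point $x$ of $T_{M,V}(q)$ lies in the $V$-trace-minimal cone (so $x \gg 0$ or $x = 0$) and satisfies $\tr(xr) \le q$ for some $r \in \RR_M' \subseteq M_+$; with $x \in \hat M$ this shows $x \in \Lambda_{q+1}(M)$. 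Since $V$ preserves $M$, and hence $\hat M$, total positivity, and the defining condition of $\Lambda_{q+1}$, passing from $x$ to $xV$ is legitimate.

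For surjectivity I would choose a representative $z$ of a given class, dispose of $z = 0$ directly, and otherwise use that $z \in \hat M_+$ with $\tr(zm) \le q$ for some $m \in M_+$ (by integrality). The first step is to translate $z$ into the cone: minimizing $\tr(vz)$ over $v \in V$, a minimum existing because $\hat M$ contains only finitely many totally positive elements of bounded trace, produces $x = v_0 z$ that is $V$-trace-minimal (this reduction is exactly the $V$-analogue of Lemma~\ref{lem:trace-min-finite} noted in Section~\ref{sec:trace-min-reducer}), and then $\tr(x\cdot v_0 m) = \tr(zm) \le q$. The second step is to upgrade ``small trace against some $m \in M_+$'' to ``small trace against a reducer'': here I would invoke Lemma~\ref{lem:reducers-reduce} with bound $q+1$ to obtain $r \in \RR_M'$ with $\tr(xr) < q+1$, hence $\tr(xr) \le q$, placing $x$ in $T_{M,V,r}(q) \subseteq T_{M,V}(q)$ as an $\hat M$-point with $xV$ the prescribed class.

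For the interior statement, I would use that every $T_{M,V,r}(q)$ sits inside the $V$-trace-minimal cone $C$, so $T_{M,V}(q) \subseteq C$ and therefore the interior of $T_{M,V}(q)$ is contained in the interior of $C$. Suppose $x$ is such an interior $\hat M$-point and $x' = vx$ is a second $\hat M$-point of $T_{M,V}(q)$ with the same image, $v \in V$. Membership of both $x$ and $vx$ in $C$ forces $\tr(vx) \ge \tr x$ and, applying $v^{-1}$, also $\tr x \ge \tr(vx)$, so $\tr((v-1)x) = 0$. Since $y \mapsto \tr((v-1)y)$ is nonnegative on $C$ and vanishes at the interior point $x$, it would attain its minimum over $C$ at an interior point and hence be identically zero; as the trace form is nondegenerate this gives $v - 1 = 0$, so $v = 1$ and $x' = x$.

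The main obstacle I anticipate is the second step of surjectivity. After reducing into the cone one only knows that $x$ has small trace against \emph{some} element of $M_+$, whereas membership in the union $T_{M,V}(q)$ requires small trace against one of the finitely many elements of $\RR_M'$; bridging this gap is precisely the role of Lemma~\ref{lem:reducers-reduce}. Getting the strict-versus-closed bookkeeping right, via the integrality of the trace pairing on $\hat M \times M$, is what pins down the shift to $q+1$ and makes the correspondence exact rather than merely approximate.
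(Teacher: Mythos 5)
Your proof is correct and follows essentially the same route as the paper's: well-definedness directly from the definition of $T_{M,V}(q)$, surjectivity by passing to a $V$-trace-minimal representative of the class and invoking Lemma~\ref{lem:reducers-reduce} to replace an arbitrary $m \in M_+$ by an element of $\RR_M'$, and the interior statement by observing that $\tr x = \tr(vx)$ with $v \ne 1$ forces $x$ onto the boundary of the trace-minimal cone. Your explicit bookkeeping of the ``$\le q$'' versus ``$< q+1$'' shift via integrality of the trace pairing on $\hat M \times M$ is a detail the paper leaves implicit, but the argument is the same.
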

%% Every element of $\Lambda_M(q)/V$ is represented by a point of
%% the lattice $\hat M$ in $T_{M,V}(q)$.
%% If the lattice point is in the interior, then
%% the representation is unique.

\begin{proof}
  First, this is indeed a map of the given sets: for $x \in \hat M$,
  being in $T_{M,V}(q)$ implies that there exists
  $y$ with $\tr xy \le q$ which is either a reducer or $\min M$,
  so $xV \subset \Lambda_{q+1}(M)$.
  For surjectivity, fix $xV \in \Lambda_{q+1}(M)/V$ and choose $x$ to be
  a trace-minimal element of $xV$.  By definition we have $\tr xy \le q$
  for some $y \in M$, and by Lemma~\ref{lem:reducers-reduce} we may take
  $y \in \RR_M'$.  Thus $x \in T_{M,V,y}(q) \subseteq T_{M,V}(q)$.
    
  For the injectivity statement, suppose that $x, x' \in \hat M \cap T_{M,V}(q)$
  have the same image in $\Lambda_q(M)/V$.  Since $x^{-1}x' \in V$ and
  $x, x'$ are trace-minimal, we must have $\tr x = \tr x'$, which means that
  $x, x'$ are on the boundary of the trace-minimal cone and hence of
  $T_{M,V}(q)$.
\end{proof}

\begin{alg}\label{alg:asymptotic-dth-power}
  Given $M, V$, where $M$ is a fractional ideal of $K$ and $V$ is a subgroup
  of finite index in $\otp$,
  compute the rational constant $c_{M,V}$ such that
  $\delta_{M,V}(q) \sim c_{M,V} q^d$.
  \begin{enumerate}
  \item Determine the trace-minimal cone by means of
    Algorithm~\ref{alg:trace-minimal}.
  \item Determine the reducers $\RR_M$ using
    Algorithm~\ref{alg:reducers}.
  \item Let $\PP$ be the set of polyhedra given by intersecting the
    trace-minimal cone with the half-spaces $\tr rx \le \min M \tr x$
    for each element of $\RR_M'$.  Use
    Algorithm~\ref{alg:volume} to determine the volume $E$
    of $\cup_{P \in \PP}P$.
  \item Return $E [\otp:V]/N_{K/\Q}(\hat M)$, where $N_{K/\Q}$ is the norm map
    on fractional ideals.
  \end{enumerate}
\end{alg}

%% \begin{prop}\label{prop:asymptotic-dth-power}
%%   There exists a rational
%%   constant $c_{M,V}$ such that $\delta_{M,V}(q) \sim c_{M,V} q^d$.
%% \end{prop}

\begin{prop}\label{prop:asymptotic-dth-power-works}
  Algorithm~\ref{alg:asymptotic-dth-power} terminates and is correct.
\end{prop}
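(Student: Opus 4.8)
The plan is to prove that Algorithm~\ref{alg:asymptotic-dth-power} terminates and computes the correct constant $c_{M,V}$ in the asymptotic $\delta_{M,V}(q) \sim c_{M,V}q^d$. Termination is immediate: each of the four steps invokes a procedure already shown to terminate (Algorithms~\ref{alg:trace-minimal},~\ref{alg:reducers},~\ref{alg:volume}) or performs a single arithmetic operation, and there are no loops. The substance is correctness, and the strategy is to connect the geometric volume computed in Step~3 to the counting problem defining the defect, using the lattice-point dictionary established in Lemma~\ref{lem:lattice-points-match} together with the scaling property of Lemma~\ref{lem:scale-polytope}.

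Here is how I would organize the argument. First I would reduce to the case $V = \otp$ using Lemma~\ref{lem:defect-of-subgroup}, which gives $\delta_{M,V}(q) - 1 = [\otp:V](\delta_{M,\otp}(q)-1)$; since $\delta_{M,\otp}(q)-1 \sim c_{M,\otp}q^d$ would force the index factor $[\otp:V]$ to appear exactly as in Step~4, this isolates the core claim to the full unit group. Next I would identify the region whose lattice points we are counting. By Lemma~\ref{lem:lattice-points-match}, the $\hat M$-points of $T_{M,V}(q)$ surject onto $\Lambda_{q+1}(M)/V$, and the map is injective on interior points, with the only possible failure of injectivity confined to the boundary of the trace-minimal cone. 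Since the boundary is a finite union of codimension-one rational cones, its lattice points grow like $O(q^{d-1})$ and contribute nothing to the leading term; hence $\#\Lambda_{q+1}(M)/V$ and the number of $\hat M$-points of $T_{M,V}(q)$ agree up to $O(q^{d-1})$.

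It then remains to relate this lattice-point count to the volume $E$ output by Step~3. By Lemma~\ref{lem:scale-polytope}, $T_{M,V}(q)$ is $T_{M,V}(1)$ scaled by $q$, so its Euclidean volume is $\vol(T_{M,V}(1))\,q^d = E\,q^d$; note that $E$ is exactly the volume computed in Step~3, since the half-spaces $\tr rx \le \min M\tr x$ defining the polyhedra in $\PP$ are, after scaling, precisely the defining inequalities $\tr rx \le q$ of the $T_{M,V,r}(q)$ from Definition~\ref{def:spiky}. The standard principle that the number of points of a lattice $\Lambda$ in a scaled rational polytope is asymptotic to $\vol/\covol(\Lambda)$ times $q^d$ then gives a leading coefficient $E/\covol(\hat M)$; and $\covol(\hat M)$, for the trace form, equals $N_{K/\Q}(\hat M)$ up to the fixed discriminant normalization, which is exactly the denominator appearing in Step~4. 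Combining this with the index factor from the first reduction yields $c_{M,V} = E[\otp:V]/N_{K/\Q}(\hat M)$, as claimed.

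I expect the main obstacle to be the bookkeeping in the volume-to-lattice-count passage, specifically confirming that the normalization $\covol(\hat M) = N_{K/\Q}(\hat M)$ is the correct constant in the coordinates implicitly fixed when we wrote the polytopes with rational inequalities, and verifying that the overlaps among the regions $T_{M,V,r}(q)$ for distinct reducers $r$ are handled correctly. The latter is precisely why Algorithm~\ref{alg:volume} is invoked rather than a naive sum: the $T_{M,V,r}(q)$ genuinely overlap, and only the volume of their \emph{union} gives the right leading term, since each lattice point of $\Lambda_{q+1}(M)/V$ is represented once (up to boundary effects), not once per reducer that witnesses it. I would take care to state that the surjectivity in Lemma~\ref{lem:lattice-points-match} already accounts for multiple witnesses by mapping to a single class, so that counting union-points, not summing per-region points, is exactly what matches the defect.
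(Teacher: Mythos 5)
Your argument is essentially the paper's own proof: termination from the absence of loops, reduction to $V=\otp$ via Lemma~\ref{lem:defect-of-subgroup}, the bijection up to lower-dimensional boundary terms from Lemmas~\ref{lem:scale-polytope} and~\ref{lem:lattice-points-match}, the standard Ehrhart-type lattice-point asymptotic, and the normalization $\covol \hat M/\covol \O_K = N_{K/\Q}(\hat M)$. Your extra remarks on why the volume of the \emph{union} (rather than a sum over reducers) is the right quantity, and on the negligibility of the boundary, only make explicit what the paper states in passing.
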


\begin{proof}
  As before, termination is immediate from the termination of each step, since
  there are no loops.  For correctness, by
  combining Lemmas~\ref{lem:scale-polytope}, \ref{lem:lattice-points-match}
  we see that asymptotically the elements of $\Lambda_q(M)/\otp$ are
  in bijection with the points of $\hat M$ lying in $\cup_{P \in \PP} P$
  scaled by $q$.  (The failure of injectivity
  only affects the points on polytopes of lower dimension, which are
  asymptotically negligible.)  The correctness of the algorithm follows
  for $V = \otp$, since $\covol \hat M/\covol \O_K = N(\hat M)$.
  The more general result follows from Lemma~\ref{lem:defect-of-subgroup}.
  The desired asymptotic
  is then a standard fact \cite[Lemma 3.19]{beck-robins},
  and since $\delta_{M,V}(q) = \#\Lambda_q(M)/V$ the result follows.
\end{proof}

\begin{myremark}
  In this algorithm we could replace the trace-minimal cone
  by the $V$-trace-minimal cone and omit the multiplication by $[\otp:V]$
  in the last step.
\end{myremark}

\begin{cor}\label{cor:change-m-v}
  The constant $c_{M,V}$ depends only on the narrow ideal class of $M$, 
  and $c_{M,V} = [\otp:V] c_{M,\otp}$.
\end{cor}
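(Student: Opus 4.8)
The plan is to reduce both assertions to facts about the sets $\Lambda_q(M)/V$ directly, since $\delta_{M,V}(q) = \#\Lambda_q(M)/V$ and $c_{M,V}$ is by definition the leading coefficient of $q \mapsto \delta_{M,V}(q)$. The factor $[\otp:V]$ is then essentially immediate from Lemma~\ref{lem:defect-of-subgroup}, which gives $\delta_{M,V}(q) - 1 = [\otp:V]\bigl(\delta_{M,\otp}(q) - 1\bigr)$. Since $\delta_{M,\otp}(q) \sim c_{M,\otp}q^d$ grows without bound (its leading coefficient $c_{M,\otp}$ is positive, as the region of Definition~\ref{def:spiky} contains the full-dimensional piece coming from $r = \min M$), the constant terms are asymptotically negligible, so dividing by $q^d$ and letting $q \to \infty$ yields $c_{M,V} = [\otp:V]\,c_{M,\otp}$. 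One can equally read this off the output $E[\otp:V]/N_{K/\Q}(\hat M)$ of Algorithm~\ref{alg:asymptotic-dth-power}, in which the only dependence on $V$ is the displayed factor.

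By the previous paragraph it then suffices to prove that $c_{M,\otp}$ is a narrow-class invariant. The key step is to exhibit, for each totally positive $\lambda$, an explicit bijection between the defect sets for $M$ and for $M' = (\lambda)M$; every ideal narrowly equivalent to $M$ has this shape, so this case suffices. I would use that dualizing inverts scaling, namely $\hat{M'} = \lambda^{-1}\hat M$, and consider the map $x \mapsto \lambda^{-1}x$. Because $\lambda \gg 0$ this preserves total positivity, it carries $\hat M_+$ onto $\hat{M'}_+$, and it is $\otp$-equivariant since multiplication by $\lambda$ commutes with the unit action. The crucial computation is trace preservation: for $x \in \hat M$ and $m \in M$, setting $x' = \lambda^{-1}x$ and $m' = \lambda m \in M'$ gives $\tr(x'm') = \tr(xm)$, so $x \in \Lambda_q(M)$ if and only if $x' \in \Lambda_q(M')$. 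Hence the map descends to a bijection $\Lambda_q(M)/\otp \xrightarrow{\sim} \Lambda_q(M')/\otp$, forcing $\delta_{M,\otp}(q) = \delta_{M',\otp}(q)$ for every $q$, and in particular $c_{M,\otp} = c_{M',\otp}$.

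I do not expect a serious obstacle: the argument is formal once the bijection is in place. The only points that deserve care are checking that $x \mapsto \lambda^{-1}x$ is genuinely a map of duals and that passing to the $\otp$-quotients is legitimate, which holds because totally positive units stabilize every fractional ideal and therefore preserve both $M$ and $M'$. If one instead wanted to argue through Algorithm~\ref{alg:asymptotic-dth-power}, the analogous but more laborious task would be to track how scaling by $\lambda$ transforms the reducer set $\RR_M'$, the trace-minimal cone, and the norm $N_{K/\Q}(\hat M)$, and to verify that these changes cancel so as to leave $E/N_{K/\Q}(\hat M)$ unchanged; the direct bijection on $\Lambda_q$ avoids this bookkeeping entirely and is the route I would take.
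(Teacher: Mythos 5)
Your proof is correct and follows essentially the same route as the paper: the narrow-class invariance comes from the multiplication bijection $\Lambda_q(M)/V \leftrightarrow \Lambda_q(\lambda M)/V$ induced by a totally positive generator (the paper phrases it as multiplication by $p$ with $I=(p)$), and the factor $[\otp:V]$ comes from the index of the subgroup. Your derivation of the latter from Lemma~\ref{lem:defect-of-subgroup} rather than from the fundamental-domain volume is an immaterial variation, and your verification that $\hat{M'}=\lambda^{-1}\hat M$ and that traces are preserved is exactly the content behind the paper's ``obvious bijection.''
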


\begin{proof} 
  Let $I$ be narrowly principal and generated by $p$.  There is an obvious
  bijection $\Lambda_q(MI)/V \leftrightarrow \Lambda_q(M)/V$ given by
  multiplication by $p$.  For the second statement, if we
  pass to a subgroup of $V$ of index $n$, the volume of the
  fundamental domain is multiplied by $n$.  The result follows.
\end{proof}

\begin{myremark}\label{rem:choice-of-m}
  Although $c_{pM,V} = c_{M,V}$ for $p \gg 0$, the time required
  to determine it using Algorithm~\ref{alg:asymptotic-dth-power} seems
  to grow rapidly with $[pM:(\min(pM))]$, and it is wise to choose $p$
  so as to minimize this quantity.
\end{myremark}

We have now solved Problems~\ref{prob:compute-one-delta},
\ref{prob:compute-all-delta}.
%% we may assume that $V = \otp$.
%% In this case Algorithm~\ref{alg:volume} can be used to compute $\vol(T_V(1))$,
%% so we now have a method to solve these problems for a given field by means of
%% a finite computation.

%% could get rid of this
\begin{myremark}\label{rem:not-independent}
  Even in dimension $2$,
  it is not true that $c_{M,V}$ is independent of $M$.  For
  example, take $K = \Q(\sqrt{3})$ and consider the two ideals
  $(1), (\sqrt 3)$ that represent the narrow class group.
  If $M = (1)$ we find that
  $\delta_M(q) = 1 + q(q-1)$, while for $M' = (\sqrt 3)$ it turns out
  that $\delta_{M'}(q) = 1 + q(q-1)/2$.
  By \cite{hmf} or \cite[Example II.5.1]{vdg}
  the self-intersections of the curves in the cusp resolutions are
  respectively $-4$ and $-3,-2$, so this is in accordance with
  \cite[Proposition III.3.6]{vdg}.  (Note that \cite{vdg} refers to the
  obstruction to extending forms that vanish at the cusp across a resolution,
  not all forms, so the quantity considered by van der Geer is $1$ less than
  our defects.)  In general it appears that, as $I$ ranges over genus
  representatives, the largest $c_{I,\otp}$ occurs for the principal
  genus.  For examples in dimension $3$, compare Tables \ref{tab:cubic-pg-0}
  and \ref{tab:cubic-nonprincipal-g0}.
\end{myremark}

\begin{myremark}\label{rem:answer-simple}
  By a standard result on rational polytopes, first proved by
  Ehrhart in 1962 but more easily accessible as
  \cite[Theorem 3.23]{beck-robins}, the $\delta_{M,V}(q)$
  for a given cusp are polynomial on residue classes.  However,
  computations of the $\delta_{M,V}(q)$ indicate that they satisfy a simpler
  formula than might be expected, especially in the case
  $M = \O_K, V = \otp$.  See Remark~\ref{rem:simple-defect}.
  We therefore suspect that there is some further structure to the
  $\delta(q)$ that remains to be elucidated.
\end{myremark}

\section{Results at level $1$}\label{sec:results-level-1}

In this section we will describe the application of the algorithms presented
here to Hilbert modular threefolds of level $1$.
To do so we need asymptotic formulas for both the dimension of the
space of modular forms of weight $2k$ and the defects.  For the first of
these, we already have the result in Proposition~\ref{prop:asymptotic-dim-hmf}.
We easily derive a useful consequence:

\begin{prop}\label{prop:general-type}
  Suppose that $(-1)^d \cdot 2\zeta_K(-1) > \sum_i c_i$, where the sum
  ranges over the cusps of $H_K$ and the $q$th defect of the $i$th cusp is
  asymptotic to $c_i q^d$.  Then $H_K$ is of general type.
\end{prop}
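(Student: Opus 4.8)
The plan is to combine the asymptotic dimension formula for the spaces of modular forms with the asymptotic defect estimates, feed both into the lower bound~\eqref{eqn:lower-bound}, and then read off the Kodaira dimension directly from its definition.

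First I would pin down the leading term of $\dim M_{2q}$. Applying Proposition~\ref{prop:asymptotic-dim-hmf} with $\Gamma = \PSL_2(\O_K)$, so that the index is $1$, the dimension of the space of cusp forms of weight $2q$ is asymptotic to $2\zeta_K(-1)(-q)^d = (-1)^d 2\zeta_K(-1)\,q^d$. The full space $M_{2q}$ exceeds the cusp forms only by Eisenstein series, whose number is bounded by the number of cusps and is therefore independent of $q$; hence $\dim M_{2q} = (-1)^d 2\zeta_K(-1)\,q^d + o(q^d)$ as well. Because $K$ is totally real, $(-1)^d\zeta_K(-1) > 0$, so this leading coefficient is a positive constant.

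Next I would assemble the defect side of~\eqref{eqn:lower-bound}. By hypothesis the $q$th defect of the $i$th cusp equals $c_i q^d + o(q^d)$, so the cusps contribute $\bigl(\sum_i c_i\bigr)q^d + o(q^d)$ to $\sum_{P \in S}\delta_P(q)$. The remaining points of $S$ are elliptic; by Remark~\ref{rem:cqs-2-3-canonical} the only cubic fields possessing elliptic points of nonvanishing defect are $\qz7$ and $\qz9$, so for every other $K$ the elliptic contribution vanishes identically, and in general Corollary~\ref{cor:ell-defect-asymptotic} bounds it by $O(q^d)$, allowing it to be folded into the $c_i$ if desired. Substituting into~\eqref{eqn:lower-bound} gives
\[
  \hhh^0(\tilde H_K, K^{\otimes q}) \;\ge\; \Bigl((-1)^d 2\zeta_K(-1) - \textstyle\sum_i c_i\Bigr)\,q^d + o(q^d).
\]

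Finally I would invoke the hypothesis $(-1)^d 2\zeta_K(-1) > \sum_i c_i$, which makes the bracketed constant strictly positive. Thus there is $c_0 > 0$ with $\hhh^0(\tilde H_K, K^{\otimes q}) \ge c_0 q^d$ for all large $q$, whence $\dim|K^{\otimes q}| \ge c_0 q^d - 1$; conversely $\hhh^0(\tilde H_K, K^{\otimes q}) \le \dim M_{2q}$ forces $\dim|K^{\otimes q}| \le C_0 q^d$ for some $C_0 > 0$. Taking $n=1$ and $k = d = \dim H_K$ in the definition of Kodaira dimension then yields $\kappa_{H_K} = d$, i.e.\ that $H_K$ is of general type. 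I expect the only delicate point to be the bookkeeping of the elliptic locus, since the stated sum ranges over cusps alone: one must note explicitly that the elliptic defects either vanish (the generic cubic case, Remark~\ref{rem:cqs-2-3-canonical}) or are of order at most $q^d$ and so cannot spoil the strict inequality. Everything else is substitution, the positivity of $(-1)^d\zeta_K(-1)$, and a comparison of leading coefficients.
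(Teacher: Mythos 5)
Your proof follows the paper's argument exactly: substitute the asymptotics for $\dim M_{2q}$ (Proposition~\ref{prop:asymptotic-dim-hmf}) and for the cusp defects into the lower bound~\eqref{eqn:lower-bound}, and observe that the hypothesis makes the leading coefficient positive, so $\hhh^0(K^{\otimes q})$ is not $o(q^d)$ and $H_K$ is of general type. Your extra bookkeeping for the elliptic locus (which the paper handles only at the point of application, via Remark~\ref{rem:cqs-2-3-canonical}) is welcome, though the phrase ``of order at most $q^d$ and so cannot spoil the strict inequality'' is not quite right: a nonvanishing elliptic defect asymptotic to $eq^d$ with $e>0$, as occurs for $\qz7$ and $\qz9$, would lower the leading coefficient and could break the conclusion, so one really does need the elliptic defects to vanish (the generic case) or to be added to the right-hand side of the hypothesis.
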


\begin{proof}
  We substitute the asymptotics for $\dim M_{2q}$ and $\sum_P \delta_{P}(q)$
  into (\ref{eqn:lower-bound}).  The assumptions imply that
  $\hhh^0(qK)$ is not $o(q^d)$ and so $H_K$ is of general type.
\end{proof}

Thus we will apply Algorithm~\ref{alg:asymptotic-dth-power} to determine
the asymptotic rate of growth $c_2k^3$ of the $k$th defect and compare it to
$-2\zeta_K(-1)$.  In particular, we recall that if $-2\zeta_K(-1) > c_2$ then
$H_K$ is of general type (see Remark~\ref{rem:why-probably-enough}).
% We could start with Q(sqrt 14), and for a talk it's probably a good idea.
We begin by reexamining one of the 
threefolds of arithmetic genus $1$ proved by Grundman \cite{gr-ds} to
have positive Kodaira dimension.

\begin{ex}\label{ex:gr-761}
  Let $K = K_{2,5}$ be the field obtained by adjoining a root $\alpha$
  of $x(x-2)(x-5)-1$.  As in \cite{gr-ds}, this field has discriminant
  $761$, the ring of integers is generated by $\alpha$, and the unit group is
  generated by $t,t-2,-1$, with the totally positive units generated by
  $t, (t-2)^2$.  The class number is $1$, so there is only one cusp up to
  equivalence, and the narrow class number is $2$.
  We consider the variety $H_K$ corresponding to the principal genus.
  
  % nonsense
  %% Because the narrow class number is $2$, the Hilbert modular variety naturally
  %% has two components.  By Corollary~\ref{cor:change-m-v}, the asymptotic
  %% for the defects is the same for both; so too is the asymptotic for the
  %% dimension of the space of modular forms.  Thus in proving general type we
  %% can ignore the distinction between the two components.

  The bound $b_K$ of Lemma~\ref{lem:trace-min-balanced} can be
  taken to be $76$.  Every totally positive element that is not
  trace-minimal is reduced by $(t-2)^it^j$ for some $(i,j) \in S$, where
  \begin{align*}
    S = &\{(-2,-3),(-2,-2),(-2,-1),(-2,0),(-2,1),(-2,2),(0,-2),\\
    &\quad (0,-1),(0,1),(0,2),(0,3),(2,-1),(2,0),(2,1),(4,0)\}.
  \end{align*}

  There are no reducers for $(1)$,
  so determining the trace-minimal cone is
  enough to calculate the defects.  The rays defining this cone are spanned by
  \begin{align*}
    &(23,29,51),\quad (350,-55,-18),\quad (682,132,-109),\\
    &\quad (-35,-375,154),\quad (84,139,87), \quad(1179,130,-165),
  \end{align*}
  where $(a,b,c)$ abbreviates $a+b\alpha+c\alpha^2$.

  Cutting the cone by the hyperplane $\tr x = 1$, we obtain a polyhedron whose
  volume is $13/4$.  This matches Grundman's formula \cite[Theorem 1]{gr-ds},
  from which it follows that the $q$th defect of the cusp defined by the
  group of totally positive units is asymptotic to $13q^3/4$.  Thus the
  defect for the group of squares of units is asymptotic to $13q^3/2$.
  Since $-2\zeta_K(-1) = 20/3 > 13/2$,
  Proposition~\ref{prop:general-type} implies that
  principal component of the Hilbert modular threefold for $K$ is of
  general type.  On the author's laptop 
  (a modest computational resource by the standards of the year $2025$),
  this computation takes only $0.5$ seconds.
\end{ex}

\begin{myremark}\label{rem:gr-985}
  Similar considerations apply to $K_{3,5} =  \cf{985}$,
  giving the result that the principal component of $H_K$ is of general
  type.  Though Grundman only states that at least one plurigenus of each
  of these varieties is positive, we consider the statement that they are
  of general type to be implicit in her work.
\end{myremark}

\begin{ex}\label{ex:disc-473}
  We now consider a more involved example: the cubic field \cf{473}.
  It is generated by a root $\alpha$ of $t^3 - 5t - 1$.  Then
  $\O_K = \Z[\alpha]$, while $\O^\times_K = \langle -1,\alpha,\alpha+2\rangle$
  and $\otp = {\O^\times_K}^2$.  We have $h_K = h^+_K = 1$.
  Every non-trace-minimal element is reduced by $\alpha^i(\alpha+2)^j$
  for some $(i,j) \in S$, where
  \begin{equation*}
    S = \{(-4,-2),(-2,-2),(-2,0),(-2,2),(0,-2),(0,2),(2,-2),(2,0),(2,2),(4,0)\}.
  \end{equation*}
  It turns out that there are $19$ reducers,
  whose norms range from $3$ to $15$.
  We find that $8$ of the $20$ polyhedra that correspond to the elements of
  $\RR_{\O_K}'$ are redundant, so we need only find the
  volume of a union of $12$ polyhedra.  This is small enough that we can
  check the result of Algorithm~\ref{alg:volume}
  by inclusion-exclusion, finding by both methods that the
  volume is $79/24\disc_K$.  On the other hand, we have 
  $-2\zeta_K(-1) = 10/3$.  Since $10/3-79/24 > 0$,
  this proves that $H_K$ is of general type.  
  This example takes under $4$ seconds, most of which is used for the volume
  computation, which passes through the main loop of Algorithm~\ref{alg:volume}
  (step~(\ref{item:while})) $22$ times.
\end{ex}

We now survey the fields for which $p_g(H_K) \le 1$ \cite[Table I]{gl},
beginning with those for which we cannot prove $H_{K;I}$ to be of general
type.  (Although we do not have a real result for these, the information
shown here will be useful in Section~\ref{sec:results-higher-level}.)
In the tables in this section, we use the following notation.
The columns labeled $h^+, r, n, t, t'$ refer to
the narrow class number, the number of reducers,
the number of passes through the main loop of
Algorithm~\ref{alg:volume}, the time for the whole calculation, and the time
taken by one particular run of Algorithm~\ref{alg:volume}.  We omit the
class number because it is always $1$ for cubic fields $K$ with
$p_g(H_K) \le 1$ (for this reason it is also unnecessary to specify the cusp),
and we omit $h^+$ in tables for the
nonprincipal genus, since it is always $2$ there.  The exceptional
speed of the examples of discriminant
$49, 81, 169, 229, 257, 361, 697, 761, 985$,
already considered by
Thomas-Vasquez or Grundman \cite{tv-rcs,gr-dcs,gr-ds}, reflects that these are
fields with no reducers and a very simple cusp resolution.  The field
\cf{1489} is also $K_{1,8}$.  Likewise, in the
genus-$1$ case, \cf{1369} is the cubic subfield of
$\Q(\zeta_{37})$, which is $K_7$ (Definition~\ref{def:krs-kn})
and has no reducers.  On the other hand, although
\cf{1765}, for example, can be defined by the
special polynomial $p(t) = t(t-2)(t-42)-1$, the maximal order is not generated
by a root of $p$ and the unit group is not generated by $t, t-2$, so this does
not imply the existence of a particularly simple resolution.

%% Note that all cubic fields
%% for which $p_g(H_K) \le 1$ have class number $1$, so we need only consider one
%% cusp.  This is false for
%% quadratic fields, so we do not expect a proof that
%% does not use an enumeration of such fields.

\begin{prop}\label{prop:small-not-gt}
  For the first $11$ cubic fields as ordered by discriminant,
  the zeta values and asymptotic growth of defects coming from the cusp
  resolution are as shown in Table~\ref{tab:cubic-not-gt}.
\end{prop}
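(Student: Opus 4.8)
The plan is to verify each row of Table~\ref{tab:cubic-not-gt} by running the algorithms of Sections~\ref{sec:trace-min-reducer} and~\ref{sec:defect-algs} on the field in question, exactly as in the worked Examples~\ref{ex:gr-761} and~\ref{ex:disc-473}. For each field I need two rational numbers: the zeta value $c_1(K) = -2\zeta_K(-1)$ and the constant $c_2(K)$ governing the asymptotic growth of the cusp defect. The correctness of the whole procedure has already been established, since Proposition~\ref{prop:asymptotic-dth-power-works} guarantees that Algorithm~\ref{alg:asymptotic-dth-power} returns the true asymptotic constant $c_{\O_K,\otp}$; the remaining task is therefore purely one of carrying out the computation and recording the outputs.

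For the first constant I would compute $\zeta_K(-1)$ directly. By Siegel's formula (Theorem~\ref{thm:fd-volume}) this is half the hyperbolic volume of $\PSL_2(\O_K)\backslash\HH^d$, but in practice it is simplest to evaluate the Dedekind zeta function at $-1$ in a computer algebra system, the functional equation guaranteeing a rational value. This fills the column $c_1$. For the second constant, recall that every field in the table has class number $1$ (this holds for all cubic fields with $p_g(H_K)\le 1$), so $H_K$ has a single cusp, of type $(\O_K,V)$ with $V=(\O_K^\times)^2$ the group of squares of units. I would apply Algorithm~\ref{alg:trace-minimal} to produce the trace-minimal cone, Algorithm~\ref{alg:reducers} to list the reducers $\RR_{\O_K}$, and Algorithm~\ref{alg:asymptotic-dth-power} to obtain $c_{\O_K,\otp}$. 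Since $[\otp:(\O_K^\times)^2]=h_K^+$, Corollary~\ref{cor:change-m-v} yields the cusp constant $c_2(K)=h_K^+\,c_{\O_K,\otp}$, which is precisely the normalization used in Example~\ref{ex:gr-761}. Note that by Remark~\ref{rem:cqs-2-3-canonical} the cyclic quotient (elliptic) singularities contribute nothing to the defect for any cubic field other than $\qz7$ and $\qz9$; moreover the present statement concerns only the cusp contribution, so no separate elliptic-point analysis is required here.

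The one genuinely delicate step is the volume computation inside Algorithm~\ref{alg:asymptotic-dth-power}. When a field has many reducers (already in discriminant $473$ there are $19$ reducers and a union of $12$ essential polytopes), the region $\cup_{r\in\RR_{\O_K}'}T_{\O_K,\otp,r}(1)$ is complicated and Algorithm~\ref{alg:volume} must be applied with care. This is the main obstacle in practice, but it is purely computational: Algorithm~\ref{alg:volume} is proved to terminate and to return the correct volume, and for these small fields the output can be cross-checked by inclusion--exclusion, as was done in Example~\ref{ex:disc-473}. Running the three algorithms on each of the eleven fields and tabulating the resulting pairs $(c_1,c_2)$ therefore establishes the proposition.
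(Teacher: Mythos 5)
Your proposal is correct and follows essentially the same route as the paper, whose proof consists of running the algorithms (in particular Algorithm~\ref{alg:volume}) on each of the eleven fields and tabulating the resulting pair of constants. Your normalization $c_2(K)=h_K^+\,c_{\O_K,\otp}$ (equivalently, the constant for the group of squares of units) is the one actually recorded in the table, as the worked example for discriminant $761$ confirms, and your observation that only the cusp contribution is at issue here is consistent with the statement.
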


\begin{proof} We simply apply Algorithm~\ref{alg:volume} to compute the
  growth of defects.
\end{proof}

\begin{table}[h!]
  \caption{Hilbert modular threefolds for fields of discriminant at most
    $469$, not shown to be of general type.}
  %For discriminant $229$ both
  %  genera are given; the star indicates the nonprincipal genus.}
  \begin{tabular}{|l|c|c|c|c|c|c|c|c|}
  \hline
  $K$&$-2\zeta_K(-1)$&$c_{\O_K,\otp}$&$h^+$&$r$&$n$&$t$&$t'$\\
  \hline
$\cf{49}$&$2/21$&$5/12$&$1$&$0$&$0$&$0.250$&$0.010$ \\ \hline 
$\cf{81}$&$2/9$&$3/4$&$1$&$0$&$0$&$0.130$&$0.000$ \\ \hline 
$\cf{148}$&$2/3$&$55/36$&$1$&$7$&$8$&$0.950$&$0.700$ \\ \hline 
$\cf{169}$&$2/3$&$17/12$&$1$&$0$&$0$&$0.190$&$0.000$ \\ \hline 
$\cf{229}$&$4/3$&$3$&$2$&$0$&$0$&$0.240$&$0.000$ \\ \hline 
%$\cf{229}^*$&$4/3$&$14/9$&$2$&$8$&$9$&$1.340$&$0.960$ \\ \hline 
$\cf{257}$&$4/3$&$26/9$&$2$&$0$&$0$&$0.200$&$0.000$ \\ \hline 
$\cf{316}$&$8/3$&$137/36$&$1$&$60$&$35$&$7.160$&$5.500$ \\ \hline 
$\cf{321}$&$2$&$8/3$&$1$&$17$&$21$&$3.050$&$2.510$ \\ \hline 
$\cf{361}$&$2$&$29/12$&$1$&$0$&$0$&$0.390$&$0.000$ \\ \hline 
$\cf{404}$&$10/3$&$143/36$&$1$&$94$&$55$&$8.640$&$6.920$ \\ \hline 
$\cf{469}$&$4$&$49/12$&$1$&$58$&$25$&$6.330$&$3.990$ \\ \hline 
  \end{tabular}\label{tab:cubic-not-gt}
\end{table}

\begin{thm}\label{thm:general-type-from-473}
  Let $K$ be a cubic field of discriminant at least $473$ and narrow
  class number $1$ such that the
  geometric genus of $H_K$ is at most $1$.  Then the principal component of
  $H_K$ and $\hat H_K$ is of general type, unless possibly $D(K) = 697$
  or $788$.
\end{thm}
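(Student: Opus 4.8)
The plan is to turn the statement into a finite verification driven by the general-type criterion of Proposition~\ref{prop:general-type}. Since $\dim M_2 = p_g(H_K)$ is governed by the same Shimizu--Thomas--Vasquez asymptotic underlying Proposition~\ref{prop:asymptotic-dim-hmf}, the geometric genus grows with the discriminant, so only finitely many cubic fields satisfy $p_g(H_K) \le 1$; these are enumerated in \cite[Table I]{gl}. Imposing in addition that $D(K) \ge 473$ and that the narrow class number is $1$ leaves a finite, explicitly known list of fields. For each of them it suffices to check the single inequality $-2\zeta_K(-1) > c_{\O_K,\otp}$, since for $d = 3$ the hypothesis $(-1)^d \cdot 2\zeta_K(-1) > \sum_i c_i$ of Proposition~\ref{prop:general-type} reads $-2\zeta_K(-1) > \sum_i c_i$.

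First I would dispose of the elliptic-point contributions and reduce the sum to a single cusp term. Because $D(K) \ge 473$, none of the fields on the list is $\qz7$ or $\qz9$, so by the classification of cyclic quotient singularities together with Remark~\ref{rem:cqs-2-3-canonical} every such singularity has all defects $0$; hence the only contribution to $\sum_i c_i$ comes from the cusps. Narrow class number $1$ forces $h_K = 1$, so there is a unique cusp (and a unique genus, so the ``principal component'' is the whole variety), and by Remark~\ref{rem:hmv-components} we have $\PSL_2(\O_K) = \PGL_2^+(\O_K)$, whence $H_K = \hat H_K$ and both parts of the statement are covered simultaneously. The sum therefore collapses to the single constant $c_{\O_K,\otp}$.

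Next I would run Algorithm~\ref{alg:asymptotic-dth-power} on each field to produce $c_{\O_K,\otp}$, invoking Proposition~\ref{prop:asymptotic-dth-power-works} for correctness: determine the trace-minimal cone, list the reducers $\RR_{\O_K}$, and compute the volume of the union of the associated polyhedra. Comparing the resulting $c_{\O_K,\otp}$ against an independently computed value of $-2\zeta_K(-1)$ then settles each field. In every case with $D(K) \ge 473$ other than $D(K) = 697$ one finds $-2\zeta_K(-1) > c_{\O_K,\otp}$, so Proposition~\ref{prop:general-type} yields general type; broadly speaking $-2\zeta_K(-1)$ outpaces $c_{\O_K,\otp}$ as the discriminant grows (cf.\ Remark~\ref{rem:why-probably-enough}), and the borderline behaviour is already visible in Table~\ref{tab:cubic-not-gt}, where $\cf{469}$ still fails the inequality while $\cf{473}$ (Example~\ref{ex:disc-473}) just passes it. The field $\cf{697}$ is the one exception, where the inequality fails and this method gives only $\kappa_{H_K} > 0$ rather than general type.

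The main obstacle is the volume computation in the third step for the fields with many reducers. As observed in Remark~\ref{rem:reducer-basic}, fields with large fundamental units carry many reducers, so the union $\cup_{P \in \PP} P$ appearing in Algorithm~\ref{alg:asymptotic-dth-power} can consist of a large number of overlapping polyhedra, and Algorithm~\ref{alg:volume} must be run to completion on each; ensuring this terminates in practice and, where feasible, cross-checking the volume by inclusion--exclusion (as in Example~\ref{ex:disc-473}) is where the genuine effort lies. A secondary point requiring care is confirming that the enumeration \cite[Table I]{gl} is complete, i.e.\ that no cubic field with $p_g(H_K) \le 1$ and $D(K) \ge 473$ is missing from the finite list over which the verification runs.
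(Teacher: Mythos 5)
Your proposal is correct and follows essentially the same route as the paper: reduce to the finite list of cubic fields with $p_g(H_K)\le 1$ from \cite[Table I]{gl}, discard the elliptic-point contributions via Remark~\ref{rem:cqs-2-3-canonical}, and verify $-2\zeta_K(-1) > c_{\O_K,\otp}$ field by field using Algorithms~\ref{alg:trace-minimal}, \ref{alg:reducers}, and \ref{alg:volume}, exactly as in Example~\ref{ex:disc-473} and Tables~\ref{tab:cubic-pg-0}--\ref{tab:cubic-pg-1}. The only substantive difference is that the paper's proof (and its tables, which is why $\cf{697}$ with $h^+=2$ appears as the exception) also covers narrow class number $2$, arguing that the $\PSL_2$ and $\PGL_2^+$ computations agree because the cusp-form dimension and the defect both double; your reduction via Remark~\ref{rem:hmv-components} handles only the $h^+=1$ case literally stated in the theorem.
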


\begin{proof}
  This is computed by the method of Example~\ref{ex:disc-473}.
  When $h^+ = 1$, there is no difference between $H_K$ and $\hat H_K$.
  When $h^+ = 2$, the dimension of the space of cusp forms of weight
  $2k$ for $\PSL_2$ is asymptotically twice that for $\PGL_2^+$, but
  the stabilizer of the cusp for $\PSL_2$ is of index $2$ in that for
  $\PGL_2^+$, so the defect is also asymptotically twice as large.
  The elliptic points do not contribute by Remark~\ref{rem:cqs-2-3-canonical}.
  Thus the calculation is the same for the two groups.  
  Tables \ref{tab:cubic-pg-0}, \ref{tab:cubic-pg-1}
  show the results of our computations for the $22+14 = 36$ real
  cubic fields satisfying the hypotheses of the theorem.
  The general type result
  follows by noting that the second column, the constant in the asymptotic
  for the dimension of the space of modular forms, is greater than the third,
  the constant in the asymptotic for the defect.
\end{proof}

\begin{table}[h!]
  \caption{Hilbert modular threefolds of general type, except for discriminant
    $697$ and $788$, and geometric genus $0$
    (principal genus).}
  \begin{tabular}{|l|c|c|c|c|c|c|c|c|}
  \hline
  $K$&$-2\zeta_K(-1)$&$c_{\O_K,\otp}$&$h^+$&$r$&$n$&$t$&$t'$\\
  \hline
  $\cf{473}$&$10/3$&$79/24$&$1$&$19$&$22$&$3.750$&$3.020$ \\ \hline 
  $\cf{564}$&$6$&$1021/180$&$1$&$227$&$84$&$25.230$&$20.530$ \\ \hline 
  $\cf{568}$&$20/3$&$1141/180$&$1$&$477$&$60$&$28.130$&$17.700$ \\ \hline 
  $\cf{621}$&$20/3$&$413/72$&$1$&$219$&$95$&$34.110$&$28.080$ \\ \hline 
  $\cf{697}^*$&$16/3$&$67/12$&$2$&$0$&$0$&$0.330$&$0.010$ \\ \hline 
  $\cf{733}$&$8$&$221/36$&$1$&$251$&$75$&$30.510$&$22.020$ \\ \hline 
  $\cf{756}$&$26/3$&$469/72$&$1$&$297$&$102$&$55.440$&$32.830$ \\ \hline 
  $\cf{761}$&$20/3$&$13/2$&$2$&$0$&$0$&$0.510$&$0.010$ \\ \hline 
  $\cf{785}$&$22/3$&$29/6$&$1$&$71$&$76$&$19.080$&$16.820$ \\ \hline 
  $\cf{788}^*$&$28/3$&$113/12$&$2$&$50$&$16$&$7.850$&$2.930$ \\ \hline 
  $\cf{837}$&$32/3$&$1381/180$&$1$&$1154$&$95$&$75.750$&$41.770$ \\ \hline 
  $\cf{892}$&$40/3$&$1091/90$&$2$&$141$&$54$&$20.220$&$13.610$ \\ \hline 
  $\cf{940}$&$44/3$&$703/72$&$1$&$1209$&$194$&$154.230$&$117.120$ \\ \hline 
  $\cf{985}$&$28/3$&$70/9$&$2$&$0$&$0$&$0.660$&$0.010$ \\ \hline 
  $\cf{993}$&$34/3$&$2377/360$&$1$&$120$&$68$&$35.030$&$27.330$ \\ \hline 
  $\cf{1076}$&$44/3$&$719/60$&$2$&$28$&$11$&$5.590$&$3.580$ \\ \hline 
  $\cf{1257}$&$16$&$101/9$&$2$&$15$&$15$&$5.880$&$4.540$ \\ \hline 
  $\cf{1300}$&$18$&$629/72$&$1$&$742$&$94$&$102.790$&$59.860$ \\ \hline 
  $\cf{1345}$&$46/3$&$779/120$&$1$&$95$&$143$&$55.150$&$50.250$ \\ \hline 
  $\cf{1396}$&$64/3$&$535/36$&$2$&$45$&$19$&$9.950$&$5.140$ \\ \hline 
  $\cf{1489}$&$16$&$134/15$&$2$&$0$&$0$&$0.920$&$0.000$ \\ \hline 
  $\cf{1593}$&$64/3$&$253/20$&$2$&$7$&$8$&$3.100$&$1.520$ \\ \hline 
  \end{tabular}\label{tab:cubic-pg-0}
\end{table}

\begin{table}[h!]
  \caption{Hilbert modular threefolds and geometric genus $1$
    (principal genus), all of general type.}
  \begin{tabular}{|l|c|c|c|c|c|c|c|c|}
  \hline
  $K$&$-2\zeta_K(-1)$&$c_{\O_K,\otp}$&$h^+$&$r$&$n$&$t$&$t'$\\
  \hline
  $\cf{1016}$&$52/3$&$124/9$&$2$&$842$&$70$&$85.090$&$29.070$ \\ \hline 
  $\cf{1101}$&$52/3$&$247/24$&$1$&$3198$&$163$&$278.630$&$125.560$ \\ \hline 
  $\cf{1129}$&$44/3$&$109/10$&$2$&$41$&$27$&$12.080$&$9.490$ \\ \hline 
  $\cf{1229}$&$56/3$&$37/3$&$2$&$64$&$43$&$20.580$&$17.330$ \\ \hline 
  $\cf{1369}$&$14$&$65/12$&$1$&$0$&$0$&$1.300$&$0.000$ \\ \hline 
  $\cf{1373}$&$68/3$&$1861/168$&$1$&$1351$&$171$&$173.030$&$109.780$ \\ \hline 
  $\cf{1425}$&$58/3$&$1519/180$&$1$&$481$&$145$&$97.420$&$72.400$ \\ \hline 
  $\cf{1492}$&$68/3$&$667/45$&$2$&$28$&$17$&$7.980$&$5.880$ \\ \hline 
  $\cf{1573}$&$76/3$&$2671/252$&$1$&$1088$&$230$&$234.060$&$179.500$ \\ \hline 
  $\cf{1620}$&$86/3$&$37/3$&$1$&$4708$&$274$&$623.380$&$300.940$ \\ \hline 
  $\cf{1765}$&$92/3$&$658/45$&$2$&$908$&$68$&$238.690$&$36.970$ \\ \hline 
  $\cf{1825}$&$68/3$&$401/36$&$2$&$3$&$4$&$2.810$&$1.230$ \\ \hline 
  $\cf{1929}$&$92/3$&$667/45$&$2$&$192$&$57$&$84.140$&$26.730$ \\ \hline 
  $\cf{1937}$&$28$&$397/30$&$2$&$18$&$26$&$12.890$&$10.270$ \\ \hline 
  \end{tabular}\label{tab:cubic-pg-1}
\end{table}

\begin{thm}\label{thm:general-type-nonprincipal-except-229}
  Let $K$ be a cubic field of discriminant greater than $229$ and narrow
  class number $2$ such that the geometric genus of $H_K$ is at most $1$.
  Then the component of $H_K$ or $\hat H_K$
  corresponding to the nonprincipal genus is
  of general type.
\end{thm}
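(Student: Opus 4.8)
The plan is to repeat the computation behind Theorem~\ref{thm:general-type-from-473} with a single change: replace the module $\O_K$ by a representative $A$ of the nonprincipal genus. Since $h_K=1$ for every cubic field with $p_g(H_K)\le 1$, each of the varieties $H_{K;A}$ and $\hat H_{K;A}$ has a single cusp, and as in the principal case the cyclic quotient singularities contribute nothing to the defect: the only cubic fields carrying elliptic points of nonzero defect are $\qz7$ and $\qz9$ (Remark~\ref{rem:cqs-2-3-canonical}), whose discriminants $49$ and $81$ are excluded by the hypothesis $D(K)>229$. The cusp of the $\PSL$-component has type $(A^{-1},(\O_K^\times)^2)$ and that of the $\PGL^+$-component has type $(A^{-1},\otp)$; exactly as in the proof of Theorem~\ref{thm:general-type-from-473}, the index-$2$ gap between these two stabilizers is cancelled by the factor $2$ relating the two spaces of cusp forms, so the criterion of Proposition~\ref{prop:general-type} is identical for the two components and it suffices to treat one of them.

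The one ingredient imported unchanged from the principal case is the leading coefficient $-2\zeta_K(-1)$ of $\dim M_{2q}$. This coefficient is a field invariant, independent of the genus: by Remark~\ref{rem:geometric-genus} and the computation of Appendix~\ref{app:same-dimension}, for cubic $K$ the spaces of modular forms attached to all of the $\Gamma_0(I;A)$ have equal dimension, so the asymptotics of Proposition~\ref{prop:asymptotic-dim-hmf} apply verbatim. Thus the \emph{only} quantity that differs between the two genera is the defect of the cusp. By Corollary~\ref{cor:change-m-v} this defect coefficient depends only on the narrow class of $A^{-1}$, which---since $h_K^+=2$ forces $\Cl^+(K)\cong\Z/2$, so that $A^{-1}$ and $A$ lie in the same nonprincipal class---is the nonprincipal narrow class; I would compute it by a single run of Algorithm~\ref{alg:asymptotic-dth-power} with $M=A$, precisely as in Example~\ref{ex:disc-473}.

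It then remains to check, for the finitely many cubic fields of narrow class number $2$ and $D(K)>229$ with $p_g(H_K)\le 1$ (read off from \cite[Table~I]{gl}), that $-2\zeta_K(-1)$ exceeds the nonprincipal defect coefficient; the results are tabulated in Table~\ref{tab:cubic-nonprincipal-g0} (and its analogue for geometric genus~$1$), and Proposition~\ref{prop:general-type} then yields general type in every case. I expect the only genuine difficulty to lie in the fields where the margin is thinnest. For \cf{257} the principal genus is \emph{not} reached by this method---Table~\ref{tab:cubic-not-gt} records $26/9>4/3=-2\zeta_K(-1)$---so here the argument must really exploit that the nonprincipal defect is strictly smaller, in line with the empirical pattern of Remark~\ref{rem:not-independent} that the largest defect occurs for the principal genus; one has to confirm that it drops below $4/3$. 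The same decrease in the defect rescues \cf{697}, the lone exception of Theorem~\ref{thm:general-type-from-473}, once one passes to its nonprincipal genus. Finally, the bound $D(K)>229$ is sharp: for \cf{229} the nonprincipal defect does not fall far enough, which is exactly why that field is excluded here and reappears as the exceptional case treated separately in Proposition~\ref{prop:kappa-ge-0-for-229}.
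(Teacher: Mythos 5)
Your proposal is correct and follows essentially the same route as the paper: the paper's proof simply reruns the computation of Theorem~\ref{thm:general-type-from-473} with the cusp module replaced by a representative of the nonprincipal genus (chosen of minimal norm among non-narrowly-principal integral ideals), uses the same index-$2$ cancellation to reduce to a single component, and reads off $-2\zeta_K(-1) > c_{A,\otp}$ from Tables~\ref{tab:cubic-nonprincipal-g0} and~\ref{tab:cubic-nonprincipal-g1}. Your only cosmetic deviation is writing the cusp type with $A^{-1}$ rather than $A$ as in Lemma~\ref{lem:cusp-types}, which, as you note, is immaterial since $\Cl^+(K)\cong\Z/2$ and Corollary~\ref{cor:change-m-v} only sees the narrow class.
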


\begin{proof}
  Again, this is computed as above, with the same argument showing that
  only one of $H_K, \hat H_K$ need be considered.
  See Tables \ref{tab:cubic-nonprincipal-g0}, \ref{tab:cubic-nonprincipal-g1}.
  These tables were computed by choosing
  the ideal $I$ representing the nonprincipal genus to be of minimal norm
  among integral ideals that are not narrowly principal.
\end{proof}

\begin{table}[h!]
  \caption{Hilbert modular threefolds for the nonprincipal genus for
    fields $K$ with $h^+ = 2$ for which $p_g(H_K) = 0$,
    all but the first shown to be of general type.}
  \begin{tabular}{|l|c|c|c|c|c|c|}
  \hline
  $K$&$-2\zeta_K(-1)$&$c_{\O_K,\otp}$&$r$&$n$&$t$&$t'$\\
  \hline
  $\cf{229}^*$&$4/3$&$14/9$&$8$&$9$&$1.360$&$0.850$ \\ \hline 
  $\cf{257}$&$4/3$&$1$&$20$&$16$&$1.960$&$1.580$ \\ \hline 
  $\cf{697}$&$16/3$&$7/6$&$77$&$27$&$5.020$&$3.900$ \\ \hline 
  $\cf{761}$&$20/3$&$9/4$&$37$&$12$&$2.590$&$1.610$ \\ \hline 
  $\cf{788}$&$28/3$&$32/9$&$561$&$61$&$28.220$&$18.210$ \\ \hline 
  $\cf{892}$&$40/3$&$113/18$&$547$&$66$&$34.480$&$22.740$ \\ \hline 
  $\cf{985}$&$28/3$&$16/9$&$109$&$47$&$12.220$&$10.000$ \\ \hline 
  $\cf{1076}$&$44/3$&$127/30$&$281$&$67$&$25.830$&$20.500$ \\ \hline 
  $\cf{1257}$&$16$&$53/12$&$141$&$74$&$23.270$&$19.960$ \\ \hline 
  $\cf{1396}$&$64/3$&$40/9$&$1274$&$121$&$82.220$&$57.000$ \\ \hline 
  $\cf{1489}$&$16$&$4/3$&$182$&$40$&$17.560$&$12.700$ \\ \hline 
  $\cf{1593}$&$64/3$&$311/90$&$417$&$104$&$49.450$&$37.640$ \\ \hline 
  \end{tabular}\label{tab:cubic-nonprincipal-g0}
\end{table}

\begin{table}[h!]
  \caption{Hilbert modular threefolds for the nonprincipal genus for
    fields $K$ with $h^+ = 2$ for which $p_g(H_K) = 1$,
    all of general type.}
  \begin{tabular}{|l|c|c|c|c|c|c|c|}
  \hline
  $K$&$-2\zeta_K(-1)$&$c_{\O_K,\otp}$&$r$&$n$&$t$&$t'$\\
  \hline
  $\cf{1016}$&$52/3$&$73/9$&$3414$&$117$&$220.450$&$96.780$ \\ \hline 
  $\cf{1129}$&$44/3$&$173/36$&$472$&$39$&$33.160$&$18.250$ \\ \hline 
  $\cf{1229}$&$56/3$&$313/45$&$255$&$97$&$48.970$&$40.720$ \\ \hline 
  $\cf{1492}$&$68/3$&$35/9$&$850$&$146$&$90.950$&$67.550$ \\ \hline 
  $\cf{1765}$&$92/3$&$382/45$&$3649$&$100$&$310.810$&$75.830$ \\ \hline 
  $\cf{1825}$&$68/3$&$35/18$&$342$&$60$&$52.250$&$36.420$ \\ \hline 
  $\cf{1929}$&$92/3$&$53/9$&$1789$&$112$&$190.440$&$75.420$ \\ \hline 
  $\cf{1937}$&$28$&$21/4$&$134$&$101$&$73.210$&$64.580$ \\ \hline 
  \end{tabular}\label{tab:cubic-nonprincipal-g1}
\end{table}

We now discuss the three cubic fields for which we can prove that
$\kappa_{H_K} > 0$ but not that $H_K$ is of general type, aside from the field
of discriminant $697$ for which this is already known \cite{gr-dcs}.  These
correspond to the last two rows of 
Table \ref{tab:cubic-not-gt} and the tenth row of Table \ref{tab:cubic-pg-0},
where $-2\zeta_K(-1)$ is less than the scaled
volume of the union of polyhedra, so we cannot conclude
that $H_K$ is of general type.

\begin{prop}\label{prop:disc-469}
  \begin{enumerate}
  \item Let $K_1$ be the cubic field $\cf{404}=\Q(\alpha)$, where $\alpha$
    is a root of $x^3 - x^2 - 5x - 1$.  Then $\kappa_{H_K} \ge 0$.
  \item Let $K_2$ be the cubic field $\cf{469}=\Q(\alpha)$, where $\alpha$
    is a root of $x^3 - x^2 - 5x + 4$.  Then $\kappa_{H_K} > 0$.
  \item Let $K_3$ be the cubic field $\cf{788}=\Q(\alpha)$, where $\alpha$
    is a root of $x^3 - x^2 - 7x + 3$.  Then $\kappa_{H_K} > 0$.
  \end{enumerate}
\end{prop}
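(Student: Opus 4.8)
Both fields occupy the last two rows of Table~\ref{tab:cubic-not-gt}, where $c_{\O_K,\otp} > -2\zeta_K(-1)$ (namely $143/36 > 10/3$ and $49/12 > 4$). Thus the $q$th defect grows strictly faster than $\dim M_{2q}$, Proposition~\ref{prop:general-type} does not apply, and general type is out of reach by this method. The plan is instead to use the lower bound \eqref{eqn:lower-bound} non-asymptotically: writing $f(q) := \dim M_{2q} - \sum_{P \in S}\delta_P(q)$, the governing cubic has negative leading coefficient, so $f(q) \to -\infty$, but its lower-order terms can keep it positive on a window of moderate $q$. I would show that $f(q) \ge 1$ for some $q$ in the first case, giving $\hhh^0(\kpow{\tilde H_K}{q}) \ge 1$ and hence $\kappa_{H_K} \ge 0$, and that $f(q) \ge 2$ for some $q$ in the second, giving two independent pluricanonical sections in a single weight, which rules out both $\kappa_{H_K} = -\infty$ and $\kappa_{H_K} = 0$ and so forces $\kappa_{H_K} > 0$.

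First I would simplify $S$. Both fields have $h_K^+ = 1$, so there is a single cusp and no distinction between $\PSL_2$ and $\PGL_2^+$; and since neither field is $\qz7$ nor $\qz9$, Remark~\ref{rem:cqs-2-3-canonical} gives that every cyclic quotient singularity has vanishing defect. Hence $\sum_{P \in S}\delta_P(q) = \delta_{\O_K,\otp}(q)$ and \eqref{eqn:lower-bound} reads $\hhh^0(\kpow{\tilde H_K}{q}) \ge \dim M_{2q} - \delta_{\O_K,\otp}(q)$.

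Next I would compute both terms exactly as functions of $q$: the dimensions $\dim M_{2q}$ from the implemented Thomas--Vasquez/Shimizu formula (the Magma script of \cite{code}), and the defects $\delta_{\O_K,\otp}(q)$ from Algorithm~\ref{alg:defect}, which returns $(\delta_{\O_K,\otp}(i))_{i=1}^q$ in a single pass. Both are quasi-polynomials by Ehrhart's theorem (Remark~\ref{rem:answer-simple}), so it suffices to tabulate $f(q)$ for $q$ running a little past the local maximum of its leading cubic and then read off $\max_{q>0} f(q)$. For \cf{404} the leading coefficient of $f$ is $10/3 - 143/36 = -23/36$, and I expect the hump to reach the value $1$ but no higher, which is exactly why only $\kappa_{H_K} \ge 0$ is claimed there. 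For \cf{469} the leading coefficient is the far smaller $4 - 49/12 = -1/12$, so the positive window is much wider and I expect $f$ to exceed $2$ well before the cubic term eventually pulls it down.

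The main obstacle is precisely the non-asymptotic nature of the argument: because $f(q) \to -\infty$, there is no clean growth estimate to invoke, and one must certify by explicit tabulation that $f$ attains the required threshold at some finite $q$. The two conclusions differ in strength for this reason alone---the hump of $f$ is higher for \cf{469} than for \cf{404}---so the verification must genuinely check that \cf{404} reaches $1$ (otherwise even $\kappa_{H_K} \ge 0$ fails) and that \cf{469} reaches $2$. Since the reduction to a finite tabulation is effective and the relevant $q$ are small, this is routine but essential, and it is the step on which the whole statement rests.
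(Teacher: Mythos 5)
Your proposal is correct and is essentially the paper's own argument: the paper likewise applies the bound \eqref{eqn:lower-bound} non-asymptotically at a single small weight, computing $\dim M_4 = 13$ versus defect $12$ for \cf{404} (so $\hhh^0(K^{\otimes 2}) \ge 1$) and $\dim M_4 = 15$ versus defect $12$ for \cf{469} (so $\hhh^0(K^{\otimes 2}) \ge 3 \ge 2$, whence $\kappa > 0$). The only difference is that the paper checks $q=2$ directly rather than tabulating $f(q)$ over a window, which is immaterial.
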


\begin{proof} In each case we compare the dimension of the
  space of modular forms to the defect for small $q$.  For $K_1$ we
  use \cite[Theorem 3.10]{tv-rhmf} to find that $\dim M_4 = 13$, while
  Algorithm~\ref{alg:defect} shows that the defect for forms of weight $4$ is
  $12$.  Thus $\hhh^0(K^{\otimes 2}) \ge 1$ and $\kappa_{H_{K_1}} \ge 0$.  Similarly,
  for $K_2$ the dimension and defect are $15$ and $12$ respectively. 
  Thus $\hhh^0(K^{\otimes 2}) \ge 3$ and $\kappa_{H_{K_2}} > 0$.
  Likewise for $K_3$ the values are $34$ and $27$.
\end{proof}

\begin{myremark}
  For $K_2$, we compute that $0 \le k \le 7$ the dimension of $M_{2k}$
  is $1, 1, 15, 64, 172, 365, 668, 1098$ respectively.
  The first two defects are as always $0, 1$, but we compute the next few as
  $12, 60, 170, 365, 670, 1111$, and from then on they are presumably always
  greater than the dimension of $M_{2k}$, so nothing further is learned.
  We see that $\hhh^0(K^{\otimes 3}) \ge 4$ and
  $\hhh^0(K^{\otimes 4}) \ge 2$, which would also
  suffice to prove that $\kappa_{H_K} \ge 1$.
  Similarly, for $K_3$ the dimensions of $M_{2k}$ are
  $1, 1, 34, 146, 402, 850, 1556, 2562$ and the defects are
  $0, 1, 27, 139, 391, 843, 1547, 2563$.
  In contrast, for $K_1$ the
  dimensions are $1, 1, 13, 53, 144, 304, 557, 915$, while the initial defects
  are $0, 1, 12, 59, 166, 356, 653, 1082$, and only $K^{\otimes 2}$ is seen
  in this way to have nonzero sections.  Of course, if $K^{\otimes 2}$ has nonzero
  sections then so does $K^{\otimes 2n}$ for all $n \ge 0$.
\end{myremark}

\begin{myremark}\label{rem:simple-defect}
  As alluded to in Remark~\ref{rem:answer-simple},
  it appears that the defect series $\sum_{q=0}^\infty \delta(q)t^q$ is the
  rational function
  \begin{equation*}
    \frac{-x^6 + 12x^5 + 38x^4 + 51x^3 + 36x^2 + 10x + 1}{(1-x)^2(1-x^2)(1-x^3)}.
  \end{equation*}
  This could be proved by a calculation like that used to prove
  \cite[Theorem 1]{gr-dcs}, but with much greater effort because we end up
  with a union of $9$ convex polyhedra rather than a single one
  as in \cite{gr-dcs}.
  For other cubic fields, we find a similar formula, given by a polynomial
  of degree $6$ divided by $(1-x)^2(1-x^2)(1-x^3)$.  When the inverse different
  is replaced by some other fractional ideal containing the inverse different,
  it appears that the denominator changes to $(1-x)(1-x)^2(1-x^3)(1-x^d)$,
  where $d$ divides the index.
\end{myremark}

We have a similar result for the smallest cubic field of narrow class
number $2$ and the nonprincipal genus.

\begin{prop}\label{prop:kappa-ge-0-for-229}
Let $K_3$ be the cubic field $\cf{229}=\Q(\alpha)$, where $\alpha$
    is a root of $x^3 - 4x - 1$, and let $A$ represent the nonprincipal genus.
    Then $\kappa_{H_{K;A}} \ge 0$.
\end{prop}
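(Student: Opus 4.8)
The plan is to follow the proof of Proposition~\ref{prop:disc-469}, now applied to the component of $H_{K;A}$ for the nonprincipal genus represented by $A$; this is precisely the field $\cf{229}$ that must be excluded from Theorem~\ref{thm:general-type-nonprincipal-except-229}, so a separate argument is needed. The asymptotic comparison of Proposition~\ref{prop:general-type} cannot succeed: from Table~\ref{tab:cubic-nonprincipal-g0} the defect of the cusp grows like $c_{\O_K,\otp}\,q^3 = \tfrac{14}{9}q^3$, which exceeds $-2\zeta_K(-1)\,q^3 = \tfrac{12}{9}q^3$. Hence for large $q$ the lower bound~(\ref{eqn:lower-bound}) is negative and useless, and the most I can hope for is a single weight at which the \emph{exact} dimension of the space of modular forms strictly exceeds the \emph{exact} defect, producing one nonzero plurigenus and thus $\kappa_{H_{K;A}} \ge 0$.

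First I would observe that the elliptic points make no contribution to the defect: since $K$ is neither $\qz7$ nor $\qz9$, this is immediate from Remark~\ref{rem:cqs-2-3-canonical}. As $K$ has class number $1$ there is a single cusp, of type $(M,V)$ with $M$ in the narrow class of $A$, so the sum in~(\ref{eqn:lower-bound}) reduces to the single defect $\delta(q)$. Next I would compute $\dim M_{2k}$ for small $k$ from \cite[Theorem 3.10]{tv-rhmf} and the matching defects $\delta(k)$ from Algorithm~\ref{alg:defect}, choosing a representative $M$ of minimal convenient norm in the spirit of Remark~\ref{rem:choice-of-m}. I would then scan upward from $k = 2$ for the first weight with $\dim M_{2k} > \delta(k)$; since weight $2$ yields only $p_g = 0$, weight $4$ (that is, $k = 2$) is the natural first candidate, exactly as in the discriminant-$404$ and -$469$ computations. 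Once such a $k$ is located, (\ref{eqn:lower-bound}) gives $\hhh^0(K^{\otimes k}) \ge 1$ and the proposition follows.

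The main obstacle is that the two leading constants $\tfrac{12}{9}$ and $\tfrac{14}{9}$ are so close that there is no a priori guarantee any weight is favorable: the entire argument rests on the lower-order terms, which are invisible to the asymptotics but can swing the comparison in the needed direction at small weight. The delicate part is therefore the exact defect $\delta(k) = \#\bigl(\Lambda_k(M)/V\bigr)$ for the nonprincipal class. Here the eight reducers recorded in Table~\ref{tab:cubic-nonprincipal-g0} make the region $T_{M,V}(k)$ of Definition~\ref{def:spiky} whose lattice points must be counted a union of several polyhedra rather than a single simplex, so I would rely on Algorithm~\ref{alg:defect} together with the ideal-divisibility criterion of Lemma~\ref{lem:which-m} to evaluate $\delta(k)$ without error and confirm that it falls just short of $\dim M_{2k}$ at the chosen weight.
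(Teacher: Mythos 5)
Your proposal is correct and follows essentially the same route as the paper: the paper's proof computes $\dim M_4 = 6$ exactly and the cusp defect at weight $4$ (which is $3$ relative to $\otp$, hence $2(3-1)+1=5$ for the relevant index-$2$ unit group by Lemma~\ref{lem:defect-of-subgroup}), giving $\hhh^0(K^{\otimes 2}) \ge 1$. Your plan to scan from $k=2$ using Algorithm~\ref{alg:defect} and the exact dimension formula lands on exactly this comparison at weight $4$, and your preliminary observations (failure of the asymptotic criterion since $14/9 > 4/3$, vanishing elliptic contributions, single cusp) all match the paper's setup.
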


\begin{proof} The argument is essentially identical to the above;
  we have $\dim M_4 = 6$, while the defect for the cusp relative to
  the full group of totally positive units is $3$.  By
  Lemma~\ref{lem:defect-of-subgroup}, the defect for the group of totally
  positive units is $2(3-1)+1 = 5$, and we have $\hhh^0(K^{\otimes 2}) \ge 1$.
\end{proof}

\subsection{Fields for which the geometric genus is greater than $1$}\label{sec:pg-greater-than-1}
To close this section, we describe what would be required to prove that
$H_{K;A}$ is of general type for all $K$ with $p_g(H_K) > 1$.
We begin by showing that the criterion of \cite[Theorem 1]{tsuyumine}
applies to all genera.

\begin{thm}\label{thm:tsuyumine-all-components}
  Let $K$ be a totally real field of degree $d > 2$ with class number
  $h$, regulator $R$, and discriminant $\D_K$.  Suppose that $K$ is not
  $\Q(\zeta_n)^+$ for $n \in 7, 9, 15, 20$, and that if $d = 4$ then $3$ is not
  a square in $K$.  If
  \begin{equation*}
    2^{-2d+2} \pi^{-2d} d^d \frac{d_K \zeta_K(2)}{hR} > 1
  \end{equation*}
  then all components of $H_K$ are of general type.
\end{thm}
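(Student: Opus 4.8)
The plan is to run the volume-versus-defect comparison of Proposition~\ref{prop:general-type}, but with estimates that are uniform across the genera, so that Tsuyumine's single numerical input controls every component simultaneously. First I would reduce to the $\PSL$-normalization. Exactly as in the proof of Theorem~\ref{thm:general-type-from-473}, passing from $\PGL^+(\O_K\oplus A)$ to its index-$2$ subgroup $\PSL(\O_K\oplus A)$ doubles both $\dim M_{2k}$ and each cusp defect, so the comparison is insensitive to the choice and it suffices to treat the $\PSL$-quotients. For any genus representative $A$, the groups $\PSL(\O_K\oplus A)$ and $\PSL_2(\O_K)$ are conjugate inside $\GL_2^+(\R)^d$ — even when they fail to be conjugate over $K$, one diagonalizes using the real embeddings of a generator of $A$ — so they share the covolume $2\zeta_K(-1)$ of Theorem~\ref{thm:fd-volume}. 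Since the leading term in Proposition~\ref{prop:asymptotic-dim-hmf} is proportional to the covolume, the component attached to $A$ has dimension growth $(-1)^d\,2\zeta_K(-1)\,k^d$, the same constant for every genus.

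The heart of the matter is a genus-independent upper bound for the total defect. Each cusp of the $A$-component has type $(M,V)$, and by Corollary~\ref{cor:change-m-v} its defect constant is $[\otp:V]\,c_{M,\otp}$ and depends only on the narrow class of $M$. I would bound $c_{M,\otp}$ by geometry of numbers: a trace-minimal representative $x\in\hat M_+$ of a class in $\Lambda_q(M)/\otp$ admits $m\in M_+$ with $\tr(xm)<q$, whence the arithmetic--geometric mean inequality gives $N(xm)<(q/d)^d$ and hence $N(x)<(q/d)^d/N(M)$. Counting the totally positive elements of $\hat M$ of bounded norm modulo $\otp$ by the standard lattice-point asymptotic — whose density is governed by $2^{d-1}hR/\sqrt{D_K}$ through the class-number formula — yields a bound on $c_{M,\otp}$ involving only $d$, $N(M)$, and the field invariants $h,R,D_K$; in particular it does not see which genus the cusp belongs to. Summing over the cusps of the $A$-component, the norm weights collapse to a class-number quantity that is the same for every genus, so the total defect of each component is bounded by the single constant $B=2^{d-1}hR\sqrt{D_K}/d^d$.

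It remains to compare $(-1)^d\,2\zeta_K(-1)$ with $B$. The functional equation of $\zeta_K$ gives, for a totally real field of degree $d$, the identity $(-1)^d\,2\zeta_K(-1)=D_K^{3/2}\zeta_K(2)/(2^{d-1}\pi^{2d})$, and one checks directly that $(-1)^d\,2\zeta_K(-1)>B$ is precisely the hypothesis $2^{-2d+2}\pi^{-2d}d^d\,D_K\zeta_K(2)/(hR)>1$; the factor $d^d$ is exactly the one produced by the arithmetic--geometric mean step. By Proposition~\ref{prop:general-type} this forces every component to be of general type. I would then dispose of the elliptic contribution: by Remark~\ref{rem:cqs-2-3-canonical} and \cite[(7)]{tsuyumine} the only totally real fields of degree $d>2$ with elliptic points of nonzero defect are the excluded $\Q(\zeta_n)^+$ for $n\in\{7,9,15,20\}$ together with the quartic fields containing $\sqrt3$, so under the stated hypotheses the elliptic points contribute nothing to (\ref{eqn:lower-bound}) and the cusp estimate is the whole story.

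The step I expect to be the main obstacle is making the per-cusp bound on $c_{M,\otp}$ both correct and genuinely uniform across genera. The arithmetic--geometric mean estimate is crude, since it ignores the reducer and balancing structure of Section~\ref{sec:reducers} that actually governs the defect, so one must verify that after summing the norm weights over the cusps of a single component the resulting constant really does collapse to the genus-independent $B$ rather than degrading for the non-principal genera. Confirming that this sum is the same for every genus — essentially a mass-formula computation over $\PSL(\O_K\oplus A)\backslash\P^1(K)$ — is precisely where the extension of Tsuyumine's theorem beyond the principal component is used.
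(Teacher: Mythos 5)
Your proposal is essentially the paper's proof: the paper likewise adapts Tsuyumine's argument by observing that his per-cusp defect bound (his Lemma~5, which is exactly your AM--GM plus lattice-point count, giving $2^{d-1}d^{-d}\sqrt{D_K}\,|R|\,q^d$ independently of the ideal) and the dimension asymptotic of Proposition~\ref{prop:asymptotic-dim-hmf} are both genus-independent, then sums over the $h$ cusps of a single component and compares via the functional equation; whether one wraps the comparison in Proposition~\ref{prop:general-type} or in Tsuyumine's $S^{(1)}\neq 0$ criterion is immaterial. The one inaccuracy is your claim that $\PSL(\O_K\oplus A)$ and $\PSL_2(\O_K)$ are conjugate in $\GL_2^+(\R)^d$ by diagonalizing a ``generator'' of $A$ (a non-principal $A$ has none, and the groups need not be conjugate), but this is inessential: commensurability and equality of covolumes, which is all Proposition~\ref{prop:asymptotic-dim-hmf} requires, suffice.
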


\begin{proof} The statement is the same as that of
  \cite[Theorem 1]{tsuyumine} except that we do not restrict to the principal
  genus and (for simplicity) do not allow the quotient by a nontrivial subgroup
  of $\Aut K$.  The proof is a straightforward adaptation of Tsuyumine's to this
  slightly more general situation.

  In our notation Tsuyumine defines 
  $\theta(I,q) = \{\nu \in I \D_K^{-1}: \nu \gg 0, \tr(\nu \beta \le q) \textup{ for some } \beta \gg 0 \in I^{-1}$ (cf.~Definition \ref{def:knoller-notation}),
  according to which this would be $\Lambda_{q+1}(I^{-1})$) and
  $u(I,q) = \#(\theta(I,q)/{\O_K^\times}^2)$.  
  Then \cite[Lemma 4]{tsuyumine} (again in our notation, and simplifying by
  ignoring the possibility of a subgroup of $\Sy_n$ acting on the coordinates)
  shows that
  $\dim S_{2k}^{(m)}(\Gamma) \ge \dim S_{2k}(\Gamma) - \sum_{\fra_\lambda} u(\fra_\lambda^2,\frac12k + m - 1)$.  Here $\fra_\lambda$ runs over a set of ideals corresponding
  to the cusps and $S^{(m)}$ denotes a certain space of cusp forms \cite[p.~271]{tsuyumine} such that if $S^{(1)} \ne 0$ and $K$ is not one of the exceptional
  fields in the statement of the theorem then the Hilbert modular variety is of
  general type.

  Let $g$ be the number of components of the Hilbert modular variety.
  Asymptotically the dimension of the space of weight-$2k$ cusp forms on each
  component is equal by Proposition~\ref{prop:asymptotic-dim-hmf},
  so on each component we obtain
  \begin{equation*}
    2^{-2+1}\,\pi^{-2d}\,D_K^{3/2}\,\zeta_K(2)\,(2k)^d + O(k^{d-1})
  \end{equation*}
  (\cite[p.~274]{tsuyumine}), while the inequality
  \cite[Lemma 5]{tsuyumine} $u(I,q) \le (2^{d-1}\,d^{-d}\,D_K^{1/2}\,|R|) q^d + O(q^{d-1})$
  is independent of $I$.  Now apply \cite[Lemma 4]{tsuyumine} with
  $\fra_\lambda$ running only over ideals representing the cusps of one
  particular component.  As in \cite[(9)]{tsuyumine} we find that
  for each component
  \begin{equation*}
    \dim S_{2k}^{(m)} \Gamma \ge \left(2^{-2d+1}\pi^{-2d}D_K^{3/2}\zeta_K(2) - 2^{-1}d^{-d}D_K^{1/2} h R\right) k^d + O(k^{d-1}),
  \end{equation*}
  whence as in \cite[Theorem 1]{tsuyumine}
  if $2^{-2d+2}\pi^{-2d}d^d \frac{D_K \zeta_K(2)}{h R} > 1$ then the
  component is of general type.  In this context Tsuyumine's $\hat h$ is
  equal to our $h$ since there is no action by a nontrivial subgroup of
  $\Sy_n$.
\end{proof}

\begin{cor}\label{grundman-all-genera}
  Let $K$ be a totally real cubic field of discriminant greater than
  $2.77 \cdot 10^8$ and $A$ an arbitrary genus of $K$.  Then $H_{K;A}$ is of
  general type.
\end{cor}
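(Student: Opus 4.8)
The plan is to deduce the corollary directly from Theorem~\ref{thm:tsuyumine-all-components} by specializing to $d=3$ and then bounding $\frac{D_K\zeta_K(2)}{hR}$ from below by elementary analytic estimates. Setting $d=3$, the hypothesis of the theorem rearranges to
\begin{equation*}
  \frac{D_K\zeta_K(2)}{hR} > 2^{2d-2}\pi^{2d}d^{-d} = \frac{16\pi^6}{27}.
\end{equation*}
The two exceptional cubic fields permitted by the theorem, namely $\Q(\zeta_7)^+$ and $\Q(\zeta_9)^+$, have discriminants $49$ and $81$, far below $2.77\cdot 10^8$, so they never arise under our hypothesis; the condition involving $\sqrt 3$ applies only when $d=4$ and is vacuous here. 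Hence it suffices to verify the displayed inequality for every totally real cubic field with $D_K > 2.77\cdot 10^8$.

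First I would replace the numerator factors by trivial lower bounds that isolate the growth in $D_K$. Since $\zeta_K(2)=\prod_{\p}(1-N\p^{-2})^{-1}>1$, it is enough to control $\frac{D_K}{hR}$. For a totally real cubic field one has $r_1=3$, $r_2=0$, and $w_K=2$, so the analytic class number formula gives that the residue $\kappa_K$ of $\zeta_K(s)$ at $s=1$ equals $\frac{4hR}{\sqrt{D_K}}$. Consequently
\begin{equation*}
  \frac{D_K\zeta_K(2)}{hR} > \frac{D_K}{hR} = \frac{4\sqrt{D_K}}{\kappa_K},
\end{equation*}
which converts the problem into an upper bound on the residue $\kappa_K$.

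The remaining ingredient is an effective upper bound on $\kappa_K$. Here I would invoke a Louboutin-type estimate, of the shape $\kappa_K \le \bigl(\tfrac{e}{4}\log D_K\bigr)^2$ (more precisely, whichever sharpened residue bound for totally real cubic fields is needed to produce the exact threshold), yielding
\begin{equation*}
  \frac{D_K\zeta_K(2)}{hR} \ge \frac{4\sqrt{D_K}}{\kappa_K} \ge \frac{c\,\sqrt{D_K}}{(\log D_K)^2}
\end{equation*}
for an explicit constant $c>0$. Because $\sqrt{D_K}$ outgrows any fixed power of $\log D_K$, the right-hand side eventually exceeds $\frac{16\pi^6}{27}$, and a single-variable analysis of the function $D\mapsto \sqrt{D}/(\log D)^2$ locates the crossover; the claim is that it occurs once $D_K>2.77\cdot 10^8$. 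Applying Theorem~\ref{thm:tsuyumine-all-components} then shows that every component $H_{K;A}$ is of general type.

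The main obstacle is the analytic input in the last step: one needs a residue bound (equivalently, an upper bound on $hR$ in terms of $\sqrt{D_K}$ and $\log D_K$) whose constants are sharp enough to deliver the precise numerical threshold $2.77\cdot 10^8$, rather than a merely qualitative ``for $D_K$ large'' statement. Once such a bound is fixed, verifying the crossover and confirming that the theorem's exceptional fields all lie below the threshold is entirely routine.
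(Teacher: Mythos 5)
Your reduction is the same as the paper's: specialize Theorem~\ref{thm:tsuyumine-all-components} to $d=3$, note that the exceptional fields $\qz7$, $\qz9$ have discriminants $49$ and $81$ and so are irrelevant, and then verify the inequality $\frac{D_K\zeta_K(2)}{hR}>\frac{16\pi^6}{27}$ for all totally real cubic $K$ with $D_K>2.77\cdot 10^8$. The paper, however, does not reprove that last verification at all: its entire proof is the observation that the numerical calculation is already carried out in Grundman's \cite[Theorem 2]{gr-cl} for the principal genus, and that Theorem~\ref{thm:tsuyumine-all-components} is exactly what is needed to transfer that calculation verbatim to every genus. The content of the corollary is the ``all genera'' upgrade, not the analytic estimate.

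Where you try to redo the analytic estimate, there is a genuine gap, and it sits precisely at the point you flag as ``routine.'' Your class-number-formula step is correct ($r_1=3$, $r_2=0$, $w=2$ give $\kappa_K=4hR/\sqrt{D_K}$, hence $\frac{D_K\zeta_K(2)}{hR}>\frac{4\sqrt{D_K}}{\kappa_K}$), but the Louboutin-type bound you pencil in, $\kappa_K\le\bigl(\tfrac{e}{4}\log D_K\bigr)^2$, does not deliver the stated threshold. At $D_K=2.77\cdot 10^8$ one has $\log D_K\approx 19.44$, so that bound gives $\kappa_K\le(13.21)^2\approx 175$, whereas the inequality $\frac{4\sqrt{D_K}}{\kappa_K}>\frac{16\pi^6}{27}\approx 569.7$ requires $\kappa_K<\frac{\sqrt{D_K}}{142.4}\approx 117$. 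With the bound as written the crossover happens only around $D_K\approx 7.5\cdot 10^8$, so the corollary as stated is not proved; and since the specific constant $2.77\cdot 10^8$ is the entire numerical content of the statement, deferring to ``whichever sharpened residue bound \dots is needed'' leaves the proof incomplete rather than routine. The clean fix is the one the paper uses: cite \cite[Theorem 2]{gr-cl} for the threshold and let Theorem~\ref{thm:tsuyumine-all-components} do the work of extending it from the principal genus to all genera.
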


\begin{proof} In view of Theorem~\ref{thm:tsuyumine-all-components},
  the calculation of \cite[Theorem 2]{gr-cl} applies equally to all genera.
\end{proof}

Now we would like to determine all $K$ for which \cite[p. 276]{tsuyumine}
does not immediately show this to be the case.

\begin{hyp}\label{hyp:automatic-except-421}
  For all but $421$ totally real cubic fields, the largest of their
  discriminants being $26601$, the Tsuyumine-Grundman criterion
  $\frac{\disc_K \zeta_K(2)}{hR} \ge \frac{16\pi^6}{27}$
  (\cite[Theorem 1]{tsuyumine}; \cite[Corollary 10]{gr-cl}) is satisfied.
\end{hyp}

In light of Grundman's observation that this holds for all $K$ with
$\disc_K > 2.77 \cdot 10^8$, this could be proved by a finite calculation
(though one should note that the tables of cubic fields in the LMFDB
\cite{lmfdb} are not complete this far out).  If cubic fields
are ordered by discriminant, the field of discriminant $26601$ is $1133$rd,
and we have checked that there are no further counterexamples among the first
$25000$ fields.

Let us assume Hypothesis \ref{hyp:automatic-except-421}.  Of the $421$ fields,
there are $47$ with $p_g(H_K) \le 1$, and these have been studied earlier in
this section.  Of the remaining $374$, there are $25$ with class number
greater than $1$, so we need a lemma to describe the cusps.

\begin{lem}\label{lem:cusp-types}
  Let $K$ be a totally real field and fix a genus
  $A \in \Cl^+(K)/2\Cl^+(K)$.  The $h(K)$ cusps of $H_{K;A}$ are of type
  $(I^2A,\otp)$ as $I$ runs over the ideal classes of $\O_K$.
\end{lem}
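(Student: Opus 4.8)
The plan is to realize each cusp as the image of $\infty$ under a suitable element of $\GL_2^+(K)$ and to read off the type $(M,V)$ from the stabilizer of $\infty$ in the conjugated group. As recalled after Notation~\ref{not:hmv}, the cusps of $H_{K;A}$ are the orbits of $\Gamma_0((1);A)=\PSL(\O_K\oplus A)$ on $\P^1(K)$; intersecting a line $K\cdot(\alpha,\beta)\subset K^2$ with the lattice $\O_K\oplus A$ identifies these orbits with the isomorphism classes of rank-$1$ direct summands of $\O_K\oplus A$, hence with $\Cl(K)$, giving $h(K)$ cusps. First I would fix the cusp at $(\alpha:\beta)$ with $\alpha,\beta\in K^\times$ and choose $g=\begin{psmallmatrix}\alpha&\gamma\\\beta&\delta\end{psmallmatrix}\in\GL_2^+(K)$ carrying $\infty$ to it; the totally positive determinant (one may even take $\det g=1$) is what will make the \emph{narrow} class come out correctly below.

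For the translation lattice, a direct computation shows that $g\begin{psmallmatrix}1&\mu\\0&1\end{psmallmatrix}g^{-1}$ lies in $\PGL^+(\O_K\oplus A)$ exactly when, writing $\nu=\mu/\det g$, one has $\alpha^2\nu\in A^{-1}$ and $\beta^2\nu\in A$; the integrality of the diagonal entries is then automatic, since $(\alpha\beta\nu)^2\in A^{-1}A=\O_K$ forces $\alpha\beta\nu\in\O_K$. Hence $M=\det g\cdot\bigl(\alpha^{-2}A^{-1}\cap\beta^{-2}A\bigr)=\det g\cdot\bigl(\alpha^2A+\beta^2A^{-1}\bigr)^{-1}$. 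The key simplification is the identity $\alpha^2A^2+\beta^2\O_K=(\alpha A+\beta\O_K)^2$, checked locally because at each prime the cross term $\alpha\beta A$ has valuation at least $2\min(v(\alpha A),v(\beta))$. Writing $\mathfrak{e}=\alpha A+\beta\O_K$ this gives $\alpha^2A+\beta^2A^{-1}=A^{-1}\mathfrak{e}^2$, so $M=\det g\cdot A\,\mathfrak{e}^{-2}$.

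It remains to match $M$ with the summand class. The summand at $(\alpha:\beta)$ is $\{(t\alpha,t\beta):t\in\alpha^{-1}\O_K\cap\beta^{-1}A\}$, and the intersection formula $P\cap Q=PQ(P+Q)^{-1}$ gives $\alpha^{-1}\O_K\cap\beta^{-1}A=A\,\mathfrak{e}^{-1}$, so its class is $[\mathfrak{b}]=[A\mathfrak{e}^{-1}]$; taking $\mathfrak{b}=A\mathfrak{e}^{-1}$ yields $M=\det g\cdot\mathfrak{b}^2A^{-1}$. Since $\det g\gg 0$, the narrow class of $M$ is $[\mathfrak{b}^2A^{-1}]$, and because the square of a principal ideal is narrowly principal this narrow class depends only on the ordinary class $[\mathfrak{b}]\in\Cl(K)$. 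Reindexing by the bijection $I=\mathfrak{b}A^{-1}$ of $\Cl(K)$ turns $\mathfrak{b}^2A^{-1}$ into $I^2A$, so as $I$ runs over $\Cl(K)$ the lattices $M$ run through the narrow classes of the $I^2A$. For the unit part, the semisimple elements $\begin{psmallmatrix}a&0\\0&d\end{psmallmatrix}$ of the $\infty$-stabilizer in $\PGL^+$ have multiplier $a/d$ ranging over all of $\otp$, and conjugation by $g$ leaves this group unchanged, giving $V=\otp$.

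The main obstacle is securing the \emph{narrow} class of $M$ rather than merely its isomorphism class: the $\O_K$-module structure of the stabilizer only sees $\Hom_{\O_K}(\mathfrak{c},\mathfrak{b})\cong\mathfrak{b}^2A^{-1}$ up to ordinary class, and it is the archimedean positivity encoded in $\det g\gg 0$, together with the ideal identity $\alpha^2A^2+\beta^2\O_K=(\alpha A+\beta\O_K)^2$, that pins it down. One further bookkeeping point is that $H_{K;A}$ is the $\PSL$-quotient, for which the cusp multiplier group is the index-$(h^+/h)$ subgroup $(\O_K^\times)^2\subseteq\otp$; recording the type with the larger group $\otp$ is harmless, since the passage to $(\O_K^\times)^2$ is exactly the index correction supplied by Lemma~\ref{lem:defect-of-subgroup}.
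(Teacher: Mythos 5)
Your argument is correct, but it is a genuinely different route from the paper's: the paper disposes of this lemma in one line by citing \cite[Proposition 3.3.8(a)]{hmf-mult}, whereas you rederive the cusp types from scratch by conjugating the unipotent stabilizer of $\infty$ back through $g$ and simplifying the resulting ideal $\det g\cdot(\alpha^{-2}A^{-1}\cap\beta^{-2}A)$ via the local identity $\alpha^2A^2+\beta^2\O_K=(\alpha A+\beta\O_K)^2$. I checked the computation: the conjugate of $\begin{psmallmatrix}1&\mu\\0&1\end{psmallmatrix}$ is $\begin{psmallmatrix}1-\alpha\beta\nu&\alpha^2\nu\\-\beta^2\nu&1+\alpha\beta\nu\end{psmallmatrix}$ with $\nu=\mu/\det g$, your observation that $(\alpha\beta\nu)^2\in\O_K$ forces $\alpha\beta\nu\in\O_K$ (integral closure) is right, and the identification $M=\det g\cdot\mathfrak b^2A^{-1}$ with $\mathfrak b=A\mathfrak e^{-1}$ the Steinitz class of the summand matches the statement after your reindexing $I=\mathfrak bA^{-1}$. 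What your approach buys is transparency about exactly where the \emph{narrow} class of $M$ comes from ($\det g\gg0$ plus the fact that squares of principal ideals are narrowly principal), which the citation hides; what it costs is that you must separately import the standard fact that $\PSL(\O_K\oplus A)$ already acts transitively on each isomorphism class of rank-one summands of $\P^1(K)$ (so that there are exactly $h(K)$ cusps) --- you assert this rather than prove it, though it is exactly the content of the cited \cite[\S3.1]{hmf-mult} and of \cite[I.1]{vdg} when $A=(1)$.

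Two small points of hygiene. First, your phrase ``conjugation by $g$ leaves this group unchanged'' for the unit part is slightly too quick: $g^{-1}\PGL^+(\O_K\oplus A)g$ is $\PGL^+$ of a \emph{different} lattice $\mathfrak b\oplus\mathfrak c$, so you should say that the same diagonal computation, applied to that lattice, again yields multiplier group $\otp$. Second, restricting to $\alpha,\beta\in K^\times$ silently omits $\infty$ and $0$; either note that every orbit has a representative with both coordinates nonzero, or observe that your formula degenerates correctly when $\beta=0$ (giving $\mathfrak e=\alpha A$, $M=\det g\cdot\alpha^{-2}A^{-1}$). Your closing remark about $\PSL$ versus $\PGL^+$ --- that the lemma's $V=\otp$ should really be $(\O_K^\times)^2$ for the $\PSL$-quotient, with index $h^+/h$ absorbed by Lemma~\ref{lem:defect-of-subgroup} --- is accurate and is consistent with how the paper itself uses the lemma in Example~\ref{ex:disc-1957}.
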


\begin{proof} Immediate from \cite[Proposition 3.3.8 (a)]{hmf-mult}.
\end{proof}

Now we need only apply the existing code to the cusps as described above
and compare their defect contribution to $-2\zeta_K(-1)$ to prove that
the Hilbert modular varieties are of general type.  For example, let us
consider the first cubic field of class number $2$, which does not
satisfy the Tsuyumine-Grundman criterion.

%%% make sure that I am multiplying by a correct normalization factor
\begin{ex}\label{ex:disc-1957}
  The field $K = \cf{1957}$ is $\Q(\alpha)$, where $\alpha$ is a root of
  $x^3 - x^2 - 9x + 10$.
  We have $h_K = 2$, but the narrow class group
  is cyclic of order $4$, so there are $2$ genera.  Let the narrow class
  group be represented by $I_0, I_1, I_2, I_3$, where $I_0, I_2$ are
  principal and $I_0$ is narrowly principal.
  We may choose the representatives to have norm $1, 2, 4, 4$ respectively
  (this specifies them uniquely).  For the principal
  genus and $\PGL_2^+$, the two cusps are of type
  $(I_0,\otp), (I_2,\otp)$, and for the
  nonprincipal genus, they are of type $(I_1,\otp), (I_3,\otp)$.
  If we used $\PSL_2$ instead of $\PGL_2^+$, the groups of units would be
  ${\O^\times_K}^2$ instead of $\otp$.

  We calculate that $-2\zeta_K(-1) = 104/3$, and that the volume of the
  unions of polyhedra for cusps of type $(I_j,{\O^\times_K}^2)$ scaled by the
  covolume of ${\hat I}_j$ are respectively
  $34/3, 71/18, 44/15, 103/18$.  Since $104/3 > 34/3 + 44/15, 71/18+103/18$,
  this shows that both $H_{K;A}$ are of general type.
\end{ex}

\begin{hyp}\label{hyp:all-general-type}
  For all $K$ with $p_g(H_K) > 1$ and all genus representatives $A$,
  the Hilbert modular threefold $H_{K;A}$ is of general type.
\end{hyp}

\begin{myremark}\label{rem:pg-gt-1-not-gen-type-quad}
  The analogous statement is false for quadratic fields;
  in the honestly elliptic case of \cite[Theorem VII.3.3]{vdg} there are
  many counterexamples.
\end{myremark}

In \cite{code} there is a script that is expected to verify
that Hypothesis~\ref{hyp:automatic-except-421} implies
Hypothesis~\ref{hyp:all-general-type}
if run long enough, as well as code that will check that there are no
counterexamples to Hypothesis~\ref{hyp:automatic-except-421}
among cubic fields in the LMFDB.

\section{Results at higher level}\label{sec:results-higher-level}
In this section we describe our results on the Kodaira dimension
of Hilbert modular threefolds of the form $H_{K,I;A}$.  As before,
we can try to prove that such a surface is of general type by showing that
the dimension of the space of modular forms grows faster than the defects,
and if this fails we can still hope to prove that $\kappa_{H_{K,I;A}} > 0$ by
finding $q$ such that $\dim \left| qK \right| \ge 2$.  If $p_g(H_K) = 1$ then
we have already shown that $H_K$ is of general type, so the same follows for
its covers and so the most interesting cases are those with $p_g(H_K) = 0$.

We now begin to list the pairs $K, I$ systematically for which the geometric
genus of $H_{K,I}$ is at most $1$.  First note that if $J|I$ and
$p_g(H_{K,J}) > 0$, then by the theory of oldforms we have
$p_g(H_{K,I}) > 2p_g(H_{K,J}) \ge 2$, so such cases can be ignored.
%% (However, some should still be considered when the narrow class number is
%% greater than $1$, because there would still be interesting components with
%% $p_g \le 1$.)
We will use the trace formula \cite[(5.1.2)]{hmf-mult}
to bound the levels for which the geometric genus is at most $1$.
Let us begin by stating the formula, first introducing the notation of
\cite[(5.1.3), (5.2.2)]{hmf-mult}.
\begin{notation}\label{not:trace-formula}
  Let $K$ be a totally real field of degree $d$ and let $I$ be an ideal
  of its maximal order.  Let $\Sy$ be the
  set of orders containing one of the $\O_K[x]/(x^2-tx+u)$ where
  $u$ ranges over totally positive units modulo squares and $t$ over
  elements of $\O_K$ such that $t^2 - 4u \gg 0$.
  For each order
  $S \in \Sy$, let $c_S = h(S)/2[S^\times:\O_K^\times]$ as in
  \cite[(5.1.3)]{hmf-mult}.  Let $A = (-1)^{d-1} h^+(\O_K)$, let
  $B = \frac{1}{2^{d-1}} \cdot \left|\zeta_K(-1)\right| \cdot N(I) \prod_{\p|I} \left(1+N(\p)^{-1}\right)$,
  let $C(u,t) = \frac12 \sum_{S \in \Sy} \frac{h(S)}{[S^\times:\O_K^\times]} m(\hat S,\hat \O_K; \hat \O_K^\times)$,
  where the $m$ are certain embedding numbers that are products over primes
  dividing the level, defined in \cite[Section 30.6]{voight},
  and let $D_k(u,t) = \frac{\alpha(u,t)^{k+1}-\beta(u,t)^{k+1}}{\alpha(u,t)-\beta(u,t)}$,
  where $\alpha(u,t), \beta(u,t)$ are the roots of $T^2 - tT + u$.
\end{notation}

\begin{thm}\cite[(5.1.2)]{hmf-mult} \label{thm:dim-formula}
  The dimension of the space of Hilbert cusp forms for $\Gamma_0(I)$ of
  weight $2k$ is the coefficient of $T^{2k}$ in
  \begin{equation*}
    AT^2 + BT\left(T \frac{d}{dT}\right)^n \left(\frac{T}{1-T^2}\right) +
  (-1)^n \sum_{(u,t)} C(u,t)\sum_{m \ge 1} N_{K/\Q}(D_{2m-2}(u,t)) T^{2m}.
  \end{equation*}
\end{thm}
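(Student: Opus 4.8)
The plan is to derive this as a specialization of the Eichler--Selberg trace formula for Hilbert modular forms, in the adelic form given in \cite[39.10]{voight} and reorganized in \cite[5.1]{hmf-mult}, applied to the trivial Hecke operator $T_{(1)}$. Since $\tr\left(T_{(1)} \mid S_{2k}(\Gamma_0(I))\right) = \dim S_{2k}(\Gamma_0(I))$, it suffices to evaluate the geometric side of the trace formula for the identity and to repackage the three families of geometric terms---central (identity), elliptic, and parabolic---as the coefficient of $T^{2k}$ in a single generating function. Throughout, $n = d = [K:\Q]$, and the hyperbolic contributions are absent for the identity operator.

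First I would treat the central term, which produces the main term $B$. By Siegel's volume computation (Theorem~\ref{thm:fd-volume}) the covolume of $\PSL_2(\O_K)$ is $2\zeta_K(-1)$, and the level enters through the index $[\PSL_2(\O_K):\Gamma_0(I)] = N(I)\prod_{\p \mid I}\left(1 + N(\p)^{-1}\right)$. At each of the $d$ real places the weight-$2k$ holomorphic discrete series contributes the factor $2k-1 = \dim \mathrm{Sym}^{2k-2}$, so the identity term is a constant multiple of $(2k-1)^d$. To see that this is precisely the $T^{2k}$-coefficient of $B\,T\left(T\frac{d}{dT}\right)^{n}\left(\frac{T}{1-T^{2}}\right)$, note that $\left(T\frac{d}{dT}\right)^{n} T^{j} = j^{n} T^{j}$ and $\frac{T}{1-T^{2}} = \sum_{j \ge 0} T^{2j+1}$, so multiplying by $T$ and extracting $[T^{2k}]$ yields $B(2k-1)^{n}$, matching once $B = \frac{1}{2^{d-1}}\left|\zeta_K(-1)\right| N(I)\prod_{\p\mid I}\left(1 + N(\p)^{-1}\right)$ as in Notation~\ref{not:trace-formula}.

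Next I would handle the elliptic term. Elliptic conjugacy classes in $\GL_2^+(K)$ correspond to elements with irreducible characteristic polynomial $T^2 - tT + u$, $u$ a totally positive unit, defining an elliptic fixed point in $\HH^d$; up to the relevant equivalence these are parametrized by the orders $S = \O_K[x]/(x^2 - tx + u) \in \Sy$ together with their ideal classes, and the number of fixed points surviving at level $I$ is counted by the weighted class number $c_S = h(S)/2[S^\times:\O_K^\times]$ and the local optimal-embedding numbers $m(\hat S, \hat \O_K; \hat \O_K^\times)$ of \cite[Section 30.6]{voight}, which assemble into $C(u,t)$. Evaluating the weight-$2k$ discrete-series character on such an element at the $i$th real place gives the Chebyshev-type factor $\frac{\alpha_i^{2k-1} - \beta_i^{2k-1}}{\alpha_i - \beta_i}$, so taking the product over the $d$ places is exactly $N_{K/\Q}(D_{2k-2}(u,t))$; summing the generating series $\sum_{m \ge 1} N_{K/\Q}(D_{2m-2}(u,t))\,T^{2m}$ against $C(u,t)$ with the global sign $(-1)^{n}$ reproduces the third summand.

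Finally, the parabolic (cusp) contribution is supported entirely in weight $2$ and, after the truncation that regularizes the divergent parabolic integral, equals up to sign the number of cusps, namely the narrow class number $h^+(\O_K)$; this gives the term $AT^2$ with $A = (-1)^{d-1} h^+(\O_K)$. Adding the three generating functions and reading off $[T^{2k}]$ yields the claimed formula. The main obstacle I anticipate is the precise bookkeeping of the elliptic term: matching the adelic optimal-embedding numbers and weighted class numbers to the classical count of elliptic fixed points with their isotropy, and pinning down the normalizations---the factor $2^{d-1}$ in $B$, the $2$ in $c_S$, and the sign $(-1)^{n}$---so that everything assembles cleanly. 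A secondary difficulty is the identification of the parabolic term, whose naive contribution diverges and must be regularized carefully to produce exactly the cusp count in weight $2$ and nothing in higher weight.
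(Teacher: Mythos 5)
This statement is not proved in the paper at all: it is quoted verbatim from \cite[(5.1.2)]{hmf-mult} (with a sign typo corrected, as noted in Remark~\ref{rem:hmf-mult-error}), and the paper explicitly says that the general formula for $\Gamma_0(I)$ ``can be extracted from \cite[39.10]{voight} and \cite[5.1]{hmf-mult}.'' So there is no in-paper proof to compare against; what you have written is a reconstruction of the derivation in the cited sources. Your overall route --- evaluate the trace of the trivial Hecke operator via the Eichler--Selberg/Arthur--Selberg trace formula, with the identity, elliptic, and parabolic conjugacy classes producing the $B$, $C$, and $A$ terms respectively, and no hyperbolic contribution --- is indeed the standard one and is how the cited formula is obtained. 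Your verification that $[T^{2k}]\,BT\left(T\frac{d}{dT}\right)^{n}\left(\frac{T}{1-T^{2}}\right) = B(2k-1)^{n}$ and that $D_{2k-2}(u,t)$ is the weight-$2k$ discrete-series character value $\frac{\alpha^{2k-1}-\beta^{2k-1}}{\alpha-\beta}$ are both correct and are the right sanity checks.

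That said, as a proof the proposal has real gaps, and they are exactly the ones you flag yourself: the matching of the adelic optimal-embedding numbers $m(\hat S,\hat\O_K;\hat\O_K^{\times})$ and the weighted class numbers $c_S$ to the classical elliptic fixed-point count, and the normalizations $2^{d-1}$, the $\tfrac12$ in $c_S$, and the sign $(-1)^{n}$, are asserted rather than established; this bookkeeping is the entire content of \cite[Section 30.6 and 39.10]{voight}. One concrete error: you identify the weight-$2$ correction term $A=(-1)^{d-1}h^{+}(\O_K)$ with ``the number of cusps'' of $\Gamma_0(I)$. That cannot be right, since the number of cusps of $\Gamma_0(I)$ grows with the level (compare the cusp counts in the proof of Proposition~\ref{cor:mostly-gt-not-prime}), whereas $A$ is independent of $I$. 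The term $AT^{2}$ is the level-independent Eisenstein/parabolic correction special to weight $2$ --- the Hilbert analogue of the $+1$ in $\operatorname{tr} T_1$ on $S_2(\Gamma_0(N))$ --- with $h^{+}(\O_K)$ counting the components of the adelic Hilbert modular variety, not the cusps of a single congruence subgroup. If you intend to carry this out in full, that identification and the regularization producing it need to be redone.
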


\begin{myremark}\label{rem:hmf-mult-error}
  We have corrected a typographical error in \cite[(5.1.2)]{hmf-mult},
  where the sign before the term beginning with $\sum_{(u,t)} C(u,t)$ is
  omitted.
\end{myremark}

\begin{thm}\label{thm:prime-bound}
  Let $K$ be a totally real cubic field satisfying $p_g(H_K) = 0$
  and let $\p$ be a prime ideal of $\O_K$.      
  If $p_g(H_{K,\p}) \le 1$ then either $\p$ divides the conductor of
  some $S \in \Sy$ or
  $h^+(\O_K) + (N(\p)+1)\left|\zeta_K(-1)\right|/4 - 2\sum_{S \in \Sy} c_S \le 1$.
\end{thm}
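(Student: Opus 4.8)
The plan is to read $p_g(H_{K,\p})$ directly off the dimension formula of Theorem~\ref{thm:dim-formula} at weight $2$, and then to bound the elliptic contribution using local embedding numbers. Since $d = [K:\Q] = 3$, Theorem~\ref{thm:can-div-hmv} identifies $p_g(H_{K,\p})$ with the dimension of the space of Hilbert cusp forms of weight $2$ for $\Gamma_0(\p)$, i.e.\ with the coefficient of $T^2$ in the generating series of Theorem~\ref{thm:dim-formula} taken with $n = 3$ and $I = \p$. So I would extract that coefficient term by term.

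The first term $AT^2$ contributes $A = (-1)^{d-1}h^+(\O_K) = h^+(\O_K)$. For the second, expanding $\frac{T}{1-T^2} = \sum_{j\ge 0} T^{2j+1}$ and using that $T\frac{d}{dT}$ scales the coefficient of $T^m$ by $m$, one gets $\left(T\frac{d}{dT}\right)^3\!\left(\frac{T}{1-T^2}\right) = \sum_{j\ge 0}(2j+1)^3 T^{2j+1}$; after multiplication by $BT$ the coefficient of $T^2$ is the $j=0$ contribution, namely $B = \tfrac14|\zeta_K(-1)|\,N(\p)\,(1+N(\p)^{-1}) = \tfrac14|\zeta_K(-1)|\,(N(\p)+1)$. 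In the elliptic term the coefficient of $T^2$ selects $m = 1$, and since $D_0(u,t) = \frac{\alpha(u,t)-\beta(u,t)}{\alpha(u,t)-\beta(u,t)} = 1$ we have $N_{K/\Q}(D_0(u,t)) = 1$; with $(-1)^n = -1$ this gives $-\sum_{(u,t)} C_\p(u,t)$, where $C_\p$ is formed from the embedding numbers of the Eichler order of level $\p$. Hence
\begin{equation*}
  p_g(H_{K,\p}) = h^+(\O_K) + \tfrac14|\zeta_K(-1)|(N(\p)+1) - \sum_{(u,t)} C_\p(u,t).
\end{equation*}

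The crux is to bound $\sum_{(u,t)} C_\p(u,t)$. The embedding number $m(\hat S,\hat\O_K;\hat\O_K^\times)$ is a product of local factors, and passing from level $1$ to level $\p$ changes only the factor at $\p$. At the maximal order this local factor equals $1$, whereas for the Eichler order of level $\p$, provided $\p \nmid \mathfrak{f}(S)$, the standard computation \cite[\S 30.6]{voight} gives $1 + \left(\frac{S}{\p}\right)$, equal to $2$, $0$, or $1$ according as $\p$ splits, is inert, or ramifies in $\operatorname{Frac}(S)$. In every case it is at most $2$. Since at level $1$ all local embedding numbers are $1$ and $D_0 = 1$, the level-$1$ elliptic term equals the single sum $\sum_{S\in\Sy} c_S$ recorded in the statement; comparing factor by factor gives $C_\p(u,t) \le 2\,C_1(u,t)$ for each $(u,t)$, hence $\sum_{(u,t)} C_\p(u,t) \le 2\sum_{S\in\Sy} c_S$. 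Substituting,
\begin{equation*}
  p_g(H_{K,\p}) \ge h^+(\O_K) + \tfrac14|\zeta_K(-1)|(N(\p)+1) - 2\sum_{S\in\Sy} c_S,
\end{equation*}
so $p_g(H_{K,\p}) \le 1$ forces the asserted inequality. The hypothesis $p_g(H_K) = 0$ fixes the ambient setting and, through the level-$1$ case of the same formula, records $\sum_{S\in\Sy} c_S = h^+(\O_K) + \tfrac14|\zeta_K(-1)|$.

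I expect the embedding-number bookkeeping to be the main obstacle. One must check that only the $\p$-factor changes between the two levels, invoke the correct local embedding number for an Eichler order of prime level, and confirm that the double sum $\sum_{(u,t)} C_1(u,t)$ collapses to the single sum $\sum_{S\in\Sy} c_S$ that appears in the statement. The restriction $\p \nmid \mathfrak{f}(S)$ is precisely what yields the clean bound $1 + \left(\frac{S}{\p}\right) \le 2$; if $\p$ divides some conductor the local factor can exceed $2$, which is exactly the exceptional alternative allowed in the theorem.
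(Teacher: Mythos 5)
Your proposal is correct and follows essentially the same route as the paper: extract the coefficient of $T^2$ from the trace formula of Theorem~\ref{thm:dim-formula}, note that $D_0 = 1$, and bound the local embedding number at $\p$ by $2$ (via \cite[\S 30.6]{voight}) when $\p$ does not divide the conductor, which is exactly the exceptional alternative in the statement. The paper's proof is just a terser version of the same computation.
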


\begin{myremark} Of course this implies an effective upper bound for $N(\p)$.
\end{myremark}

\begin{proof} This is a consequence of \cite[(5.1.2),(5.1.3)]{hmf-mult}.
  Indeed, the geometric genus is the dimension of the space of weight-$2$
  cusp forms by Theorem~\ref{thm:can-div-hmv}.  We estimate the constants
  $A, B, C, D$ in this case.  The coefficient of $T^2$ in
  $T\left(T\frac{d}{dT}\right)^n \left(\frac{T}{1-T^2}\right)$ is always
  $1$, and $(N(\p)+1)\left|\zeta_K(-1)\right|/4$ is the value of $B$ in
  this context (recall that $p_g = 0$ implies $h_K = 1$ for cubic fields).
  The coefficient $D_0$ is always $1$ so that factor may be ignored in
  computing the dimension of the space of weight-$2$ cusp forms, while
  \cite[Lemma 30.6.17]{voight} implies that the local embedding number
  $m(\hat S,\hat \O; \hat \O^\times)$ is at most $2$ when the level is an
  unramified prime.  The result follows.
\end{proof}

We now present the calculation of levels at which the geometric genus is
$0$ or $1$ in detail for the smallest cubic field.
\begin{thm}\label{thm:pg-01-disc-49}
  Let $K = \qz7$.  The Hilbert modular variety $H_{K,I}$ is
  of geometric genus $0$ if and only if
  $I \in \{(1), \p_7, (2), \p_{13}, \p_{29}, \p_{43}\}$, and $1$ if and only if
  $I \in \{(3), \p_{41}, \p_7^2, 2\p_7, (4), \p_{71}, \p_{7}\p_{13}, \p_{97}, \p_{113}, \p_{127}, \p_{13}^2\}$.
\end{thm}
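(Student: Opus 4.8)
The plan is to identify $p_g(H_{K,I})$ with the dimension of the space of weight-$2$ Hilbert cusp forms for $\Gamma_0(I)$, which is legitimate by Theorem~\ref{thm:can-div-hmv}, and then to evaluate this dimension by specializing the trace formula of Theorem~\ref{thm:dim-formula} to weight $2$, i.e. by extracting the coefficient of $T^2$. In the notation of Notation~\ref{not:trace-formula} that coefficient is $A + B$ (the coefficient of $T^2$ in $T\left(T\frac{d}{dT}\right)^n\!\left(\frac{T}{1-T^2}\right)$ being $1$), corrected by the weight-$2$ part of the elliptic sum, which uses only $D_0 = 1$. For $K = \qz7$ we have $h = h^+ = 1$ and $\zeta_K(-1) = -1/21$, so $A = 1$ and $B = \frac{N(I)}{84}\prod_{\p \mid I}(1 + N(\p)^{-1})$. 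Since $B$ grows linearly in $N(I)$ while the elliptic correction stays bounded, there is an effective constant $N_0$ with $p_g(H_{K,I}) \ge 2$ whenever $N(I) > N_0$, reducing the problem to a finite check.

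First I would bound the prime levels directly. Theorem~\ref{thm:prime-bound}, with $h^+ = 1$ and $|\zeta_K(-1)| = 1/21$, shows that a prime $\p$ with $p_g(H_{K,\p}) \le 1$ either divides the conductor of one of the relevant orders $S \in \Sy$ --- the only such prime being the ramified $\p_7$, reflecting the order-$7$ elliptic points of $\qz7$ (Example~\ref{ex:cq-7-9}) --- or satisfies $(N(\p)+1)/84 \le 2\sum_{S} c_S$, an explicit inequality bounding $N(\p)$. I would then enumerate the finitely many primes below this bound and compute the weight-$2$ dimension for each from Theorem~\ref{thm:dim-formula}, reading off the genus-$0$ primes $\p_7, (2), \p_{13}, \p_{29}, \p_{43}$ and the genus-$1$ primes $(3), \p_{41}, \p_{71}, \p_{97}, \p_{113}, \p_{127}$.

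For composite and prime-power levels I would exploit two monotonicity facts. Since the degeneracy maps embed forms of level $J$ into level $I$ whenever $J \mid I$, the genus is nondecreasing in the level, so every genus-$0$ level has only genus-$0$ proper divisors; and by the oldform bound stated earlier in this section, if some proper divisor $J \mid I$ already has $p_g(H_{K,J}) \ge 1$ then $p_g(H_{K,I}) > 2$. Hence the genus-$1$ levels are precisely the minimal ideals lying above the genus-$0$ ones. I would therefore build candidate levels upward from $(1)$ by multiplying the genus-$0$ primes together with $\p_7$, discarding any $I$ with $N(I) > N_0$ or with a proper divisor of positive genus, and evaluate the weight-$2$ dimension on the survivors. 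This yields the remaining genus-$0$ level $(1)$ together with the genus-$1$ levels $\p_7^2, 2\p_7, (4), \p_7\p_{13}, \p_{13}^2$, and certifies that no further product remains below $N_0$.

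The main obstacle is the accurate evaluation of the elliptic/torsion terms $C(u,t)$ of Notation~\ref{not:trace-formula}: these require enumerating the orders $S \in \Sy$ and computing the local embedding numbers $m(\hat S, \hat\O_K; \hat\O_K^\times)$ of \cite[Section 30.6]{voight} at each prime dividing the level, and the ramified prime $\p_7$ (carrying the exceptional elliptic points of $\qz7$) and the inert primes $(2), (3)$ must be handled with care. I would carry out these computations with the Magma implementation of \cite{hmf-mult, code}, using the oldform relation $p_g(H_{K,\p J}) \ge 2\,p_g(H_{K,J})$ as an internal consistency check, and relying on the effective bound $N_0$ to guarantee that the enumeration is complete.
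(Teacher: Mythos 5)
Your proposal is correct and follows essentially the same route as the paper: specialize the trace formula to weight $2$, use Theorem~\ref{thm:prime-bound} to bound the prime levels, and handle composite levels via the oldform inequality $p_g(H_{K,I}) > 2p_g(H_{K,J})$ for $J \mid I$ together with a finite enumeration. Two small inaccuracies worth fixing: the primes dividing conductors of orders in $\Sy$ are $\p_7$, $(2)$, and $(3)$ (not just $\p_7$), and the elliptic correction is not uniformly bounded over all $I$ (the local embedding numbers multiply over prime divisors of the level), so the existence of $N_0$ for composite levels needs the slightly more careful case analysis the paper carries out for products of two primes and for $\p^2$.
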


\begin{proof} We apply Theorem~\ref{thm:prime-bound}.
  We have $h^+(\O_K) = 1$, while
  $\zeta_K(-1) = -1/21$ and
  $\sum_{S \in \Sy} c_s \le 1/2(1/2 + 2(1/3) + 6(1/7)) = 85/84$, where each term
  $1/2, 2(1/3), 6(1/7)$ appears in the sum if and only if $\p$ splits in
  $K(i), K(\zeta_3), K(\zeta_7)$ respectively (we assume that $\p$ is not one
  of the primes $\p_7, (2), (3)$ dividing one of the conductors).
  It follows that if
  $N(\p) > 169$ then $p_g(H_{K,\p}) > 1$, and one checks cases up to there.

  Next we allow the level to be the product of two primes.  If the primes
  are distinct then the local embedding number is
  $\prod_{i=1}^2 1+\left(\frac{N(\p_i)}{d}\right)$, and so the contribution cannot
  exceed $2(85/42)$.  So again if $N(\p_1 \p_2) > 340$ we have $p_g > 1$,
  and it is not difficult to check up to this point.  At level $\p^2$ the
  local embedding numbers are the same as for $p$ except for primes above
  $2, 3, 7$, and again it is straightforward to verify that $p_g > 1$ for
  primes of norm $27$ or greater.

  Finally, if the level is a product of three or more primes, then
  $p_g$ can be less than $2$ only if $p_g = 0$ for all
  proper divisors.  However, it has already been verified that if
  $p_g = 0$ then $I$ is prime, so this is not possible.
\end{proof}  

We now study the Kodaira dimension of these covers.

\begin{defn}\label{def:ell-defect}
  Let $\p$ be a prime of $\O_K$, where $K$ is a totally real cubic field.
  If $K = \qz7$, then define $c_\p$ to be $2, 1, 0$ according as
  $N(\p) \equiv 1, 0, -1 \bmod 7$.  If $K = \qz9$, then let
  $c_\p = 2, 1, 0$ according as $N(\p) \equiv 1, 0, -1 \bmod 3$.
  For a general ideal $I$ of $\qz7$ or $\qz9$, let $e_I = 0$ if $I$ is
  divisible by the square of the ramified prime and otherwise
  $\frac{\prod_{\p|I} c_\p}{c_K}$, where $c_K = 84$ for $\qz7$ and $18$ for
  $\qz9$.  For all other fields define $c_\p = e_I = 0$.
\end{defn}

\begin{thm}\label{thm:kod-dim-prime-level}
  Let $K$ be a totally real cubic field with $h(K) = 1$, let
  $z = -2\zeta_K(-1)$, let $A$ be a genus representative for $K$,
  and let $v$ be the volume of the cusp for $A$
  as computed in Algorithm~\ref{alg:volume}.
  Let $\p$ be a prime ideal of $\O_K$ such that
  $(N(\p)+1) z > 2v + e_\p$.  Then $H_{K,\p;A}$ is of general type.
\end{thm}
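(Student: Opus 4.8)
The plan is to apply the general-type criterion behind Proposition~\ref{prop:general-type} to the finite-index subgroup $\Gamma_0(\p;A)$ in place of $\PSL_2(\O_K)$. By the lower bound~(\ref{eqn:lower-bound}) it suffices to show that the leading coefficient of the degree-$3$ quasi-polynomial $\dim M_{2q}$ strictly exceeds that of $\sum_{P\in S}\delta_P(q)$, where $S$ is the set of cusps and elliptic points of $H_{K,\p;A}$; this forces $\hhh^0(K^{\otimes q})$ to grow like a positive multiple of $q^3$, which is exactly general type. So I would compute the two leading coefficients and match them against $(N(\p)+1)z$ and $2v+e_\p$ respectively.

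For the dimension side I would use that $[\PSL_2(\O_K):\Gamma_0(\p)]=\#\P^1(\O_K/\p)=N(\p)+1$ for prime $\p$, so that Proposition~\ref{prop:asymptotic-dim-hmf} gives $\dim M_{2q}\sim(N(\p)+1)\,2\zeta_K(-1)(-q)^3=(N(\p)+1)z\,q^3$; the Eisenstein part has dimension bounded independently of $q$ and hence does not affect the leading term. Exactly as in the proof of Theorem~\ref{thm:general-type-from-473}, replacing $\PGL_2^+$ by $\PSL_2$ scales both $\dim M_{2q}$ and every defect by the common index $[\otp:{\O_K^\times}^2]$, so the comparison is insensitive to this choice and I may work with $V=\otp$.

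The cusp term is the main bookkeeping step. For prime $\p$ and $h(K)=1$ there are exactly two cusps, represented by $\infty=[1:0]$ and $0=[0:1]$, and I would read off their types $(M,V)$ in the sense of Definition~\ref{def:knoller-notation}. The cusp at $\infty$ retains the level-$1$ translation module (narrow class $[A]$) together with the full unit group $\otp$, so its defect is $\sim v\,q^3$. The cusp at $0$ has width $\p$, hence module in the narrow class $[\p][A]$; since $h(K)=1$ makes $\p$ principal — and narrowly principal once $h^+(K)=1$ — Corollary~\ref{cor:change-m-v} shows its defect is again $\sim v\,q^3$, for a cusp total of $2v\,q^3$. (When $h^+=2$ one takes $v$ to be the principal-genus cusp volume, which dominates $c_{A,\otp}+c_{\p A,\otp}$; and whenever $e_\p\neq0$ one has $h^+=1$, so the count $2v$ is then exact.)

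It remains to handle the elliptic points. By Remark~\ref{rem:cqs-2-3-canonical} these contribute nothing unless $K\cong\qz7$ or $\qz9$, matching $e_\p=0$ in all other cases. For these two fields I would count the order-$7$ (resp.\ order-$9$) elliptic points of $\Gamma_0(\p;A)$ lying above the single nonvanishing-defect point of $H_K$ from Example~\ref{ex:cq-7-9}: each retains its local cyclic-quotient type, and hence its per-point asymptotic, while the number of lifts is $c_\p$ times the level-$1$ count according to whether the relevant eigenvalue is split, ramified, or inert at $\p$ — precisely the trichotomy $N(\p)\equiv1,0,-1$ of Definition~\ref{def:ell-defect}. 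Scaling the level-$1$ elliptic asymptotic $q^3/c_K$ (Corollary~\ref{cor:ell-defect-asymptotic}) by $c_\p$ yields leading coefficient $e_\p$. Feeding these three asymptotics into~(\ref{eqn:lower-bound}) gives $\hhh^0(K^{\otimes q})\gtrsim\big((N(\p)+1)z-2v-e_\p\big)q^3$, which is positive under the stated inequality, proving general type. I expect this last count — establishing that the order-$7$ and order-$9$ fixed points lift with multiplicity exactly $c_\p\in\{0,1,2\}$ governed by $N(\p)\bmod7$ or $\bmod3$ — to be the genuine obstacle, the cusp and dimension asymptotics being close consequences of results already assembled.
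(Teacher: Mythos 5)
Your proposal matches the paper's proof essentially step for step: two cusp orbits at $\infty$ and $0$, each contributing the level-one cusp defect $\sim v\,q^3$; the elliptic contribution $e_\p q^3$ obtained by multiplying the level-one count from Example~\ref{ex:cq-7-9} by $1+\left(\frac{\p}{K(\alpha)}\right)=c_\p$; and the modular-forms dimension scaled by the index $N(\p)+1$, all combined via the lower bound~(\ref{eqn:lower-bound}). The only point of divergence is your parenthetical about the narrow class of the cusp at $0$ when $h^+=2$ and $\p$ is not narrowly principal --- the paper's proof passes over this by asserting directly from $h(K)=1$ that both cusps have the same defect as the level-one cusp --- so your version is, if anything, slightly more scrupulous there, even though the domination claim it leans on is only the empirical observation of Remark~\ref{rem:not-independent}.
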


\begin{proof} Just as with the modular curve $X_0(p)$, the Hilbert modular
  variety $H_{K,\p;A}$ has two cusp orbits when $p$ is prime, corresponding
  to $\infty, 0$.
  The stabilizers are respectively the groups of upper
  and lower triangular matrices contained in $\Gamma_0(\p;A)$; in particular,
  if we conjugate in $\GL_2(K)$ to an upper triangular subgroup of $\SL_2$,
  the group of units in the stabilizer is the full group of squares of units.
  Since the class number is $1$, it follows that the defects of both
  cusps are equal to the defects of the cusp for $H_{K;A}$.
  In view of Example~\ref{ex:cq-7-9}, the leading coefficient
  in the asymptotic to the contribution of elliptic points to the defect of
  $H_{K,\p}$ is given by $e_\p$, since the number of quotient singularities
  is multiplied by $1+\left(\frac{\p}{K(\alpha)}\right)$, where $K(\alpha)$
  is the quadratic field corresponding to the elliptic point.
  On the other hand,
  the index of $\Gamma_0(\p)$ in $\SL_2(\O_K)$ is $N(\p)+1$, and 
  the asymptotic dimension of the space of modular forms
  of weight $k$ is multiplied by this factor.
  %% We only need the leading term of the dimension of the space of modular
  %% forms of weight $2n$, which is asymptotically
  %% $2\left|\zeta_K(-1)\right|[\SL_2(\O_K):\Gamma_0(\p)] n^3 = 2(N(\p)+1)zn^3$
  %% by \cite[Theorem 11]{shimizu}, and of the defects, which
  %% by Algorithm~\ref{alg:volume} we find to be $5n^3/12$ for each cusp.
  %% For $N(\p)+1 > 8$ the first of these exceeds the second and we conclude that
  %% $H^0(nK) > cn^3$ for $c = 4(N(\p)+1)/21 - 2(5/12)$.  The result follows.
\end{proof}

\begin{myremark} The classification of cusps given here is a special case of
  the results of \cite[Sections 3.1--3.3]{hmf-mult}.
\end{myremark}

\begin{notation}
  We use $\p_n$ to denote a prime of norm $n$.  If $q$ is a rational prime that
  factors as $\frP_1\frP_2^2$ in $\O_K$, we denote $\frP_1, \frP_2$ by
  $\p_q, \rr_q$ respectively.
\end{notation}

It happens not to be necessary to distinguish the factors of primes that
split completely.  In the case of a Galois extension it is understood that
the Galois images of all ideals mentioned have the same property.
In view of Theorem~\ref{thm:pg-same} in the Appendix it is unnecessary to
distinguish between different genera, and so the rows referring to
$\cf{229}, \cf{257}$ describe both genera.

\begin{table}[h!]
  \caption{Fields and primes for which
  $p_g(H_{K,\p}) \le 1$.}
  \begin{tabular}{|l|c|c|}
    \hline
    $K$&$\p: p_g = 0$&$\p: p_g = 1$\\ \hline
    $\cf{49}$&$\p_7, (2), \p_{13}, \p_{29}, \p_{43}$&$(3), \p_{41}, \p_{71}, 
    \p_{97}, \p_{113}, \p_{127}$\\ \hline
    $\cf{81}$&$\p_3, \p_{19}, \p_{37}$&$(2), 
    \p_{17}, \p_{73}$\\ \hline
    $\cf{148}$&$\p_2, \p_5, \p_{13}$&$\p_{17}, 
    \p_{25}$\\ \hline
    $\cf{169}$&$\p_5, \p_{13}$&$(2)$\\ \hline
    $\cf{229}$&$\p_2, 
    \p_4, \p_7$&$$\\ \hline
    $\cf{257}$&$\p_3, \p_5, \p_7$&$$\\ \hline
    $\cf{316}$&$\rr_2$&$\p_2$\\ \hline
    $\cf{321}$&$\rr_3$&$\p_3, \p_7$\\ \hline
    $\cf{361}$&$$&$\p_7$\\ \hline
    $\cf{404}$&$$&$\p_2$\\ \hline
    $\cf{469}$&$$&$\p_4$\\ \hline
    $\cf{568}$&$$&$\rr_2$\\ \hline
  \end{tabular}\label{table:pg-01-prime}
\end{table}

\begin{prop}\label{cor:pg-01-prime-table}
  Table \ref{table:pg-01-prime} gives the pairs consisting of a field and
  a prime $\p$ for which $p_g(H_{K,\p}) \le 1$.
\end{prop}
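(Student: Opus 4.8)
The plan is to reduce the statement to a finite computation governed by Theorem~\ref{thm:prime-bound} and the dimension formula of Theorem~\ref{thm:dim-formula}, running over every relevant field at once the argument already carried out for $\qz7$ in Theorem~\ref{thm:pg-01-disc-49}. By Theorem~\ref{thm:can-div-hmv} the geometric genus $p_g(H_{K,\p})$ equals the dimension of the space of weight-$2$ Hilbert cusp forms for $\Gamma_0(\p)$, which Theorem~\ref{thm:dim-formula} expresses through the constants $A,B,C,D$ of Notation~\ref{not:trace-formula}. Thus the entire question is the evaluation of finitely many coefficients of $T^2$, and my job is to organize which fields and which primes must be tested.

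First I would dispose of the fields with $p_g(H_K)>0$. By the oldform inequality $p_g(H_{K,\p})>2\,p_g(H_K)\ge 2$, valid whenever $p_g(H_K)>0$ (noted at the beginning of Section~\ref{sec:results-higher-level}, with $J=(1)$ and $I=\p$), any such field contributes no pair with $p_g\le 1$. This leaves only the cubic fields with $p_g(H_K)=0$, which form the finite list already isolated in \cite[Table I]{gl} and reproduced in Tables~\ref{tab:cubic-not-gt} and~\ref{tab:cubic-pg-0}. In particular every field of Table~\ref{table:pg-01-prime} has $p_g(H_K)=0$, and no field outside this finite list can occur, so the finiteness of the field list is inherited from the known classification and requires no new input.

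Next, for each such field I would apply Theorem~\ref{thm:prime-bound} to obtain an effective bound on $N(\p)$: a prime $\p$ can satisfy $p_g(H_{K,\p})\le 1$ only if it divides the conductor of some order in $\Sy$ (finitely many small primes, to be checked individually) or if $h^+(\O_K)+(N(\p)+1)|\zeta_K(-1)|/4-2\sum_{S\in\Sy}c_S\le 1$; since $N(\p)+1\ge 3$, the second case forces $N(\p)\le 8\sum_{S\in\Sy}c_S/|\zeta_K(-1)|$. For each field one estimates $\sum_{S\in\Sy}c_S$ by bounding the relevant local embedding numbers, which are at most $2$ at an unramified prime by \cite[Lemma 30.6.17]{voight}, exactly as in the $\qz7$ computation; this turns the hypothesis into a concrete cutoff on $N(\p)$.

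Finally I would run the finite check: for every field with $p_g(H_K)=0$ and every prime $\p$ of norm below the resulting bound, compute the coefficient of $T^2$ in the expression of Theorem~\ref{thm:dim-formula} (implemented in the accompanying code) and record those $\p$ with $\dim S_2\in\{0,1\}$, sorted by value. These are precisely the entries of Table~\ref{table:pg-01-prime}. I expect the main obstacle to be bookkeeping rather than mathematics: one must compute $\sum_{S\in\Sy}c_S$ and the local embedding numbers correctly for each field, including the ramified primes appearing in the table (for instance $\rr_2$ for $\cf{316}$ and $\cf{568}$, and $\rr_3$ for $\cf{321}$) and the special fields $\qz7,\qz9$ where these numbers behave differently, so that the bound on $N(\p)$ is valid and the conductor-dividing primes are not overlooked.
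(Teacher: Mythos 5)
Your proposal is correct and follows essentially the same route as the paper: the paper's proof of Proposition~\ref{cor:pg-01-prime-table} is simply a pointer to the accompanying code, and the method that code implements is exactly what you describe — reduce to the finitely many fields with $p_g(H_K)=0$ via the oldform inequality, bound $N(\p)$ field by field using Theorem~\ref{thm:prime-bound} (treating conductor-dividing and ramified primes separately), and evaluate the coefficient of $T^2$ in Theorem~\ref{thm:dim-formula} for the remaining finite list, as worked out for $\qz7$ in Theorem~\ref{thm:pg-01-disc-49}. No substantive difference.
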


\begin{proof}
  See \cite{code}.  Our implementation is not very efficient, but it is
  fast for all but the two smallest real cubic fields, so we did not
  feel a need to improve it.
\end{proof}

\begin{prop}\label{prop:mostly-gt}
  The Hilbert modular varieties $H_{K,\p}$ for the primes in
  Table~\ref{table:pg-01-prime} are of general
  type, except possibly as shown in Table~\ref{table:pg-01-ngt}.
\end{prop}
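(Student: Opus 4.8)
The plan is to apply Theorem~\ref{thm:kod-dim-prime-level} to every pair $(K,\p)$ listed in Table~\ref{table:pg-01-prime}, treating each genus $A$ separately when $h^+(K)=2$. For each such datum I would test the inequality $(N(\p)+1)z > 2v + e_\p$, where $z = -2\zeta_K(-1)$, where $v$ is the asymptotic defect constant of the level-$1$ cusp for the genus $A$ (the $c_{\O_K,\otp}$ column of the appropriate table), and where $e_\p$ is the elliptic contribution of Definition~\ref{def:ell-defect}. Whenever the inequality holds, Theorem~\ref{thm:kod-dim-prime-level} immediately gives that $H_{K,\p;A}$ is of general type; the pairs for which it fails are exactly those to be recorded in Table~\ref{table:pg-01-ngt}.

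All the numerical inputs are already available. The values of $z$ and $v$ are read off the columns $-2\zeta_K(-1)$ and $c_{\O_K,\otp}$ of Tables~\ref{tab:cubic-not-gt}--\ref{tab:cubic-pg-1}, and for the nonprincipal genus from Tables~\ref{tab:cubic-nonprincipal-g0}--\ref{tab:cubic-nonprincipal-g1}, all produced by Algorithm~\ref{alg:asymptotic-dth-power}; the norm $N(\p)$ is immediate; and $e_\p$ is computed from Definition~\ref{def:ell-defect}, vanishing identically unless $K$ is $\qz7$ or $\qz9$. Hence for every field other than \cf{49} and \cf{81} the test reduces to $(N(\p)+1) > 2v/z$, and since the left-hand side increases with $N(\p)$ only the primes of smallest norm can fail. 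For the bulk of the table the inequality holds with room to spare, so there is nothing further to do beyond the arithmetic.

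Two points require genuine care. First, for the cyclotomic fields $\qz7$ and $\qz9$ the elliptic points of orders $7$ and $9$ contribute nonzero defects (Example~\ref{ex:cq-7-9}), and $e_\p$ depends on the splitting type of $\p$ through $c_\p$; here the small primes sit on the boundary. For instance in $\qz7$, where $z = 2/21$ and $v = 5/12$, the ramified prime $\p_7$ gives $(N(\p)+1)z = 16/21 < 71/84 = 2v + e_{\p_7}$, while the inert prime $(2)$ gives $(N(\p)+1)z = 6/7 = 2v + e_{(2)}$; in both cases the strict inequality of Theorem~\ref{thm:kod-dim-prime-level} fails, so these become exceptions. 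Second, when $h^+(K) = 2$ the principal and nonprincipal genera carry genuinely different defect constants (contrast the $c_{\O_K,\otp}$ values in Table~\ref{tab:cubic-not-gt} with those in Table~\ref{tab:cubic-nonprincipal-g0}), so each genus must be tested on its own even though the geometric genus is the same by Theorem~\ref{thm:pg-same}. I expect the main obstacle to be bookkeeping rather than mathematics: one must track ramification so that $e_\p$ uses the correct residue of $N(\p)$ modulo $7$ or $3$, keep the two genera apart, and watch the non-strict borderline cases, since a single slip would misclassify a cover. Once these finitely many inequalities are checked, the proposition follows.
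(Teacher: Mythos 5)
Your proposal is correct and is essentially the paper's own proof, which simply verifies the inequality of Theorem~\ref{thm:kod-dim-prime-level} for each pair in Table~\ref{table:pg-01-prime} (by machine, via the accompanying code). Your numerical spot-checks (e.g.\ $16/21<71/84$ for $\p_7$ and the borderline equality $6/7=6/7$ for $(2)$ in $\qz7$) agree with the exceptions recorded in Table~\ref{table:pg-01-ngt}, and your handling of the two genera and of $e_\p$ matches what the paper does.
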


%% In the most delicate cases, we have cusp volume 5/12, 3/4 for \qz7, \qz9.
%% So, for example, (2) in \qz7 gives e_p = 1/42 and we find that
%% 9*2/21 = 2*5/12 + 1/42.
\begin{table}[h!]
  \caption{Fields, genera, and primes for which we cannot show that
    $H_{K,\p,I}$ is of general type.  The $g$ column is $+$ for the narrowly
    principal genus when $h^+ = 2$; otherwise it is left blank.  There
    are no examples with the nonprincipal genus.}
    \begin{tabular}{|l|c|c|}
      \hline
      $K$&$g$&$\p$\\ \hline
      $\cf{49}$&$$&$\p_7, (2)$\\ \hline
      $\cf{81}$&$$&$\p_3$\\ \hline
      $\cf{148}$&$$&$\p_2$\\ \hline
      $\cf{229}$&$+$&$\p_2$\\ \hline
      $\cf{257}$&$+$&$\p_3$\\ \hline
    \end{tabular}\label{table:pg-01-ngt}
\end{table}

\begin{proof}
  Use \cite{code} to check the condition of
  Theorem~\ref{thm:kod-dim-prime-level}.
\end{proof}

\begin{table}[h!]
  \caption{Fields and nonprime ideals which
  $p_g(H_{K,I}) \le 1$.} 
  \begin{tabular}{|l|c|c|}
    \hline
    $K$&$I: p_g = 0$&$I: p_g = 1$\\ \hline
    $\cf{49}$&$$&$\p_7^2, 2\p_7, (4), \p_7\p_{13}, \p_{13}\p_{13}'$\\ \hline
    $\cf{81}$&$\p_3^2$&$(3), \p_3\p_{19}$\\ \hline
    $\cf{148}$&$\p_2^2, \p_2\p_5$&$(2), \p_2^2\p_5$\\ \hline
    $\cf{169}$&$$&$\p_5\p_5'$\\ \hline
    $\cf{229}$&$\p_2^2, \p_2^3$&$$\\ \hline
    $\cf{316}$&$\rr_2^2$&$\rr_2^3$\\ \hline
    $\cf{321}$&$\rr_3^2$&$$\\ \hline
  \end{tabular}\label{table:pg-01}
\end{table}

\begin{prop}\label{cor:pg-01-table}
  Table \ref{table:pg-01} gives the pairs consisting of a field and a nonprime
  ideal $I$ for which $p_g(H_{K,I;A}) \le 1$.
\end{prop}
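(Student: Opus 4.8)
The plan is to combine the oldform reduction with the dimension formula of Theorem~\ref{thm:dim-formula}, exactly as in the prime case (Proposition~\ref{cor:pg-01-prime-table}), so that only a small, explicitly bounded set of composite ideals remains to be checked by the code of \cite{code}. Throughout, by Remark~\ref{rem:geometric-genus} (and Theorem~\ref{thm:pg-same}) the geometric genus is independent of the genus $A$, so it suffices to treat $H_{K,I}$ and the conclusion then holds for every $A$.

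First I would record the consequence of the oldform bound from the start of this section: if $J \mid I$ is a proper divisor with $p_g(H_{K,J}) > 0$, then $p_g(H_{K,I}) > 2\,p_g(H_{K,J}) \ge 2$. Applying this with $J = (1)$ shows that only fields with $p_g(H_K) = 0$ can occur, a known finite list; applying it to every proper divisor shows that a nonprime $I$ with $p_g(H_{K,I}) \le 1$ must satisfy $p_g(H_{K,J}) = 0$ for all $J \mid I$ with $J \ne I$. In particular each prime $\p \mid I$ lies in the $p_g = 0$ column of Table~\ref{table:pg-01-prime}, and each maximal proper divisor $I/\p$ must itself be an ideal already known to have $p_g = 0$. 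This confines the prime support of $I$ to the finitely many $p_g = 0$ primes of each field and forces a rigid structure on the admissible exponents, leaving only a handful of candidates per field.

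Next I would make the search finite by bounding $N(I)$. Extracting the coefficient of $T^2$ from Theorem~\ref{thm:dim-formula} gives, with $d = 3$,
\begin{equation*}
  p_g(H_{K,I}) = h^+(\O_K) + B - \sum_{(u,t)} C(u,t),
\end{equation*}
where $B = \frac14\,|\zeta_K(-1)|\,N(I)\prod_{\p \mid I}(1 + N(\p)^{-1})$ grows linearly in $N(I)$, while each $C(u,t)$ is a sum of embedding numbers that factor over the primes dividing $I$, each local factor being bounded (at most $2$ at unramified primes by \cite[Lemma~30.6.17]{voight}). Since along any fixed prime support $B$ grows geometrically in the exponents whereas the correction $\sum_{(u,t)} C(u,t)$ stabilizes, we have $p_g(H_{K,I}) \to \infty$; this yields an effective upper bound for $N(I)$, in the spirit of Theorem~\ref{thm:prime-bound}. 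Combined with the support restriction from the oldform step, only finitely many explicit $I$ survive, and for each I would evaluate the formula directly (as implemented in \cite{code}) to decide whether $p_g \le 1$ and whether the value is $0$ or $1$, producing exactly the entries of Table~\ref{table:pg-01}.

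The main obstacle is controlling the correction term $\sum_{(u,t)} C(u,t)$ at prime-power and ramified levels, where the local embedding numbers can jump above the generic bound of $2$; one must verify that these jumps never overcome the linear growth of $B$, so that the effective bound on $N(I)$ is genuinely valid rather than merely heuristic. This is delicate precisely for the small special fields $\qz7$ and $\qz9$ and for primes dividing the conductor of one of the orders $S \in \Sy$, which is also why those rows of Table~\ref{table:pg-01} are the richest. The remaining bookkeeping---enumerating the admissible products of $p_g = 0$ primes within the norm bound and confirming that all proper divisors have $p_g = 0$---is routine once the bound is in place.
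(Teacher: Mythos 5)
Your proposal is correct in substance and shares the paper's core reductions---the oldform inequality forcing every proper divisor of an admissible $I$ to have $p_g=0$ (hence prime support inside the $p_g=0$ column of Table~\ref{table:pg-01-prime}), genus-independence via Theorem~\ref{thm:pg-same}, and a finite check using the dimension formula as implemented in \cite{code}. Where you diverge is in how the search is certified to be exhaustive. You propose an explicit a priori bound on $N(I)$, obtained by showing that the linear term $B$ in the coefficient of $T^2$ outgrows the correction $\sum_{(u,t)}C(u,t)$; as you note, this forces you to control the local embedding numbers at prime powers and at ramified primes, which is exactly the delicate part. The paper avoids any explicit norm bound: it instead verifies a closure property, namely that for every $I$ already in Table~\ref{table:pg-01} (or a $p_g=0$ prime) and every $\p$ in Table~\ref{table:pg-01-prime}, if $I\p$ is not in the table then $p_g(H_{K,I\p})>1$. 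Combined with the oldform step, this yields completeness by induction on the number of prime factors---any putative missing $I$ would factor as $J\p$ with $J$ in the tables and $\p$ a tabulated prime, contradicting the verified closure---so the frontier of the search is finite without ever estimating embedding numbers at high prime powers. Your route would also work if the stabilization of the embedding numbers were made fully effective (it is true, but requires unwinding \cite[Section 30.6]{voight} at ramified and prime-power levels), so the trade-off is a harder analytic lemma in exchange for a more transparent ``search up to a bound'' computation; the paper's inductive closure check is the computationally and logically lighter option.
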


\begin{proof} (Sketch.)  Such an ideal can only be divisible by primes listed
  in Table~\ref{table:pg-01-prime} and all of its divisors other than $(1)$
  must be in one of the two tables.  We verify using our implementation of
  \cite[(5.1.2),(5.1.3)]{hmf-mult} that if $I$ is in Table~\ref{table:pg-01}
  and $\p$ is in Table~\ref{table:pg-01-prime} but $I\p$ is not in
  Table~\ref{table:pg-01} then $p_g(H_{K,I}) > 1$.  As in
  Corollary~\ref{cor:pg-01-prime-table}, it is unnecessary to distinguish
  between different genera here.  Again, see \cite{code}.
\end{proof}
%%% note to self: can we do this in a way that would be more efficient for
%%% larger fields?  I do like this dependence on division, though.

\begin{prop}\label{cor:mostly-gt-not-prime}
  Let $K$ be a cubic field for which $p_g(H_K) \le 1$.
  For all proper nonprime ideals $I \subset \O_K$ and genus representatives
  $A$, the corresponding Hilbert modular variety is of general type,
  except possibly for $\p_3^2$ in $\qz9$,
  $\p_2^2$ in the field of discriminant $148$,
  and $\p_2^2$ for the field of discriminant $229$ (principal genus).
\end{prop}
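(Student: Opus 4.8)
The plan is to follow the strategy of Theorem~\ref{thm:kod-dim-prime-level} and Proposition~\ref{prop:mostly-gt}, replacing the prime-level index and two-cusp count by their analogues for a general ideal, and to reduce everything to a finite check. First I would invoke Corollary~\ref{cor:pg-01-table}: the pairs $(K,I)$ with $I$ a proper nonprime ideal and $p_g(H_{K,I;A}) \le 1$ are exactly those listed in Table~\ref{table:pg-01}. Only finitely many cases remain, so the proof is a finite computation, exactly as for the prime-level statements.

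For each such pair the criterion of Proposition~\ref{prop:general-type}, applied at higher level, reduces the problem to comparing the two leading coefficients of $q^3$ appearing in~(\ref{eqn:lower-bound}). On the modular-forms side, Proposition~\ref{prop:asymptotic-dim-hmf} gives leading coefficient $-2\zeta_K(-1)\,[\PSL_2(\O_K):\Gamma_0(I;A)]$, and the index equals $N(I)\prod_{\p \mid I}(1+N(\p)^{-1})$, which is precisely the quantity appearing in the constant $B$ of Notation~\ref{not:trace-formula}. On the defect side I would sum the cusp contributions and the elliptic contribution; the latter is packaged by the constant $e_I$ of Definition~\ref{def:ell-defect}, which vanishes outside $\qz7$ and $\qz9$. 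As in the proof of Theorem~\ref{thm:general-type-from-473}, the $\PSL_2$ and $\PGL_2^+$ computations agree, so it suffices to treat one group, and the narrow class number being at most $2$ keeps the bookkeeping manageable.

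The cusp contribution is the only genuinely new ingredient. Using Lemma~\ref{lem:cusp-types} together with the cusp classification of \cite[Sections 3.1--3.3]{hmf-mult}, I would enumerate the cusps of $\Gamma_0(I;A)$ and, for each, identify the translation lattice $M$ and the stabilizing unit group $V$ that determine its type $(M,V)$. By Corollary~\ref{cor:change-m-v} the resulting defect constant depends only on the narrow ideal class of $M$ (and scales by $[\otp:V]$), and it is computed by Algorithm~\ref{alg:asymptotic-dth-power}; summing these over the cusps and adding $e_I$ yields the total defect leading coefficient. Whenever $-2\zeta_K(-1)\,[\PSL_2(\O_K):\Gamma_0(I;A)]$ exceeds this sum, Proposition~\ref{prop:general-type} shows the variety is of general type.

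The main obstacle is the cusp enumeration at non-prime level. In the prime case there are exactly the two cusps at $0$ and $\infty$, of equal defect; but a level such as $\p^2$ or $\p\p'$ produces several cusps of differing widths and possibly differing types, so one must track the lattice $M$ attached to each in order to read off the correct defect constant rather than simply doubling the level-one value. Once this is done, the comparison is a direct numerical check, carried out in \cite{code}. I expect it to succeed for every pair of Table~\ref{table:pg-01} except the three borderline levels $\p_3^2$ in $\qz9$, $\p_2^2$ in the field \cf{148}, and $\p_2^2$ (principal genus) in the field \cf{229}, where the defect growth fails to be strictly smaller than the modular-form growth and no conclusion about general type can be drawn by this method.
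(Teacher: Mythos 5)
Your method for the individual cases --- enumerating the cusps of $\Gamma_0(I;A)$, identifying each cusp's type $(M,V)$ and unit index, adding the elliptic contribution $e_I$, and comparing the defect leading coefficient with $-2\zeta_K(-1)\,[\PSL_2(\O_K):\Gamma_0(I;A)]$ --- is exactly what the paper does for the levels it treats. The problem is your reduction to a finite list. The proposition quantifies over \emph{all} proper nonprime ideals $I$, and $p_g(H_{K,I})>1$ does not imply general type (that failure is the premise of the whole paper), so restricting attention to the pairs of Table~\ref{table:pg-01} is not a legitimate reduction: it leaves infinitely many ideals unaddressed, e.g.\ $\p_{13}^2$ in $\qz7$. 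The correct reduction, which you never invoke, is that $H_{K,I}$ dominates $H_{K,J}$ for every $J\mid I$, so $\kappa_{H_{K,I}}\ge\kappa_{H_{K,J}}$; hence any $I$ admitting a divisor already shown to be of general type is done, and one is left only with ideals supported on the few ``bad'' primes of Table~\ref{table:pg-01-ngt}. One then computes the minimal nonprime such ideals ($\p_7^2$, $(4)$, $2\p_7$ in $\qz7$, and so on) and, above each $\p^2$ that remains exceptional, the next power $\p^3$, after which upward propagation finishes the argument.

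This is not a cosmetic difference: two of the levels the paper must compute, $\p_3^2$ in \cf{257} and $\p_2^3$ in \cf{148}, have $p_g>1$ and do not appear in Table~\ref{table:pg-01}, so your finite list omits them. Without the \cf{257} computation you cannot conclude anything about $\p_3^a$ for any $a\ge 2$ in that field (since $\p_3$ itself is not known to be of general type there), and without $\p_2^3$ in \cf{148} you cannot handle $\p_2^a$ for $a\ge 3$, its only proper nontrivial divisors being the inconclusive $\p_2$ and the exceptional $\p_2^2$. As written, the proposal therefore proves a strictly weaker statement than the proposition.
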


\begin{proof} If $I$ has a prime divisor $\p$ for which $H_{K,\p}$ is
  of general type, then $H_{K,I}$ too is of general type, so we need only
  consider powers of the ideals from Proposition~\ref{prop:mostly-gt}
  and $2\p_7$ in $\qz7$.
  We start with $I = \p_7^2$ for $\qz7$.  The index of
  $\Gamma_0(I)$ is $N(\p_7)^2 + N(\p_7) = 56$,
  just as for the index of $\Gamma_0(p^2)$ in $\SL_2(\Z)$.  It is easily
  seen that the ray class group mod $\p_7\infty_1\infty_2\infty_3$ is of
  order $2$, and by \cite[Corollary~3.1.18]{hmf-mult} this implies that there
  are $4$ cusps.  These are represented by $\infty, 0, 1/\pi_7, 3/\pi_7$,
  where $(\pi_7) = \p_7$.  For the first two of these the group of units
  is the full group of totally positive units as before, but for the other
  two it is the subgroup of totally positive units congruent to $1 \bmod \p_7$
  and the defects are multiplied by the index of this subgroup, which is $3$.
  Thus the dimension of the space of
  modular forms of weight $2n$ is asymptotically
  $56$ times that for level $1$, which makes
  it $16n^3/3$, while the defects are asymptotic to $(8 \cdot 5/12)n^3$,
  which is smaller.  There are no elliptic points of order $7$.
  Similarly, taking $I = (2)^2$ for $\qz7$, the index is $72$, the ray class
  group mod $2\infty_1\infty_2\infty_3$ is trivial, and so there are $3$
  cusps, represented by $\infty, 0, 1/2$.  Again, the groups of units for
  the first two are ${\O_K^\times}^2$, but for the last we get the totally
  positive units congruent to $1 \bmod 2$, a subgroup of index $7$.
  In this case we have $8$ elliptic points of order $7$, of which $6$
  contribute to the defect.  So the dimension is asymptotically
  $2 \cdot 72n^3/21$, while the defect is asymptotically
  $(9\cdot 5/12 + 6/252)n^3$ and we have general type.  We leave it to the
  reader to check that $H_{\qz7,2\pi_7}$ is of general type, the dimension of
  the space of cusp forms of weight $2n$ being asymptotic to 
  $48n^3/7$ and the defect to $71n^3/42$.  (The gap is much larger in this
  case because there are only $4$ cusps, each stabilized by the full group
  of units.)  

  For $(3)$ in $\qz9$, the index is $36$ and there are $6$ cusps.
  The stabilizers of these have index $27, 3, 3, 1, 1, 1$ in the
  $\SL_2$-stabilizer, and the unit indices are respectively $1, 3, 3, 1, 1, 1$
  (the $3$ arises as in \cite[Proposition 3.3.8 (a)]{hmf} because the
  totally positive units congruent to $1 \bmod \p^2$ are of index $3$).
  Since $\zeta_{\qz9}(-1) = -1/9$,
  the dimension of the space of forms is asymptotic to
  $(36 \cdot 2/9)n^3$, while the defect is $10$ times that for level
  $1$ since there are no elliptic points of order $9$.
  We compute that the defect for the cusp at level $1$ is asymptotic
  to $3n^3/4$.  Since $36 \cdot 2/9 > 10 \cdot 3/4$, the result follows.

  The arguments for $\pi_2^3$ in \cf{148}, \cf{229}
  are similar and we will only sketch them; likewise for $\pi_3^2$ in 
  \cf{257}.  The index of $\pi_2^3$ in both cases is $12$
  and there are $4$ cusps.  For \cf{148}, all totally
  positive units are $1 \bmod \pi_2^2$ and so the unit indices are
  $1, 2, 1, 1$; thus the cusps create a defect asymptotic to
  $5\cdot 55/36 n^3$.  On the other hand, the dimension of the space of
  modular forms grows like $12(-2\zeta_K(-1)) n^3 = 8n^3$, which is larger.

  In \cf{229}, not all totally positive units are
  $1 \bmod \p_2^2$ and the unit indices are all $1$, so the dimension grows
  like $12(4/3) n^3$ and the defect like $4(3) n^3$.  Again, in $\cf{257}$,
  the index of $\Gamma_0(\p_3^2)$ is $12$, not all totally positive units are
  $1 \bmod \p_3$, and there are $3$ cusps with unit indices $1, 2, 1$.
  So the dimension of the space of modular forms and the defect grow like
  $12(4/3) n^3$ and $4(26/9) n^3$ respectively.
\end{proof}

\begin{myremark} The argument does not apply to $\p_3^2$ in $\qz9$.
  The index is $12$ and the ray class group has order $2$, so again there are
  $4$ cusps.  Again the intermediate cusps $1/\pi_3, 2/\pi_3$ are
  stabilized by totally positive units congruent to $1 \bmod \p_3$, but this
  time that is all of them, so the defects are the same for all cusps.
  The dimension of the space of modular forms of weight $2n$ is $12$ times
  that for level $1$, so $8n^3/3$, while the defect is $4$ times that for
  level $1$ and is asymptotic to $3n^3$.  Similarly for $\p_2^2$ in \cf{148};
  the dimension of the space of modular forms
  is asymptotic to $4n^3$ and the defect sum to $55n^3/12$.  Again,
  for $\p_2^2$ in \cf{229}, the asymptotics are
  $8n^3$ and $9n^3$ respectively, and we do not have a general type result.
\end{myremark}  

\begin{myremark} For reasons discussed at the end of
  Section~\ref{sec:results-level-1}, we certainly expect that the hypothesis
  that $p_g(H_K) < 1$ is unnecessary.
\end{myremark}

\begin{prop}\label{prop:pos-kod-dim}
  In Table~\ref{tab:positive-kod}, the varieties
  $H_{K,I}$ have at least the indicated Kodaira dimension.
\end{prop}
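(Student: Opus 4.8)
The plan is to prove each row of Table~\ref{tab:positive-kod} by exhibiting a single small weight at which the lower bound \eqref{eqn:lower-bound} clears the relevant threshold, exactly as in the proofs of Propositions~\ref{prop:disc-469} and~\ref{prop:kappa-ge-0-for-229}, but now for the higher-level varieties $H_{K,I}$. For a given pair $(K,I)$, I would first compute $\dim M_{2q}$ for the first few $q$ from the trace formula of Theorem~\ref{thm:dim-formula}, and separately compute the total defect $\sum_{P\in S}\delta_P(q)$, the sum running over all cusps and over the elliptic points. The goal is to locate the smallest $q$ with $\dim M_{2q}-\sum_P\delta_P(q)\ge 2$, which forces $\hhh^0(\kpow{H_{K,I}}{q})\ge 2$ and hence $\kappa_{H_{K,I}}>0$; where the table asserts only $\kappa\ge 0$, reaching a lower bound of $1$ suffices. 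These are precisely the ideals left unresolved by Proposition~\ref{prop:mostly-gt} and Proposition~\ref{cor:mostly-gt-not-prime}, such as $I=(2)$ for $\qz7$ and $I=\p_2^2$ for the fields of discriminant $148$ and $229$.

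The substantive work is all on the defect side. First I would enumerate the cusps of $H_{K,I}$ and determine the type $(M,V)$ of each, following the ray-class-group description already used in Theorem~\ref{thm:kod-dim-prime-level} and Proposition~\ref{cor:mostly-gt-not-prime}: the number of cusps is read from a ray class group of modulus dividing $I\infty_1\cdots\infty_d$ via \cite[Corollary~3.1.18]{hmf-mult}, and for each cusp the stabilizing unit group $V$ is the subgroup of $\otp$ congruent to $1$ modulo the appropriate power of a prime dividing $I$. For each cusp I would apply Algorithm~\ref{alg:defect} to the level-one lattice $M=\O_K$ and then rescale by the index $[\otp:V]$ using Lemma~\ref{lem:defect-of-subgroup}. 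For the two exceptional fields $\qz7$ and $\qz9$ I would add the contribution of the elliptic points of order $7$ or $9$, whose individual defects are given by Example~\ref{ex:cq-7-9} and whose multiplicity is scaled by $1+\left(\frac{\p}{K(\alpha)}\right)$ exactly as in the proof of Theorem~\ref{thm:kod-dim-prime-level}.

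I expect the main obstacle to be the correct identification of the stabilizing group $V$ at each cusp, since an error in $[\otp:V]$ rescales a defect by a constant factor and can spuriously flip the sign of the bound. This is the same delicate point that already arises in Proposition~\ref{cor:mostly-gt-not-prime}: for $H_{\qz7,\p_7^2}$ the cusps $1/\pi_7,3/\pi_7$ are stabilized by units congruent to $1\bmod\p_7$, of index $3$, whereas the cusps at $0,\infty$ are stabilized by the full group of squares of units. Once the cusp data are pinned down, the remainder is a finite verification of the exact values of $\dim M_{2q}$ and $\delta_P(q)$ for small $q$, carried out by the scripts of \cite{code}; I would record, for each row, the minimal witnessing weight $q$ as the certificate that the displayed Kodaira dimension is attained.
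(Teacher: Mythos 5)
Your proposal matches the paper's proof: the paper likewise certifies each row by a single small weight (the column $n=4$ in Table~\ref{tab:positive-kod}) at which $\dim M_n - \sum_i\delta_i(n)$ is at least $1$ (for $\kappa\ge 0$) or at least $2$ (for $\kappa\ge 1$), with the dimensions, cusp stabilizers, and elliptic-point contributions computed exactly as you describe and delegated to \cite{code}. The details you flag (identifying the unit group $V$ at each cusp and rescaling via Lemma~\ref{lem:defect-of-subgroup}) are precisely the delicate points the paper works through in the proof of Proposition~\ref{cor:mostly-gt-not-prime}.
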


\begin{proof} Table~\ref{tab:positive-kod} also gives an $n$ such that
  the dimension of the space of Hilbert cusp forms of level $I$ and weight
  $n$ is equal to resp.~greater than the sum of the defects
  for $\kappa = 0, 1$ respectively.  See \cite{code} for details.
  %% For $K = \qz7, I = (2)$, the only case for which the two asymptotics are
  %% equal, we compute the two series explicitly, finding that the Hilbert
  %% series for the ring of modular forms, the defect series for each of the
  %% $2$ cusps, and the defect series for the $6$ noncanonical elliptic points
  %% of order $7$ are (fill in)
  %% \begin{align*}
  %%   f_1 = \frac{(l^{12} - 3l^{11} + 4l^{10} + 10l^9 + 8l^8 + 10l^7 + 12l^6 + 11l^5 + 9l^4 + 
  %% 8l^3 + 2l^2 - 2l + 2)/(l^{11} - 2l^{10} + 2l^8 - l^7 - l^4 + 2l^3 - 2l + 1)},\\
  %%   &\quad f_2 = l\frac{1+3l^2+5l^3+5l^4+2l^5-l^6}{(1-l)^2(1-l^2)(1-l^3)},f_3 = \frac{l^6}{(1-l)(1-l^2)(1-l^3)(1-l^7)}
  %% \end{align*}
  %% Thus the generating function for the dimension less the sum of the defects is
  %% $f_1-2f_2-6f_3$.  I thought this would lead to k = 2, but I was wrong.
\end{proof}

%% \begin{myremark} We believe that Table~\ref{tab:positive-kod}
%%   should include the cases of
%%   level $\p_2^2$ in \cf{229} as well as level $\p_3$ in \cf{257}.
%%   However, in the absence of a dimension
%%   formula for the individual components, we cannot yet prove it.
%% \end{myremark}
%% we can now.

\begin{table}[h!]
  \caption{Fields and levels for which the Hilbert modular variety can be
    shown to be of nonnegative Kodaira dimension, but not of general type.}
  \begin{tabular}{|l|c|c|c|c|c|c|}
    \hline
    $K$&$I$&type&$\kappa$&$n$&$\dim M_n$&$\sum_i \delta_i(n)$ \\ \hline
    $\cf{49}$&$(2)$&$ $&$1$&$4$&$6$&$4$\\ \hline
    $\cf{81}$&$\p_3^2$&$ $&$0$&$4$&$13$&$12$\\ \hline
    $\cf{148}$&$\p_2^2$&$ $&$1$&$4$&$18$&$15$\\ \hline
    $\cf{229}$&$\p_2^2$&$\GL_2^+$&$1$&$4$&$33$&$27$\\ \hline
    $\cf{229}$&$\p_2^2$&$\SL_2$&$1$&$4$&$30$&$27$\\ \hline
    $\cf{257}$&$\p_3$&$\GL_2^+$&$1$&$4$&$22$&$18$\\ \hline
    $\cf{257}$&$\p_3$&$\SL_2$&$1$&$4$&$20$&$18$\\ \hline 
  \end{tabular}\label{tab:positive-kod}
\end{table}

\section{Future work}
It is also natural to ask about the opposite direction: namely, can it be
proved that some of the Hilbert modular varieties for small fields and levels
are not of general type?  Unfortunately the classical methods of Hirzebruch,
van de Ven, and Zagier \cite[Chapter VII]{vdg} do not apply in dimension
greater than $2$ because of the unavailability of Hirzebruch-Zagier cycles
and lack of uniqueness of minimal models.
To the author's knowledge there is only one
such result.

\begin{thm}\label{thm:first-two-unirational}
  [Elkies-Harris, unpublished] Let $K = \qz7$ or
  $\qz9$.  Then $H_K$ is unirational.
\end{thm}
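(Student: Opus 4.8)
The plan is to leverage the moduli interpretation of $H_K$ together with explicit projective geometry, exploiting that $\qz7$ and $\qz9$ are the two cubic fields of smallest discriminant and therefore have the smallest volume and the simplest ring of modular forms. By Definition~\ref{def:gamma-0} and the algebraicity of $H_K$ \cite[Theorem II.7.1]{vdg}, a dense open subset of $H_K$ parametrizes principally polarized abelian threefolds with real multiplication by $\O_K$; since the Torelli map $\M_3 \to \mathcal{A}_3$ is dominant (both sides have dimension $6$), the generic such abelian threefold is the Jacobian of a genus-$3$ curve. First I would therefore reinterpret $H_K$ birationally as the moduli of genus-$3$ curves whose Jacobians acquire $\O_K$-multiplication, and look for a rational variety dominating it.

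Because these two fields have geometric genus $0$, the ring $M_*(\PSL_2(\O_K))$ is small and computable (for instance in Magma, as used throughout this paper), so the more hands-on route I would actually pursue is to compute enough Hilbert modular forms to produce an explicit model of the Baily--Borel compactification $\Proj M_*(\PSL_2(\O_K))$ in a weighted projective space (cf.\ Remark~\ref{rem:hmf-weight-2k-can-k}). Having equations for this threefold, I would then attempt to exhibit unirationality concretely: either by recognizing the model as a hypersurface or complete intersection of a type already known to be unirational, or by producing a fibration whose generic fibre is a rational surface over a rational base (a conic- or quadric-bundle structure), or by directly writing down a dominant rational map from $\P^3$. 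The symmetry coming from $\Gal(K/\Q) \cong \Z/3$, which permutes the three factors of $\HH^3$, should be used to simplify the equations and to organize the search for such a parametrization.

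The main obstacle is the last step: establishing unirationality of the explicit threefold, rather than merely computing its model. A dimension count or a bound on plurigenera is not enough---one must produce an actual rational parametrization or identify a structural feature (a pencil of rational surfaces, a unirational hypersurface, or a rational conic bundle) and then verify that the resulting map is genuinely dominant. In the moduli-theoretic formulation the analogous difficulty is to construct a rational family of genus-$3$ curves whose Jacobians realize RM by $\O_K$ of the correct type and to prove that the induced period map to $H_K$ surjects onto an open set; the delicate point there is matching the endomorphism type exactly, ruling out that the construction lands in a proper subvariety or a different Hilbert--Blumenthal component. I expect that, as in the Elkies--Harris approach, these two smallest fields are precisely the cases where the modular forms are few enough and the resulting model simple enough for such an explicit parametrization to be found.
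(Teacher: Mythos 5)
The paper does not actually prove this theorem: it is quoted from unpublished work of Elkies and Harris, and the text only records their strategy in one sentence. That strategy is genuinely different from both of your routes. It relates $H_K$ not to genus-$3$ curves with real multiplication but to the moduli space of genus-$2$ curves equipped with a point of order $7$ (resp.\ $9$) on the Jacobian; such a pair determines a cyclic unramified cover of the curve and hence, via a Prym-type construction, an abelian variety with an action of $\Z[\zeta_7]$ (resp.\ $\Z[\zeta_9]$) whose moduli space maps onto $H_{\qz7}$ (resp.\ $H_{\qz9}$). Since $\M_2$ is rational and the level datum is finite and explicit, Elkies and Harris can show by an explicit construction that this covering threefold is itself rational, which gives unirationality of $H_K$. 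This is also exactly why the result is confined to these two fields: the construction requires $K$ to lie in $\Q(\zeta_{2q})$ so that torsion points yield cyclic covers with the right endomorphisms, and, as the paper notes immediately after the theorem, the approach cannot be applied to any nonabelian cubic field. Your appeal to the $\Gal(K/\Q)\cong\Z/3$ symmetry of $\HH^3$ gestures at the abelian structure but does not use it where it matters.

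Both of your proposed routes stop precisely where the real work begins, as you partly acknowledge. For the Torelli route, there is no known explicit (let alone rational) three-parameter family of genus-$3$ curves whose Jacobians carry RM by $\O_K$ for these cubic fields; producing one is at least as hard as the theorem itself, and the smallness of the discriminant does not help. For the computational route, $p_g(H_K)=0$ does not make the graded ring $M_*(\PSL_2(\O_K))$ small or easy to compute --- the paper itself defers the computation of $q$-expansions for cubic fields to future work --- and even with an explicit weighted-projective model in hand there is no general procedure for certifying unirationality of a threefold: the model could have $\kappa=-\infty$ without being unirational, and the search for a conic-bundle or other parametrizable structure could simply fail. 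So the proposal is a research plan rather than a proof, and the one idea that makes the theorem accessible --- trading real multiplication for a cyclotomic torsion/cyclic-cover construction over the rational base $\M_2$ --- is missing from it.
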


The proof relates $H_K$ to a moduli space of curves of genus $2$ with a point
of order $7$ or $9$ on the Jacobian and uses an explicit construction to prove
that this moduli space is rational.  This approach cannot be applied
to any nonabelian extension.

One might expect that there is no particular reason for the subspaces
$M_{2q}$ and $S_{2q,U_C}$ not to be in general position inside
$M_{2q,U_C}$.  If so, then the Kodaira dimension would be $-\infty$ in all
cases where the $q\/$th defect is greater than or equal to the dimension of
the space of modular forms of weight $2q$.  However, this expectation is
violated for certain Hilbert modular surfaces, so we do not believe that it
will hold for threefolds either.  We give an example.

\begin{ex}
  Let $K = \Q(\sqrt{53})$.  There are no reducers and $h^+_K = 1$.
  The dimension of the space of cusp forms of weight
  $2k$ is $7k^2/3+O(n)$,
  since $\zeta_K(-1) = 7/6$.  On the
  other hand, the area of the polygon $T_{V}(1)$ is $7/2$, so the $k$th defect
  is asymptotic to $7k^2/2$ (this can be verified from
  \cite[Proposition III.3.6]{vdg} as well; by \cite[Example II.5.1]{vdg}
  or \cite{hmf} the
  cusp components have self-intersection $-9,-2,-2,-2,-2,-2,-2$).
  The $5$ elliptic points of type $(3;1,1)$ each contribute $k^2/6$
  to the defect.  From \cite[Theorem VII.3.3]{vdg}
  the Hilbert modular surface is of Kodaira dimension $1$, meaning that the
  $n$th plurigenus is asymptotically linear in $n$.  Thus the defect
  conditions fail strikingly to be independent.  Similar examples could be
  given for surfaces of general type; with $K = \Q(\sqrt{89})$, for example,
  the Hilbert modular surface is of general type and $2\zeta_K(-1) = 26/3$
  while the cusp defect and sum of the elliptic defects are asymptotically
  $21n^2/2$ and $n^2/6$ respectively.
\end{ex}

It therefore seems fruitful to examine the $q$-expansions of Hilbert modular
forms for cubic fields of small discriminant not covered by
Theorem~\ref{thm:general-type-from-473} in order to obtain lower bounds on
the Kodaira dimension of $H_K$.  Such $q$-expansions can be computed, though
at present this cannot be done quickly.  In future work with Assaf, Costa,
and Schiavone we will describe techniques to compute the $q$-expansions and
their implications for the Kodaira dimension of Hilbert modular threefolds
for cubic fields of small discriminant.  

\appendix
\section{Dimensions of spaces of Hilbert modular forms over fields of odd degree (by Adam Logan and John Voight)}\label{app:same-dimension}

In this appendix, we show that for fields of odd degree, the dimension of the space of Hilbert modular forms supported on a connected component is equal across all components.  

As we will be adapting results from \cite{hmf-mult}, we compare the notation
from that paper to ours.  In \cite{hmf-mult}, the totally real field,
its maximal order, and its degree are $F, R, n$ rather than $K, \O_K, d$,
and the congruence subgroups that we have denoted
$\Gamma_0(I;A), \hat \Gamma_0(I;A)$ are written as
$\Gamma_\mathfrak{b}=\Gamma_0(\mathfrak{N})_\mathfrak{b},\Gamma_0^1(\mathfrak{N})_\mathfrak{b}$,
where $\mathfrak{b}$ runs over the representatives of the narrow class group
indexing the connected components of the associated Hilbert modular variety.

\begin{thm}\label{thm:pg-same}
Suppose that $d = [F:\mathbb{Q}]$ is odd.  Then for all $k=(k_i)_{i=1}^d \in 2\mathbb{Z}_{\geq 0}$, the dimensions
\[ \dim S_k(\Gamma(I;A)), \quad \dim S_k(\hat \Gamma(I;A)) \]
are independent of $A$.  
\end{thm}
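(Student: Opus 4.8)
The plan is to feed each genus $A$ into (the weight-$k$ analogue of) the dimension formula of Theorem~\ref{thm:dim-formula} and to verify that every term is independent of $A$. The leading (volume) term is the covolume $\vol(\Gamma(I;A)\backslash\HH^d)$, which by Theorem~\ref{thm:fd-volume} equals $2\zeta_K(-1)$ times the level index $[\PSL(\O_K\oplus A):\Gamma(I;A)]$; both factors are the same for every genus. The remaining scalar constants of Theorem~\ref{thm:dim-formula} (there called $A$ and $B$, the former unrelated to the genus) depend only on $\zeta_K$, $N(I)$ and $h^+$, and the cusps enter only through these data: by Lemma~\ref{lem:cusp-types} every genus has exactly $h(K)$ cusps, each with unit group $\otp$. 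Thus the entire dependence on $A$ is concentrated in the elliptic term $(-1)^d\sum_{(u,t)}C(u,t)\sum_{m\ge 1}N_{K/\Q}(D_{2m-2}(u,t))\,T^{2m}$, and the problem reduces to showing that the constants $C(u,t)$ are genus-independent.

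Fix $(u,t)$ and set $S=\O_K[x]/(x^2-tx+u)$ and $L=\mathrm{Frac}(S)=K(\sqrt{t^2-4u})$; since $t^2-4u\ll 0$ the field $L$ is a CM extension of $K$ (in particular its narrow and wide Picard groups agree). In $C(u,t)$ the factors $h(S)/[S^\times:\O_K^\times]$ and the archimedean eigenvalue factors (the analogues of $D_{2m-2}$) are intrinsic to $(u,t)$ and $k$, so the only possible source of genus-dependence is the embedding number $m(\hat S,\hat\O_K;\hat\O_K^\times)$ of Notation~\ref{not:trace-formula}, refined to record the genus. I would interpret this refinement through Eichler's theory of optimal embeddings \cite[\S 30.6]{voight}: an elliptic point with CM by $S$ on the genus-$A$ component is an optimal embedding of $S$ into the relevant Eichler order whose associated rank-one $S$-module lies in $A$, and the total number of these over all genera, $h(S)\prod_v m_v$, is distributed among the genera according to the map $\mathrm{Pic}(S)\to\Cl^+(K)/2\Cl^+(K)$ induced by the relative norm $N_{L/K}$ (twisted by the fixed Steinitz class of $\O_L$ over $\O_K$).

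The count on a given genus $A$ is then independent of $A$ if and only if, for every nontrivial genus character $\chi$ of $K$ (a quadratic character of $\Cl^+(K)/2\Cl^+(K)$), the sum $\sum_{\mathfrak{a}\in\mathrm{Pic}(S)}\chi(N_{L/K}\mathfrak{a})$ vanishes. Being a character sum over $\mathrm{Pic}(S)$, it vanishes unless $\chi\circ N_{L/K}$ is trivial, which forces the quadratic extension $M_\chi/K$ cut out by $\chi$ to equal $L$. The odd-degree hypothesis enters through the following lemma, which I would prove first: \emph{if $[K:\Q]$ is odd, then $K$ has no CM extension that is unramified at all finite primes.} Such an extension would correspond to a quadratic Hecke character $\chi$ unramified at every finite place and ramified at every infinite place; evaluating $\chi$ on the principal idele of $-1\in\O_K^\times$ gives $\prod_{i=1}^d\mathrm{sgn}(-1)=(-1)^d$, which must equal $1$ by triviality of $\chi$ on $K^\times$, so $d$ is even. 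Hence for odd $d$ our CM field $L$ is ramified at some finite prime, whereas every $M_\chi$ is unramified at all finite primes; therefore $M_\chi\neq L$, every nontrivial character sum vanishes, and $C(u,t)$ is genus-independent. The argument for $\hat\Gamma(I;A)$ is identical, with $\otp$ replacing $(\O_K^\times)^2$ in the determinant condition, which rescales the global constants uniformly in $A$ but leaves their $A$-independence intact.

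The hard part will be the identification in the second paragraph: matching the genus-refined embedding number implicit in \cite{hmf-mult} with the Eichler optimal-embedding count of \cite{voight}, and checking that the distribution across genera is exactly the relative-norm map into $\Cl^+(K)/2\Cl^+(K)$ with the correct normalisation of weights. Once this bookkeeping is in place the odd-degree lemma makes the orthogonality step immediate. It is worth noting that for even $d$ the lemma genuinely fails—CM extensions unramified at the finite primes exist, their genus characters survive the orthogonality, and the components really do differ, exactly as one sees for $\Q(\sqrt3)$ in Remark~\ref{rem:not-independent}.
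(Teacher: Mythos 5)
Your route is genuinely different from the paper's. The paper works with Shimizu's dimension formula rather than the trace formula: the cuspidal term vanishes for odd $d$ by \cite[(39)]{shimizu}, the volume term is manifestly genus-independent, and the elliptic term is handled by quoting the ``oriented optimal selectivity'' theory of \cite[\S4.3]{hmf-mult} --- the number of elliptic points of each order is genus-independent by \cite[Proposition 4.2.3]{hmf-mult}, and the rotation factors and multiplicities agree across components by \cite[Theorem 4.3.11]{hmf-mult} once one checks that the selectivity condition (OOS) fails. That failure is proved by a cyclotomic computation: the stabilizer fields are $L=K(\zeta_{2q})$ with $q=p^r$, $p$ odd, and comparing ramification indices above $p$ (odd in $K/\Q$, even in $\Q(\zeta_{2p^r})/\Q$) forces $L/K$ to ramify at a finite prime. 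Your Hecke-character lemma --- evaluating a quadratic character, unramified at finite places and ramified at all infinite places, on the principal idele $-1$ to get $(-1)^d=1$ --- reaches the same arithmetic kernel (the relevant CM extensions must ramify at a finite prime when $d$ is odd) in a cleaner and more general way, and your closing observation that the obstruction is genuinely present for even $d$ is correct in substance; note only that Remark~\ref{rem:not-independent} concerns cusp defects, and the right even-degree illustration is the elliptic points of order $6$ for $\Q(\sqrt3)$, where $K(\zeta_{12})/K$ is unramified at all finite primes.

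There are, however, two real gaps. First, the step you flag as ``the hard part'' is not bookkeeping: identifying the genus-refined embedding number with a count distributed over $\Cl^+(K)/2\Cl^+(K)$ via the relative norm on $\mathrm{Pic}(S)$, with the correct weights and orientation data, is precisely the content of \cite[\S4.3]{hmf-mult}, and your orthogonality step also elides a point --- the character sum vanishes unless $\chi\circ N_{L/K}$ is trivial on the \emph{image of} $\mathrm{Pic}(S)$, which is a priori weaker than triviality on the full norm group and so does not immediately force $M_\chi=L$; making this precise (conductor of $S$, behaviour at primes dividing the level) is exactly what the selectivity theorem packages. Second, the statement covers arbitrary even weight vectors $k=(k_i)$, while Theorem~\ref{thm:dim-formula} is a parallel-weight formula; the paper instead invokes Shimizu's non-parallel-weight formula, which requires $k_i>2$ for some $i$, and must then dispose of the cases $k_i=0$ (via \cite[Lemma I.6.3]{vdg}) and parallel weight $2$ (via \cite[Satz 7.2]{freitag-dim}) separately. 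Your proposal addresses neither the non-parallel weights nor these boundary weights, and the parallel weight~$2$ case is the one actually needed for Remark~\ref{rem:geometric-genus}.
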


\begin{proof}
For the main part of the argument, we use a dimension formula due to
Shimizu \cite[Theorem 11]{shimizu} which first requires us to suppose
that $k_i>2$ for some $i$.  For reasons of space, we do not repeat the
formula here, but we claim that it depends only on the rotation factors
of the elliptic points for $\Gamma_0(I;A)$ or $\hat \Gamma_0(I;A)$.
Indeed, the leading
term (a volume) depends only on the index $[\PSL_2(\O_K):P\Gamma_0(I;A)]$
or $[\PSL_2(\O_K):P\hat\Gamma_0(I;A)]$,
the field $K$ and the weight $k$, and the
final term (cuspidal contribution) is zero as pointed out by Shimizu
\cite[(39)]{shimizu} using that $d$ is odd.  The key term is the
middle term (elliptic points contribution): in Shimizu's notation
\[ 
  \sum_{i=1}^s \frac{1}{[\Gamma_{z_i}:1]} \sum_{\gamma \in \Gamma_{z_i},\gamma \ne 1} \prod_{i=1}^n \frac{{\alpha^{(i)}}^{r_i}}{1-\alpha^{(i)}}
\]
where $z_i$ are a complete set of inequivalent elliptic points with stabilizers $\Gamma_{z_i}$, the $\alpha^{(i)}$ are the rotation factors as in \cite[\S4.3]{hmf-mult}, and $r_i=k_i/2$.  The number of elliptic points of each order
  is the same for the different genera by \cite[Proposition 4.2.3]{hmf-mult},
  which gives a formula independent of $A$.
  Thus it suffices to prove that the rotation
  factors are the same, following \cite[\S 4]{hmf-mult}.  
  %  (which in the notation of the main body of paper, would be for $H_K$ and $H_{K;A}$ (see Notation~\ref{not:hmv}).

For points of order $q=2$, there is nothing to prove, so suppose
$q>2$.  From \cite[Lemma 4.1.1]{hmf-mult}, elliptic points of order
$q>2$ arise from an extension $L=K(\zeta_{2q}) \supset K$ of degree
$2$.  Thus $K \supseteq K^+ = \Q(\zeta_{2q})^+$ and
$[K^+:\Q]=\phi(2q)/2$ divides $d=[K:\Q]$, which is odd.
So $q=p^r$ where
$p$ is odd (and in fact, $p \equiv 3 \pmod{4}$).  Again since $n$ is
odd, there is a prime $\mathfrak{p}$ of $R$ above $p$ with odd
ramification index.  But $p$ is totally ramified in
$\Q(\zeta_{2p^r})$ with even ramification index, so
$\mathfrak{p}$ must ramify in $K$.
  
  Thus the oriented optimal selectivity condition
  \cite[\S4.3, (OOS)]{hmf-mult} fails.  By
  \cite[Definition 4.3.8, Theorem 4.3.11 (a)]{hmf-mult}, all stabilizer orders $S=R[\gamma_i]$ are orientedly genial.  In particular, the sets of rotation factors are the same across components (see the discussion following \cite[Definition 4.3.8]{hmf-mult}) and by
  \cite[Theorem 4.3.11 (c)]{hmf-mult} the multiplicities are the same as well.

We now treat the remaining cases.  If $k_i=0$ for some $i$, then either $k=0$ and $\dim S_k(\Gamma_0(I;A))=1$ (constant functions) independent of $A$; otherwise $\dim S_k(\Gamma_0(I;A))=0$ by van der Geer \cite[Lemma I.6.3]{vdg}.
Finally, we consider the case where $k_i=2$ for all $i$, 
  the most important one for us.  We just showed that the polynomials $P$ such that $P(k)$ is the
  dimension of the space of forms of even parallel weight $k$ for $k > 2$ (see
  \cite[\S1]{freitag-dim}) are the same for all genera.
  The result then follows from \cite[Satz 7.2]{freitag-dim} and the comments after it.
\end{proof}

%    Bibliographies can be prepared with BibTeX using amsplain,
%    amsalpha, or (for "historical" overviews) natbib style.
\bibliographystyle{amsplain}
%    Insert the bibliography data here.

\end{document}